\newtheorem{Thm}{Theorem}[section]\newtheorem*{Thm*}{Theorem}
\newtheorem{Lem}[Thm]{Lemma} 
\newtheorem{Cor}[Thm]{Corollary}
\newtheorem{Prop}[Thm]{Proposition}
\newtheorem{Prop-Def}[Thm]{Proposition-Definition}
\theoremstyle{definition}
\newtheorem{Ex}[Thm]{Example}
\newtheorem{Def}[Thm]{Definition}
\newtheorem{Assu}[Thm]{Assumption}
\newtheorem{Rem}[Thm]{Remark}
\newcommand{\ra}{\rightarrow}
\newcommand{\D}{\mathcal{D}}
\newcommand{\C}{\mathcal{C}}
\newcommand{\SSS}{\mathbb{S}}
\newcommand{\cal}{\mathcal}
\newcommand{\T}{\mathcal T}
\newcommand{\X}{\mathcal X}
\newcommand{\Y}{\mathcal Y}
\newcommand{\U}{\mathcal U}
\newcommand{\V}{\mathcal V}
\newcommand{\bb}{\mathrm{b}}
\newcommand{\Filt}{\mathsf{Filt}}
\newcommand{\h}{{\mathrm H}}
\renewcommand{\H}{{\mathcal H}}
\newcommand{\xra}{\xrightarrow}
\newcommand{\Z}{{\mathbb Z}}
\newcommand{\rev}{\rm rev}
\newcommand{\op}{\oplus}
\newcommand{\bop}{\bigoplus}
\newcommand{\ot}{\otimes}
\newcommand{\hs}{\hspace{-3pt}}
\newcommand{\Hom}{\operatorname{Hom}\nolimits}
\newcommand{\End}{\operatorname{End}\nolimits}
\newcommand{\RHom}{\mathbf{R}\strut\kern-.2em\operatorname{Hom}\nolimits}
\newcommand{\RshHom}{\mathbf{R}\strut\kern-.2em\mathscr{H}\strut\kern-.3em\operatorname{om}\nolimits}
\newcommand{\shHom}{\mathscr{H}\strut\kern-.3em\operatorname{om}\nolimits}
\newcommand{\shEnd}{\mathscr{E}\strut\kern-.3em\operatorname{nd}\nolimits}
\DeclareMathOperator{\moduleCategory}{{\mathsf{mod}}} \renewcommand{\mod}{\moduleCategory}
\DeclareMathOperator{\proj}{\mathsf {proj}}
\DeclareMathOperator{\thick}{\mathsf{thick}}
\DeclareMathOperator{\per}{\mathsf{per}}
\DeclareMathOperator{\add}{\mathsf {add}}
\DeclareMathOperator{\Inj}{\mathsf{inj}}
\DeclareMathOperator{\ind}{\mathsf {ind}}
\DeclareMathOperator{\silt}{\mathsf {silt}}
\DeclareMathOperator{\ctilt}{{\mbox{$(d+1)$}}\mathsf{-ctilt}}
\DeclareMathOperator{\silheart}{\mathsf {silt-heart}}
\DeclareMathOperator{\siltexc}{\mathsf {silt-exc}}
\DeclareMathOperator{\SMCexc}{\mathsf {SMC-exc}}
\DeclareMathOperator{\fullexc}{\mathsf {full-exc}}
\DeclareMathOperator{\maxsim}{\mathsf {max-sim}}
\DeclareMathOperator{\pmaxsim}{\mathsf {max-sim^{+}}}
\DeclareMathOperator{\CYconf}{\mathsf {CY-conf}}
\DeclareMathOperator{\SMC}{\mathsf {SMC}}
\DeclareMathOperator{\SMS}{{\mbox{$d$}}\mathsf {-SMS}}
\numberwithin{equation}{section}
\title[Positive Fuss-Catalan number and simple-minded systems]{Positive Fuss-Catalan numbers and Simple-minded systems \\ in negative Calabi-Yau categories}
\author{Osamu Iyama}
\address{O. Iyama: Graduate School of Mathematical Sciences, University of Tokyo, 3-8-1 Komaba Meguro-ku Tokyo 153-8914, Japan}
\email{iyama@ms.u-tokyo.ac.jp}
\address{O. Iyama: Graduate School of Mathematics, Nagoya University, Chikusa-ku, Nagoya, 464-8602 Japan}
\author{Haibo Jin}
\address{H. Jin: Mathematical Institut, University of Cologne, Weyertal 86-90, 50931 K\"oln, Germany}
\email{hjin@math.uni-koeln.de}
\address{H. Jin: Graduate School of Mathematics, Nagoya University, Chikusa-ku, Nagoya, 464-8602 Japan}
\begin{document}

\begin{abstract}
%We give an 
%For an acyclic quiver $Q$, 
We  establish a bijection between $d$-simple-minded systems ($d$-SMSs)  of $(-d)$-Calabi-Yau cluster category $\cal C_{-d}(H)$
and silting objects of $\D^{\bb}(H)$ contained in $\D^{\le 0}\cap \D^{\ge 1-d}$ for hereditary algebra $H$ of Dynkin type and $d\ge 1$. 
We show that the number of $d$-SMSs in $\cal C_{-d}(H)$ is the positive Fuss-Catalan number 
$C_{d}^{+}(W)$ of the corresponding Weyl group $W$, by applying this bijection and
Buan-Reiten-Thomas' and Zhu's results on
 Fomin-Reading's  generalized cluster complexes. Our results are based on a refined version of silting-$t$-structure correspondence. 
\end{abstract}

\thanks{The first author is supported by JSPS Grant-in-Aid for Scientific Research (B) 16H03923, (C) 18K3209 and (S) 15H05738. The second author was  supported by China Scholarship Council No.201606140033.}

\maketitle

\section{Introduction}

%Derived categories, as an important class of triangulated categories, are ubiquitous in mathematics, ranging from algebraic geometry and representation theory, to  mathematical physics.
%Cluster algebra, introduced by Fomin and Zelevinsky \cite{FZ}, is an  important object appearing in many branches of mathematics. 
Fomin and Zelevinsky  \cite{FZ} showed that cluster algebras of finite type correspond bijectively with finite root systems $\Phi$.
As a generalization of   their combinatorial structure,   Fomin and Reading \cite{FR}  introduced generalized cluster complex $\Delta^{d}(\Phi)$  for each positive integer $d$. It is a simplicial complex whose ground set is the disjoint union of $d$ copies of the set $\Phi^{+}$ of positive roots and the set of negative simple roots,
and studied actively in combinatorics \cite{A,STW}.
It is known that $\Delta^{d}(\Phi)$  is categorified  by 
$(d+1)$-Calabi-Yau ($(d+1)$-CY) cluster categories $\cal C_{d+1}(H)$
\footnote{$(d+1)$-CY cluster categories are usually called $d$-cluster categories in hereditary setting, and $(d+1)$-cluster categories in non-hereditary setting.}
    for the corresponding hereditary algebra $H$ of Dynkin type \cite{Keller05,T}.
% \cite{zhu, }
%have been widely studied. 
%(see for example, \cite{BMRRT,BIRS,GLS,IO}). 
%Their generalizations, $d$-clusters introduced in  \cite{FR} and  introduced in \cite{Am,Guo} are also very active subjects.
%There has  been a lot of interest centered around cluster categories, which categorify the cluster algebras ,  and their generalization, $d$-cluster categories ($d\ge 1$) over the years.
%have been very active subjects over the years. 
The category $\C_{d+1}(H)$ has
  special objects called $(d+1)$-cluster tilting  objects (see Definition \ref{Def:csilt}), which correspond bijectively with maximal simplices  in $\Delta^{d}(\Phi)$ \cite{Z} and with silting objects (see Definition \ref{Def:silting}) contained in some subcategory of $\D^{\rm b}(H)$ \cite{BRT2}. 
Cluster tilting objects also play a key role in Cohen-Macaulay representations \cite{I}.
%(see Theorem \ref{Thm:BRT}).
%$d$-cluster categories form an important class of positive 
 
Recently
there is  increasing interest in negative CY triangulated categories (Definition \ref{Def:CYtri}) (see \cite{CS0,CS,CS3,CSP,HJY, J1,J2,Jo,KYZ}), including  $(-d)$-CY cluster categories $\cal C_{-d}(H)$ (see Section \ref{section:1.1}). These categories often contain special objects called $d$-simple-minded systems (or $d$-SMS) \cite{CS} (see Definition \ref{Def:CY}). 
%introduced  in   is a useful tool to understand these categories and 
SMS  plays a key role in the study of Cohen-Macaulay dg modules \cite{J1}. 

{\small\[\begin{xy}
(30,32)*{\txt{Projective-like objects}},
(-15,20)*\txt{Derived categories},
(30,20)*+[F:<5pt>]{\txt{Silting objects}}="1",
(-15,8)*{\txt{CY  triangulated categories}},
(30,8)*+[F:<5pt>]{\txt{Cluster tilting objects}}="2",
(75,32)*{\txt{Simple-like objects}},
(75,20)*+[F:<5pt>]{\txt{Simple-minded collections}}="3",
(75,8)*+[F:<5pt>]{\txt{Simple-minded  systems}}="4",
%(70,20)*+[F:<5pt>]{\txt{locally\\ representation-finite\\ $k$-linear category}}="2",
\ar@{~>}"1";"2" 
\ar@{~>}"3";"4" 
\end{xy}\]}
In some important cases, cluster tilting objects  and $d$-SMSs are shadows of more fundamental objects, namely, silting  objects (Definition \ref{Def:silting}) and simple-minded collections (SMCs) (see Definition \ref{Def:SMC}). 
%{\new(have reference number to proper definition for silting and SMC)} respectively \cite{KN,IY,J2}.

The aim of this paper is to show that there is a bijection between $d$-SMSs and maximal simplices in $\Delta^{d}(\Phi)$ without negative simple roots. 
%by using a refined version of Koenig-Yang bijection.
 In particular, the number of $d$-SMSs {(Definition \ref{Def:CY})} in $\cal C_{-d}(H)$ is precisely the   positive Fuss-Catalan number. 
 %This is known for the case $d=1$ \cite{CS0} and for the case $Q=A_{n}$ \cite{J1}.
 Our method is based on a refined version of silting-$t$-structure correspondence, which is a bijection between silting objects in perfect derived categories  of a finite-dimensional algebra $A$ and SMCs in bounded derived categories of $A$.  
%Koenig and Yang gave a noted landmark on silting objects and simple-minded collections (SMCs) in \cite{KY}, that is, there is a bijection between 
%which are closely related to derived categories, have been an active subjects these years.
%In derived categories, there are two important classes of objects: projective objects, or more generally, silting objects, and simple objects, or more generally, simple-minded collections (SMCs).
 %Cluster tilting objects, as the variants of projective objects,  have been used to study positive Calabi-Yau (= CY) triangulated categories 
%\cite{BMRRT, IY1, KR, KMV}  and  the categorification of cluster category.
 %\cite{FZ}. 
% In \cite{J1}, the second author introduced negative CY configurations, as the variants of simple objects, to study negative CY categories.

\subsection{Counting $d$-simple-minded systems}\label{section:1.1}
% This paper is motivated by the following problem. 
%We want to know the number of $(-d)$-CY configurations in $\cal C_{-d}(kQ)$.
%The problem above is related to some well-studied number, called Fuss-Catalan number. 
Let $\Phi$ be a simply-laced finite root system, and $W$ the corresponding Weyl group.
Let $\Phi_{+}$ be the set of positive roots and let $\alpha_{i}$ ($i\in I$)  be the simple root.
%Let $W$ be a finite crystallographic reflection group and let $d\ge 1$ be any integer. 
The \emph{Fuss-Catalan number} is 
given by the formula
\[ C_{d}(W):=\prod_{i=1}^{n}\frac{dh+e_{i}+1}{e_{i}+1} \]
where $n$ is the rank of $W$, $h$ is its Coxeter number, and $e_{1},\ldots, e_{n}$ are its exponents (see Figure \ref{figure}). 
Recall that the generalized cluster complex $\Delta^{d}(\Phi)$ is a simplicial complex, whose ground set is 
\[ \Phi_{\ge -1}^{d} = (\Phi_{+}\times \{1, \dots, d\})
\cup \{(-\alpha_{i}, 1) \mid i\in I\},\]
and a simplex is a subset of $\Phi_{\ge -1}^{d}$ satisfying  a compatiblility condition (see \cite[Definition 3.1]{FR}).

%We may associated $W$ with a positive number $C_{d}(W)$, called \emph{Fuss-Catalan number}.
It is well-known that $C_{d}(W)$  equals the number of maximal simplices in $\Delta^{d}(\Phi)$
and also equals the number of $d$-noncrossing partitions for $W$ (see  \cite{A, FR}).
There is  a variant of $C_{d}(W)$, called the \emph{positive Fuss-Catalan number}, denoted by $C_{d}^{+}(W)$ and given by the  formula 
\[ C_{d}^{+}(W):=\prod_{i=1}^{n}\frac{dh+e_{i}-1}{e_{i}+1}, \]
see Figure \ref{figure} for the explicit value.

 Let $k$ be a field and   let $H$ be a hereditary $k$-algebra of Dynkin type.
 % and let G be a weakly admissible group.  We can define $(-d)$-CY configurations in $\Z Q/G$ combinatorially (see \cite[Section 5]{J1}). 
 For an integer $d$,
the \emph{$(-d)$-cluster category  $\cal C_{-d}(H)$} is defined as the orbit category $\cal C_{-d}(H):=\cal D^{\rm b}(H)/\nu[d]$, where $\nu {:= ?\ot_{H}^{\bf L}D H}$ is the Nakayama functor of $\D^{\rm b}(H)$. This is a triangulated category by \cite{Keller05} and has AR quiver $\Z Q/\nu[d]$  for the valued quiver $Q$ of $H$. We denote by $\SMS  \cal C_{-d}(H)$ the set of  isomorphic classes of $d$-SMSs {(see Definition \ref{Def:CY})} in  $\cal C_{-d}(H)$,
  and by  $\maxsim\Delta^{d} (\Phi)$ (resp. $\pmaxsim\Delta^{d}(\Phi)$) the set of maximal simplices (resp. maximal simplices without negative simple roots)  in $\Delta^{d}(\Phi)$.
We will prove the following result.

%{\new Theorem 1.1 should be Theorem A (Theorem ??) with the numbers of the theorem when it appears in the paper together with it's proof. Same for Theorem 1.2}
\begin{Thm}[Theorem \ref{Thm:FC}] \label{Thm:FCA}
Let $H$ be a hereditary $k$-algebra of Dynkin type and let $d\ge 1$.
% (1) acyclic 1:1 (2) Dynkin FC
% bij with d-clusters
\begin{enumerate}[\rm (1)]
 \item There is a bijection
 \[ \SMS \cal C_{-d}(H)\xleftrightarrow{1:1} \pmaxsim\Delta^{d}(\Phi).\]
 %between $(-d)$-Calabi-Yau configurations in $\cal C_{-d}(kQ)$ and positive  parts of the generalized cluster complex  $\Delta^{d}(\Phi)$.
 \item
We have 
$\#\SMS\cal C_{-d}(H)=C_{d}^{+}(W)$,
%the number of $(-d)$-Calabi-Yau configurations in $\cal C_{-d}(kQ)$ is $C_{d}^{+}(W)$, 
where $W$ is the Weyl group of $H$. 
\end{enumerate}
\end{Thm}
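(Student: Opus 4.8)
\emph{Proof plan.} The strategy is to convert $d$-SMSs into silting objects of $\D^{\bb}(H)$ with bounded cohomology, translate those into maximal simplices of the generalized cluster complex via the theorems of Buan--Reiten--Thomas and Zhu, and then invoke the known count of the positive part of that complex. The keystone is the bijection
\[ \SMS \cal C_{-d}(H)\ \xleftrightarrow{1:1}\ \{\,\text{silting objects }T\text{ of }\D^{\bb}(H)\text{ with }T\in\D^{\le 0}\cap\D^{\ge 1-d}\,\}, \]
which I would prove through the refined silting--$t$-structure correspondence: since $H$ is hereditary, $\per H=\D^{\bb}(H)$ and silting objects of $\D^{\bb}(H)$ correspond to simple-minded collections of $\D^{\bb}(H)$, the refinement recording how this exchange acts on cohomological windows. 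One checks that the simple-minded collections lying in $\D^{\le 0}\cap\D^{\ge 1-d}$ are exactly the images, under the orbit functor $\pi\colon\D^{\bb}(H)\to\cal C_{-d}(H)=\D^{\bb}(H)/\nu[d]$, of the $d$-SMSs --- the window of length $d$ reflecting the $(-d)$-Calabi--Yau period --- and that for hereditary $H$ this window corresponds on the silting side to the same window; composing gives the display, and it then remains to identify its right-hand side.

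Hereditariness is used again: every object $X$ of $\D^{\bb}(H)$ decomposes as $\bigoplus_{i\in\Z}\h^i(X)[-i]$, so $X\in\D^{\le 0}\cap\D^{\ge 1-d}$ if and only if $X$ is a finite direct sum of objects $M[j]$ with $M\in\mod H$ and $0\le j\le d-1$. Hence the silting objects appearing in the display are exactly the silting objects of $\D^{\bb}(H)$ with all indecomposable summands in $\bigcup_{j=0}^{d-1}\ind(\mod H)[j]$, i.e. in the fundamental domain $\bigcup_{j=0}^{d-1}\ind(\mod H)[j]\cup\{P_i[d]\mid i\in I\}$ of $\cal C_{d+1}(H)$ with the shifted projectives $P_i[d]$ deleted. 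By \cite{BRT2} the silting objects of $\D^{\bb}(H)$ with all indecomposable summands in that fundamental domain correspond bijectively to the $(d+1)$-cluster tilting objects of $\cal C_{d+1}(H)$, so those with no summand $P_i[d]$ correspond to the $(d+1)$-cluster tilting objects having no summand isomorphic to the class of $P_i[d]$ in $\cal C_{d+1}(H)$; and under Zhu's bijection \cite{Z} the class of $P_i[d]$ is the negative simple root $-\alpha_i$, so these match the maximal simplices of $\Delta^d(\Phi)$ with no negative simple root, that is, $\pmaxsim\Delta^d(\Phi)$. Chaining the bijections proves (1). Assertion (2) then follows from the known enumeration of facets of the positive part of a generalized cluster complex: $\#\pmaxsim\Delta^d(\Phi)=C_d^+(W)$ (Fomin--Reading \cite{FR}; see also \cite{A}).

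The hard part is the keystone bijection, i.e. the refinement of the silting--$t$-structure correspondence that tracks cohomological windows: the bare correspondence is only an unbounded poset isomorphism, and pinning down how the window $\D^{\le 0}\cap\D^{\ge 1-d}$ is transported requires controlling the interplay of the co-$t$-structure attached to a silting object, the $t$-structure attached to a simple-minded collection, and the standard $t$-structure of $\D^{\bb}(H)$, which is where the Dynkin hypothesis genuinely enters. Within the combinatorial reduction, the one point that needs care --- and the reason the clean statement concerns the \emph{positive} complex rather than all of $\Delta^d(\Phi)$ --- is that $\D^{\le 0}\cap\D^{\ge -d}$ generally contains silting objects of $\D^{\bb}(H)$ that are not supported on the fundamental domain (for instance ones with a summand $M[d]$ for $M$ indecomposable non-projective), so the naive version would not be bijective onto $\maxsim\Delta^d(\Phi)$; in the narrower window $\D^{\le 0}\cap\D^{\ge 1-d}$ the available indecomposable summands $\bigcup_{j=0}^{d-1}\ind(\mod H)[j]$ coincide exactly with the non-shifted-projective part of the fundamental domain, so the issue does not arise.
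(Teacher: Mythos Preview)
Your global architecture is exactly the paper's: pass from $d$-SMSs to silting objects in $\D^{[1-d,0]}=\D^{\le 0}\cap\D^{\ge 1-d}$, then via \cite{BRT2} and \cite{Z} to positive maximal simplices, and count by \cite[Proposition~12.4]{FR}. The combinatorial reduction (silting in $\D^{[1-d,0]}$ $\leftrightarrow$ cluster tilting without summands in $(\proj H)[d]$ $\leftrightarrow$ $\pmaxsim\Delta^d(\Phi)$) is correct and matches the paper verbatim.

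The gap is in your keystone step, and it is not merely a detail you have deferred: you have the wrong intermediate window. The SMCs that correspond to $d$-SMSs do \emph{not} lie in $\D^{\le 0}\cap\D^{\ge 1-d}$; they lie in $\D^{\le 0}\cap\nu\D^{\ge 1-d}$ (the paper's $\D^{[-d,0]}_{-}$), and the refined silting--SMC correspondence is
\[
(\silt\D^{\bb}(H))\cap\D^{[1-d,0]}\ \xleftrightarrow{1:1}\ (\SMC\D^{\bb}(H))\cap\bigl(\D^{\le 0}\cap\nu\D^{\ge 1-d}\bigr),
\]
with the Nakayama functor $\nu$ genuinely appearing (relative Serre duality converts a right-orthogonality condition on the silting side into a left-orthogonality condition, which is what produces the $\nu$-twist). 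Your claim that ``for hereditary $H$ this window corresponds on the silting side to the same window'' is false already for $d=1$: SMCs contained in $\mod H$ reduce to the single collection of simple modules (by Corollary~\ref{Cor:d-term} these correspond to $1$-term siltings, i.e.\ $\add H$), whereas silting objects contained in $\mod H$ are the tilting modules, of which there are several. The subcategory $\D^{\le 0}\cap\nu\D^{\ge 1-d}$ is also precisely the fundamental domain of $\C_{-d}(H)$ (its indecomposables are $\bigcup_{j=0}^{d-1}\ind(\mod H)[j]$ together with $\ind(\mod H\setminus\Inj H)[d]$), which is what makes $\pi$ bijective on indecomposables there and allows one to compare SMCs with $d$-SMSs; your window $\D^{[1-d,0]}$ is strictly smaller and is not a fundamental domain, so $\pi$ restricted to it cannot hit every $d$-SMS.

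A secondary correction: the Dynkin hypothesis is not used in the silting--SMC window exchange (that step holds for any Iwanaga--Gorenstein algebra). Where it actually enters is the \emph{surjectivity} of $\pi$ from SMCs in $\D^{[-d,0]}_{-}$ onto $d$-SMSs: lifting a $d$-SMS $S$ to $\D^{\bb}(H)$ via the fundamental domain, one must show $\thick S=\D^{\bb}(H)$, and the paper uses local finiteness of $\D^{\bb}(H)$ (hence Dynkin type) to produce a torsion pair $\D^{\bb}(H)={}^{\perp}(\thick S)\perp\thick S$ and then argue ${}^{\perp}(\thick S)=0$.
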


The result (2) is known for the case $d=1$ by \cite{CS0} and for type $A_{n}$  by \cite{J1}.
Figure \ref{figure} gives us concrete formulas for each Dynkin type.

{\renewcommand{\arraystretch}{1.5} 
\begin{figure}\begin{tabular}{|c||c|c|c|}
\hline 
 $Q$ & $h$ & $e_{1}, \dots, e_{n}$ & 
 %$C^{+}_{d}(W)$ 
 $C_{d}^{+}(W)=\#\SMS  \cal C_{-d}(H)$\\
\hline \hline
$A_{n}$ & $n+1$ & $1,2, \dots, n$ &  $\frac{1}{n+1}\binom{(d+1)n+d-1}{n}$ \\ 
\hline
$B_{n}, C_{n}$ & $2n$ &  $1,3, \dots, 2n-1$ &   $\binom{(d+1)n-1}{n}$ \\ 
\hline
%${\jin C_{n}}$ &{\jin $2n$} & {\jin $1,3, \dots, 2n-1$} &  {\jin $\binom{(d+1)n-1}{n}$} \\ 
%\hline
$D_{n}$ & $2(n-1)$ & $1,3,\dots, 2n-3, n-1$& 
 $\frac{(2d+1)n-2d-2}{n}\binom{(n-1)(d+1)-1}{n-1}$ \\
 \hline
 $E_{6}$ & 12 & $1,4,5,7,8,11$ & $\frac{d(2d+1)(3d+1)(4d+1)(6d+5)(12d+7)}{30}$ \\
 \hline
 $E_{7}$ & $18$ & $1,5,7,9,11,13,17$ & 
$ \frac{d(3d+1)(3d+2)(9d+2)(9d+4)(9d+5)(9d+8)}{280}$\\
 \hline 
 $E_{8}$ & $30$ & $1,7,11,13,17,19,23,29$ & 
 $\frac{d(3d+1)(5d+1)(5d+2)(5d+3)(15d+8)(15d+11)(15d+14)}{1344}$
 \\ \hline
  $F_{4}$ &  $12$ & $1,5,7,11$ & 
 $\frac{d(2d+1)(3d+1)(6d+5)}{2}$
 \\ \hline
 $G_{2}$ & $6$ & $1,5$ & 
 $3d^{2}+2d$
 \\ \hline
\end{tabular}\caption{Positive Fuss-Catalan numbers} \label{figure}
\end{figure}}

%Denote by $\cal H$ the category   of finitely  generated $kQ$-modules.
To prove Theorem \ref{Thm:FC}, we need to introduce some categorical notions.
We define the following subcategories of $\D^{\bb}(H)$ for any $n, m\in \Z$.
\[ \D^{\le n}:=\{ X\in\D^{\rm b}(H)\mid \h^{>n}(X)=0\}, \  \D^{\ge m}:=\{ X\in\D^{\rm b}(H)\mid \h^{<m}(X)=0\}. \]
We have  standard $t$-structures {(see Definition \ref{Def:t-cot})} 
$\D^{\bb}(H)=\D^{\le n}\perp \D^{\ge n+1}$ for any $n\in \Z$. 
%{\new have formal definition for $\D^{\le 0}, \D^{\ge 0}, \D^{\le n}, \D^{\ge n}, \D^{[m,n]}$}

Let $m\le n$. Since we have 
\begin{equation}\label{nu}
\D^{\le m-1}\subset\nu \D^{\le m}\subset\D^{\le m}
 \ \mbox{and}  \   \D^{\ge m+1}\subset \nu^{-1}\D^{\ge m}\subset \D^{\ge m},\end{equation}
we  define three subcategories by
\[ \D^{[m,n]}:=\D^{\le n}\cap \D^{\ge m} \subset  
\D^{[m,n]}_{-}:=\D^{\le n}\cap \nu\D^{\ge m+1} \subset \D_{+}^{[m,n]}:=\D^{\le n}\cap \nu^{-1}\D^{\ge m-1}.  \]
%Then we have 
%\[\D^{[m,n]}_{-}:=\D^{\le n}\cap \D^{\ge m} \subset \D^{[m,n]}  \subset \D_{+}^{[m,n]},\]
%by the fact that 
Then $\D_{+}^{[1-d, 0]}$  and $\D_{-}^{[-d, 0]}$ are the fundamental domains of $\C_{d+1}(H)$  and $\C_{-d}(H)$ respectively. More precisely, the canonical functors $\D_{+}^{[1-d,0]} \rightarrow \C_{d+1}(H)$ and $\D_{-}^{[-d,0]} \rightarrow \C_{-d}(H)$ induce bijections
\begin{equation} \label{equation:ind+} \ind \D_{+}^{[1-d,0]}=\bigcup_{i=0}^{d-1}\ind (\mod H)[i] \bigcup \ind (\proj H)[d]\xlongrightarrow{\simeq} \ind \C_{d+1}(H),    \end{equation}
\begin{equation} \label{equation:ind-}
  \ind\D_{-}^{[-d,0]}=  \bigcup_{i=0}^{d-1}\ind (\mod H)[i] \bigcup \ind(\mod H\backslash \Inj H)[d] \xlongrightarrow{\simeq} \ind\C_{-d}(H) \end{equation}
respectively (see \cite{BMRRT}),  where
 $\ind$ denotes the set of isomorphism classes of indecomposable objects.

% Using cohomology
%\[ \D^{\ge 0}:= \bigcup_{n\ge 0}\cal H[-n]\ast\cal H[-n+1]\ast \cdots \ast \cal H,
%\   \   \D^{\le 0}:=\bigcup_{n\ge 0}\cal H\ast \cal H[1]\ast\cdots \ast \cal H[n]. \]
%For $m\le n$, we consider three subcategories
%Let $\cal D^{[m,n]}:=\cal D^{\ge m}\cap\cal D^{\le n}$ for $m\le n$. 
We denote by $\silt \D^{\rm b}(H)$ (resp. $\SMC \D^{\rm b}(H)$) the set of isomorphic classes of basic silting objects (resp. SMCs (see Definition \ref{Def:SMC})) in   $\D^{\rm b}(H)$. 
The following is a main result of this paper.
%%%%%% \mathsf C 
\begin{Thm}[Theorem \ref{Thm:bij}]\label{Thm:bijA}
Let $H$ be a hereditary $k$-algebra of  Dynkin type and $d\ge 1$. Then there are  bijections
\begin{eqnarray}
(\silt\D^{\rm b}(H))\cap \cal D^{[1-d,0]} 
%\{ \text{silting objects  in $\D^{\le 0}\cap \D^{\ge 1-d}$} \}
&\xleftrightarrow{1:1} &
%\{\text{SMCs in } \D^{\le 0}\cap \nu \D^{\ge 1-d}\}, 
(\SMC\D^{\rm b}(H))\cap \D^{[-d, 0]}_{-}, \label{silttoSMC}
\\
&\xleftrightarrow{1:1}& 
%\{(-d)\text{-CY configurations  in } \cal C_{-d}(kQ)\}, 
\SMS\cal C_{-d}(H), \label{SMCtoSMS}
\end{eqnarray}
where the map \eqref{silttoSMC} is induced by the natural functor $ \D^{\rm b}(H) \xra{\pi}  \cal D^{\rm b}(H)/\nu[d].$
% remark on (1.1)

%\[\xymatrix{ (\silt \cal D^{\rm b}(kQ))\cap \cal D^{[1-d,0]}  \ar@{<->}[r]^<<<<<{1:1} &  \{\text{SMCs in } \D^{\le 0}\cap \nu \D^{\ge 1-d}\} \\  \ar@{<->}[r]^<<<<<{1:1} &   \{(-d)\text{-CY conf. in } \cal C_{-d}(kQ)\}}\]
\end{Thm}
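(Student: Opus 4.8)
The plan is to establish the two bijections separately, using two features of $H$ throughout: it has global dimension $\le 1$, so $\D^{\bb}(H)=\K^{\bb}(\proj H)=\per H$ and the silting/SMC formalism of Koenig--Yang applies; and $\D^{\bb}(H)$ is \emph{formal}, i.e.\ every object is the direct sum of the shifts of its cohomologies, with $\Hom_{\D^{\bb}(H)}(X,Y[k])$ equal to $\Hom_{H}(X,Y)$ for $k=0$, to $\Ext^{1}_{H}(X,Y)$ for $k=1$, and to $0$ otherwise when $X,Y\in\mod H$. In particular every indecomposable of $\D^{\bb}(H)$ is a shifted module, and Serre duality reads $\Hom_{\D^{\bb}(H)}(X,Y)\cong D\Hom_{\D^{\bb}(H)}(Y,\nu X)$. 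Granting the two bijections, Theorem \ref{Thm:bijA} is their composite.

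For \eqref{silttoSMC} I would start from the unrestricted bijection $\silt\D^{\bb}(H)\xleftrightarrow{1:1}\SMC\D^{\bb}(H)$ of Koenig--Yang, $M\mapsto\cal S_{M}$, where $\cal S_{M}$ is the set of simple objects of the length heart $\H_{M}=\{X\mid\Hom_{\D^{\bb}(H)}(M,X[k])=0\text{ for all }k\ne 0\}$, and where $M$ generates a bounded co-$t$-structure compatible with the $t$-structure $(\D^{\le 0}_{M},\D^{\ge 1}_{M})$ whose heart is $\H_{M}$. It then suffices to show
\[ M\in\D^{\le 0}\cap\D^{\ge 1-d}\ \Longleftrightarrow\ \cal S_{M}\subset\D^{\le 0}\cap\nu\D^{\ge 1-d}. \]
The two elementary containment lemmas --- $\D^{\le m}\subseteq\D^{\le 0}_{M}$ when the cohomology of $M$ sits in degrees $\ge m$, and the dual one --- are immediate $\Hom$-vanishing computations (using formality); run in both directions they show, for $M\in\D^{[1-d,0]}$, that the $t$-structure of $\cal S_{M}$ is $d$-intermediate with respect to the standard one (squeezed between it and a shift of it by $d$). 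From this the $\D^{\le 0}$-part of the window on $\cal S_{M}$ is immediate, while the co-aisle bound $M\in\D^{\ge 1-d}$ turns into the $\nu$-twisted bound $\cal S_{M}\subset\nu\D^{\ge 1-d}$ precisely through Serre duality together with the inclusions \eqref{nu}; the converse is the same computation run through the inverse bijection $\cal S\mapsto M_{\cal S}$. This is the refined silting--$t$-structure correspondence advertised in the introduction; over a general finite-dimensional algebra one argues identically with $\nu=-\ot_{A}^{\bf L}DA$ replacing hereditary formality.

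For \eqref{SMCtoSMS}, which is induced by $\pi$, note first that by \eqref{equation:ind-} the functor $\pi$ restricts to a bijection $\ind\D^{[-d,0]}_{-}\xlongrightarrow{\simeq}\ind\C_{-d}(H)$; hence $\pi$ carries a set $\cal S\subset\D^{[-d,0]}_{-}$ bijectively onto a set $\pi\cal S$ in $\C_{-d}(H)$, and I must show $\cal S\in\SMC\D^{\bb}(H)$ iff $\pi\cal S\in\SMS\C_{-d}(H)$. For the morphism axioms, expand for $S,S'\in\cal S$
\[ \Hom_{\C_{-d}(H)}(\pi S,\pi S'[i])=\bigoplus_{j\in\Z}\Hom_{\D^{\bb}(H)}(S,\nu^{j}S'[i+jd]). \]
Since $\cal S\subset\D^{[-d,0]}_{-}$, the inclusions \eqref{nu} and formality force that for $1-d\le i\le 0$ only the summands $j\in\{0,1\}$ survive: the $j=0$ term is $\Hom_{\D^{\bb}(H)}(S,S'[i])$, and the $j=1$ term is $\Hom_{\D^{\bb}(H)}(S,\nu S'[i+d])\cong D\Hom_{\D^{\bb}(H)}(S',S[-i-d])$ by Serre duality and the equivalence $\nu^{-1}$, where $-i-d$ is negative when $i<0$. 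Thus for $1-d\le i\le -1$ both terms vanish because $\cal S$ is an SMC, and for $i=0$ the expression equals $\Hom_{\D^{\bb}(H)}(S,S')$, which is $0$ for $S\ne S'$ and the division ring $\End_{\D^{\bb}(H)}(S)$ for $S=S'$; every step is reversible, so the morphism axioms of an SMC on $\cal S$ and of a $d$-SMS on $\pi\cal S$ match up. For the generating axiom, the correspondence \eqref{silttoSMC} identifies $\D^{[-d,0]}_{-}$ with the objects of $\D^{\bb}(H)$ that are filtered by shifts $S[i]$, $S\in\cal S$, $i$ in a fixed window of $d$ consecutive integers (for the $t$-structure of $\cal S$); pushing such filtrations down $\pi$ and lifting them back along the fundamental domain \eqref{equation:ind-} turns $\thick\cal S=\D^{\bb}(H)$ into the statement that $\pi\cal S$ $d$-generates $\C_{-d}(H)$, and conversely. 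Since $\C_{-d}(H)$ is $(-d)$-Calabi--Yau, this puts $\pi\cal S$ among the $d$-SMSs of Definition \ref{Def:CY}.

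\textbf{Main obstacle.} The crux is the window-matching in \eqref{silttoSMC}: showing in both directions that the co-aisle constraint $M\in\D^{\ge 1-d}$ on the silting side corresponds exactly to the $\nu$-twisted constraint $\cal S_{M}\subset\nu\D^{\ge 1-d}$ on the SMC side requires coordinating the co-$t$-structure of $M$, the $t$-structure of $\cal S_{M}$ and Serre duality, rather than any single vanishing estimate. A secondary delicate point is the generating axiom in \eqref{SMCtoSMS} --- upgrading ``$\thick\cal S=\D^{\bb}(H)$'' to ``$\pi\cal S$ $d$-generates $\C_{-d}(H)$'' --- where the precise form of the fundamental domain \eqref{equation:ind-} and the $d$-intermediateness obtained in \eqref{silttoSMC} are indispensable.
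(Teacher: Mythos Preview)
Your treatment of \eqref{silttoSMC} is correct and coincides with the paper's argument (Corollary \ref{Cor:Gor} via the relative Serre functor), and your Hom-computation for the orthogonality axioms in \eqref{SMCtoSMS} is exactly Lemma \ref{Lem:hom}. The generating axiom, however, is where your sketch breaks down.

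First, the assertion that ``the correspondence \eqref{silttoSMC} identifies $\D^{[-d,0]}_{-}$ with the objects filtered by $\cal S[i]$ for $i$ in a fixed window of $d$ consecutive integers'' is not true. From $\H_{\cal S}\subset\D^{[-d,0]}_{-}$ one only gets $\D^{[-d,0]}_{-}\subset\D_{\cal S}^{[-d,d]}$, a window of length $2d+1$; indeed the paper's own estimate \eqref{mod} places $\mod H$ in $\D_{\cal S}^{[0,d]}$, already a window of length $d+1$. So you cannot simply push down a $\D$-filtration of an object of the fundamental domain and land in $\Filt_{\C}(\pi\cal S,\ldots,\pi\cal S[1-d])$. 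The paper circumvents this by proving instead the \emph{perpendicular} condition $\bigcap_{j=0}^{d-1}{}^{\perp}\pi\cal S[-j]=0$ via the key vanishing Lemma \ref{Lem:perp}, and then invokes \cite[Proposition 2.13]{CSP} (which uses the Dynkin hypothesis) to pass from $(-d)$-CY configuration to $d$-SMS.

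Second, and more seriously, your argument for surjectivity (``and conversely'') is not an argument: you cannot invoke \eqref{silttoSMC} before knowing that the lifted set $\cal S$ is an SMC, and ``lifting filtrations'' from the orbit category $\C_{-d}(H)$ to $\D^{\bb}(H)$ is exactly what fails in general for triangulated orbit categories. The paper's proof of $\thick\cal S=\D^{\bb}(H)$ uses the Dynkin hypothesis in an essential way: since $\D^{\bb}(H)$ is locally finite, $\thick\cal S$ is functorially finite, hence one has a torsion pair $\D^{\bb}(H)={}^{\perp}(\thick\cal S)\perp\thick\cal S$ by \cite[Proposition 2.3]{IY1}; one then shows ${}^{\perp}(\thick\cal S)=0$ by taking an indecomposable $X\in\mod H$ in it and using Lemma \ref{Lem:hom} together with the $d$-SMS property of $\pi\cal S$. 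Your proposal contains no substitute for this step.
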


%{\new The statements on following two lines sound as if there is nothing to prove. Maybe instead of `see Corollary 1.11', you can write `Corollary 1.11 will be used in the proof of this theorem'.}

The bijection \eqref{silttoSMC} holds in a more general setting, that is, it is true for any finite-dimensional Iwanaga-Gorenstein algebras (which will be proved in Corollary \ref{Cor:Gor} and used in the proof of Theorem \ref{Thm:bijA}). 
Notice that, in the recent paper \cite{CSPP}, the  bijection \eqref{SMCtoSMS} was given for the path algebra $kQ$ of any acyclic quiver $Q$.
%Also the right direction map in (1.2) is well-defined for any acyclic quiver $Q$.

Our theorems and  the results in \cite{BRT2, Z} mentioned above  are summarized as follows,
where we denote by $\ctilt \C_{d+1}(H)$ 
 the set of  $(d+1)$-cluster tilting objects in  $\C_{d+1}(H)$.

\[{ \xymatrixcolsep{2.5pc}\xymatrix{ 
%\{\text{maixmal simplices in $\Delta^{d} (\Phi)$}\}
  \SMS\cal C_{-d}(H) \ar@{<->}[rr]_{\sim}^{\text{Thm.} \ref{Thm:bij}} & &(\SMC\D^{\rm b}(H))\cap \D^{[-d,0]}_{-} \ar@{<->}[d]^{\wr}_{\text{Thm.} \ref{Thm:bij}}
\\
% \{\text{positive simplices in $\Delta^{d} (\Phi)$}\}
\pmaxsim \Delta^{d} (\Phi) 
 \ar@{<->}[rr]_{\sim}  \ar@{^{(}->}[d] \ar@{<->}[u]_{\wr}^{\text{Thm.} \ref{Thm:FC}}& & (\silt \D^{\rm b}(H))\cap \D^{[1-d,0]}\ar@{^{(}->}[d]  \\
   \maxsim\Delta^{d} (\Phi) 
\ar@{<->}[r]_(0.5){\sim}^(0.5){\mbox{\footnotesize \cite{Z}}} & \ctilt \C_{d+1}(H)  \ar@{<->}[r]^(0.42){\mbox{\footnotesize \cite{BRT2}}}_(.42){\sim} & (\silt \D^{\rm b}(H)) \cap \D^{[1-d,0]}_{+}
.}}\]

%{\new Example 1.3: State precisely what is this example supposed to illustrate.}
We give an example of Theorem \ref{Thm:bij}. Recall the cluster category $\C_{-d}(H)$ has AR quiver $\Z Q/\nu[d]$ for the valued quiver $Q$ of $H$.
\begin{Ex}
\begin{enumerate}[\rm(1)]
\item
Let $H=kA_{3}$  and $d=1$, then the bijection between silting objects of $\D^{\rm b}(kA_{3})$ contained in $\mod kA_{3}=\D^{[0,0]}$ and $1$-SMS of $\cal C_{-1}(kA_{3})$ is as follows.
%{\new Describe how is the correspondence given.}
\begin{center}
\begin{tikzpicture}
\draw node (1) at (0.5,0){\begin{tikzpicture}[scale=0.18]
\draw node  at (0,0){$\bullet$} node  at (1,1){$\bullet$} node  at (2,2){$\bullet$} node  at (2,0){$\circ$} node  at (4,0){$\circ$} node  at (3,1){$\circ$};\end{tikzpicture}}

node (2) at (1.7,1.4){\begin{tikzpicture}[scale=0.2]  \draw node  at (0,0){$\circ$} node  at (1,1){$\bullet$} node  at (2,2){$\bullet$} node  at (2,0){$\bullet$} node  at (4,0){$\circ$} node  at (3,1){$\circ$};\end{tikzpicture}}

node (3) at (3.9,1.4){\begin{tikzpicture}[scale=0.2]  \draw node  at (0,0){$\circ$} node  at (1,1){$\circ$} node  at (2,2){$\bullet$} node  at (2,0){$\bullet$} node  at (4,0){$\circ$} node  at (3,1){$\bullet$};\end{tikzpicture}}  

node (4) at (5.5,0){\begin{tikzpicture}[scale=0.2]  \draw node  at (0,0){$\circ$} node  at (1,1){$\circ$} node  at (2,2){$\bullet$} node  at (2,0){$\circ$} node  at (4,0){$\bullet$} node  at (3,1){$\bullet$};\end{tikzpicture}}

node (5) at (3,0){\begin{tikzpicture}[scale=0.2]  \draw node  at (0,0){$\bullet$} node  at (1,1){$\circ$} node  at (2,2){$\bullet$} node  at (2,0){$\circ$} node  at (4,0){$\bullet$} node  at (3,1){$\circ$};\end{tikzpicture}}
[->](1)edge(2) (2)edge(3) (3)edge(4) (5)edge(4)  (1)edge(5);

\draw[xshift=7.5cm]
node (1) at (0.5,0){\begin{tikzpicture}[scale=0.2]
\draw node at (0,0){$\bullet$}  node at (2,0){$\bullet$}  node at (4,0){$\bullet$}  node at (6,0){$\bullet$}  node at (1,1){$\circ$}  node at (3,1){$\circ$}  node at (5,1){$\circ$}  node at (0,2){$\circ$}  node at (2,2){$\circ$}  node at (4,2){$\circ$}  node at (6,2){$\circ$}
 [dotted](0,-1)--(0,3) (6,-1)--(6,3);\end{tikzpicture}}

node (2) at (1.7,1.4){\begin{tikzpicture}[scale=0.2]
\draw node at (0,0){$\circ$}  node at (2,0){$\circ$}  node at (4,0){$\bullet$}  node at (6,0){$\circ$}  node at (1,1){$\bullet$}  node at (3,1){$\circ$}  node at (5,1){$\circ$}  node at (0,2){$\circ$}  node at (2,2){$\circ$}  node at (4,2){$\bullet$}  node at (6,2){$\circ$}
[dotted] (0,-1)--(0,3) (6,-1)--(6,3);\end{tikzpicture}}

node (3) at (3.9,1.4){\begin{tikzpicture}[scale=0.2]
\draw node at (0,0){$\circ$}  node at (2,0){$\bullet$}  node at (4,0){$\circ$}  node at (6,0){$\circ$}  node at (1,1){$\circ$}  node at (3,1){$\circ$}  node at (5,1){$\bullet$}  node at (0,2){$\circ$}  node at (2,2){$\bullet$}  node at (4,2){$\circ$}  node at (6,2){$\circ$}
[dotted] (0,-1)--(0,3) (6,-1)--(6,3);\end{tikzpicture}}

node (4) at (5.5,0){\begin{tikzpicture}[scale=0.2]
\draw node at (0,0){$\circ$}  node at (2,0){$\circ$}  node at (4,0){$\circ$}  node at (6,0){$\circ$}  node at (1,1){$\circ$}  node at (3,1){$\circ$}  node at (5,1){$\circ$}  node at (0,2){$\bullet$}  node at (2,2){$\bullet$}  node at (4,2){$\bullet$}  node at (6,2){$\bullet$}
[dotted] (0,-1)--(0,3) (6,-1)--(6,3);\end{tikzpicture}}

node (5) at (3,0){\begin{tikzpicture}[scale=0.2]
\draw node at (0,0){$\bullet$}  node at (2,0){$\circ$}  node at (4,0){$\circ$}  node at (6,0){$\bullet$}  node at (1,1){$\circ$}  node at (3,1){$\bullet$}  node at (5,1){$\circ$}  node at (0,2){$\bullet$}  node at (2,2){$\circ$}  node at (4,2){$\circ$}  node at (6,2){$\bullet$}
[dotted] (0,-1)--(0,3) (6,-1)--(6,3);\end{tikzpicture}}

[->](1)edge(2) (2)edge(3) (3)edge(4) (5)edge(4)  (1)edge(5);

\draw node at (6.5,0.8){$\Longleftrightarrow$};
\end{tikzpicture}
\end{center}
The five diagrams in left (resp. right) part  are the AR quivers of $\mod kA_{3}$ (resp. $\C_{-1}(kA_{3})$), where black vertices show all elements    of $(\silt \cal D^{\rm b}(kA_{3}))\cap \mod kA_{3}$ (resp. $\mbox{$1$-}\mathsf {SMS}\, \cal C_{-1}(kA_{3})$), and the arrows are  given by mutation (see \cite{BY}). 
%Black vertices in left part show all silting objects, which correspond bijectively to $1$-SMSs marked by black vertices in right hand.

\item Let $H=kA_{2}$ and $d=2$. Then the bijection is as follows.
% \begin{tikzpicture}[scale=0.2] \draw node at (0,0){.} node at (2,0){.} node at (4,0){.} node at (6,0){.} node at (1,1){.} node at (3,1){.} node at (5,1){.} node at (7,1){.}[dotted] (0,-1)--(0,2) (7,2)--(7,-1);\end{tikzpicture}
\begin{center}
\begin{tikzpicture}
\draw node (1) at (0,0){ \begin{tikzpicture}[scale=0.2] \draw node at (0,0){$\bullet$} node at (2,0){$\circ$} node at (4,0){$\circ$} node at (1,1){$\bullet$} node at (3,1){$\circ$} node at (5,1){$\circ$};\end{tikzpicture}} 
 
 node (3) at (-1,1.2){ \begin{tikzpicture}[scale=0.2] \draw node at (0,0){$\circ$} node at (2,0){$\bullet$} node at (4,0){$\circ$} node at (1,1){$\bullet$} node at (3,1){$\circ$} node at (5,1){$\circ$};\end{tikzpicture}} 
 
 node (4) at (0,2.4){ \begin{tikzpicture}[scale=0.2] \draw node at (0,0){$\circ$} node at (2,0){$\circ$} node at (4,0){$\circ$} node at (1,1){$\bullet$} node at (3,1){$\circ$} node at (5,1){$\bullet$};\end{tikzpicture}} 

node (2) at (3,0){ \begin{tikzpicture}[scale=0.2] \draw node at (0,0){$\bullet$} node at (2,0){$\circ$} node at (4,0){$\bullet$} node at (1,1){$\circ$} node at (3,1){$\circ$} node at (5,1){$\circ$};\end{tikzpicture}} 

node (5) at (4,1.2){ \begin{tikzpicture}[scale=0.2] \draw node at (0,0){$\circ$} node at (2,0){$\circ$} node at (4,0){$\bullet$} node at (1,1){$\circ$} node at (3,1){$\bullet$} node at (5,1){$\circ$};\end{tikzpicture}} 

node (6) at (3,2.4){ \begin{tikzpicture}[scale=0.2] \draw node at (0,0){$\circ$} node at (2,0){$\circ$} node at (4,0){$\bullet$} node at (1,1){$\circ$} node at (3,1){$\circ$} node at (5,1){$\bullet$};\end{tikzpicture}} 

node (7) at (1.5,1.2){ \begin{tikzpicture}[scale=0.2] \draw node at (0,0){$\circ$} node at (2,0){$\bullet$} node at (4,0){$\circ$} node at (1,1){$\circ$} node at (3,1){$\bullet$} node at (5,1){$\circ$};\end{tikzpicture}} 
[->](1)edge(3) (1)edge(2) (2)edge(5) (5)edge(6) (3)edge(4) (4)edge (6) (3)edge(7) (7)edge (5);

\draw[xshift=7cm]
node (1) at (0,0){\begin{tikzpicture}[scale=0.2]
 \draw node at (0,0){$\bullet$} node at (2,0){$\bullet$} node at (4,0){$\circ$} node at (6,0){$\circ$} node at (1,1){$\circ$} node at (3,1){$\circ$} node at (5,1){$\circ$} node at (7,1){$\bullet$}[dotted] (0,-1)--(0,2) (7,2)--(7,-1);\end{tikzpicture}}
 
 node (3) at (-0.6,1.2){\begin{tikzpicture}[scale=0.2]
 \draw node at (0,0){$\circ$} node at (2,0){$\circ$} node at (4,0){$\circ$} node at (6,0){$\circ$} node at (1,1){$\bullet$} node at (3,1){$\bullet$} node at (5,1){$\circ$} node at (7,1){$\circ$}[dotted] (0,-1)--(0,2) (7,2)--(7,-1);\end{tikzpicture}}
 
 node (4) at (0,2.4){ \begin{tikzpicture}[scale=0.2]
 \draw node at (0,0){$\circ$} node at (2,0){$\circ$} node at (4,0){$\circ$} node at (6,0){$\bullet$} node at (1,1){$\bullet$} node at (3,1){$\circ$} node at (5,1){$\circ$} node at (7,1){$\circ$}[dotted] (0,-1)--(0,2) (7,2)--(7,-1);\end{tikzpicture}}

node (2) at (3.8,0){ \begin{tikzpicture}[scale=0.2]
 \draw node at (0,0){$\bullet$} node at (2,0){$\circ$} node at (4,0){$\circ$} node at (6,0){$\circ$} node at (1,1){$\circ$} node at (3,1){$\circ$} node at (5,1){$\bullet$} node at (7,1){$\bullet$}[dotted] (0,-1)--(0,2) (7,2)--(7,-1);\end{tikzpicture}}
 
 node (5) at (4.4,1.2){ \begin{tikzpicture}[scale=0.2]
 \draw node at (0,0){$\circ$} node at (2,0){$\circ$} node at (4,0){$\circ$} node at (6,0){$\circ$} node at (1,1){$\circ$} node at (3,1){$\bullet$} node at (5,1){$\bullet$} node at (7,1){$\circ$}[dotted] (0,-1)--(0,2) (7,2)--(7,-1);\end{tikzpicture}}
 
 node (6) at (3.8,2.4){ \begin{tikzpicture}[scale=0.2]
 \draw node at (0,0){$\circ$} node at (2,0){$\circ$} node at (4,0){$\bullet$} node at (6,0){$\bullet$} node at (1,1){$\circ$} node at (3,1){$\circ$} node at (5,1){$\circ$} node at (7,1){$\circ$}[dotted] (0,-1)--(0,2) (7,2)--(7,-1);\end{tikzpicture}}
 
 node (7) at (1.9, 1.2){ \begin{tikzpicture}[scale=0.2]
 \draw node at (0,0){$\circ$} node at (2,0){$\bullet$} node at (4.1,0){$\bullet$} node at (6,0){$\circ$} node at (1,1){$\circ$} node at (3,1){$\circ$} node at (5,1){$\circ$} node at (7,1){$\circ$}[dotted] (0,-1)--(0,2) (7,2)--(7,-1);\end{tikzpicture}}
[->](1)edge(3) (1)edge(2) (2)edge(5) (5)edge(6) (3)edge(4) (4)edge (6) (3)edge(7) (7)edge (5);
\draw node at (5,1.2){$\Longleftrightarrow$};
\end{tikzpicture} \end{center}

\end{enumerate}
\end{Ex}

%{\new Recall that \cite{BRT} introduced the notion of 
%$\rm{Hom}_{\le 0}$-configurations {\new (definition?)} (see \cite[Section 2.2]{BRT}), which are similar to SMCs.
%They gave a bijection between silting objects and $\rm{Hom}_{\le 0}$-configurations in \cite[Theorem 2.4]{BRT}. 
%Note that $\rm{Hom}_{\le 0}$-configurations %and the  bijection between silting objects and $d$-$\rm{Hom}_{\le 0}$-configurations is similar to the bijection (1.2). 
%Combining this with our result, we obtain the following.

%\begin{Cor}\label{Cor:SMCA}
%Let $Q$ be an acyclic quiver, then SMCs in $\D^{\rm b}({kQ})$ are precisely $\rm{Hom}_{\le 0}$-configurations  in $\D^{\rm b}({kQ})$.
%\end{Cor}}
Let $H$ be a hereditary $k$-algebra. 
%Let $\D$ be the bounded derived category $\D^{\bb}(H)$. 
Recall that \cite{BRT} introduced the notion of 
\emph{$\Hom_{\le 0}$-configurations} of $\D^{\rm b}(H)$ {(see Definition \ref{Def:BRT})}
and  they gave a bijection between silting objects and $\rm{Hom}_{\le 0}$-configurations in \cite[Theorem 2.4]{BRT}.  %Combining this with our bijection \eqref{silttoSMC}, we obtain 
This notion is similar to SMC, but  quite different in non-hereditary case.
%SMCs are not necessarily $\Hom_{\le 0}$-configurations.
At the end of this paper, we prove the following
\begin{Thm}[Theorem \ref{Thm:appendixA}]\label{Thm:appendix1A}
Let $H$ be a hereditary $k$-algebra. Then $\rm{Hom}_{\le 0}$-configurations of $\D^{\rm b}(H)$ are precisely SMCs  of $\D^{\rm b}({H})$.
\end{Thm}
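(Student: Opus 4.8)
The plan is to prove the two inclusions separately: every $\Hom_{\le 0}$-configuration of $\D^{\bb}(H)$ is an SMC, and conversely. Recall that an SMC is a collection $\{S_1,\dots,S_n\}$ of objects satisfying (i) $\Hom(S_i,S_j[m])=0$ for $m<0$, (ii) $\Hom(S_i,S_j)$ is a division ring for $i=j$ and $0$ for $i\ne j$, and (iii) the $S_i$ generate $\D^{\bb}(H)$ as a thick subcategory; whereas a $\Hom_{\le 0}$-configuration in the sense of \cite{BRT} asks for $\Hom(S_i,S_j[m])=0$ for all $m\le 0$ \emph{except} when $i=j$ and $m=0$, together with the condition that the objects, suitably ordered, form a ``maximal'' such family — equivalently, by \cite[Theorem 2.4]{BRT}, that they arise from a silting object. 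So the content is: in the hereditary Dynkin (or more generally hereditary) case these two notions coincide.

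First I would show a $\Hom_{\le 0}$-configuration is an SMC. Conditions (i) and (ii) for an SMC are immediate from the defining vanishing of a $\Hom_{\le 0}$-configuration, so the only nontrivial point is the generation condition (iii). Here I would invoke the bijection of \cite[Theorem 2.4]{BRT} between silting objects of $\D^{\bb}(H)=\per H$ and $\Hom_{\le 0}$-configurations: a $\Hom_{\le 0}$-configuration $\mathbf S$ corresponds to a silting object $M$, and the correspondence is realized through the bounded $t$-structure / co-$t$-structure pair, so that $\thick\mathbf S=\thick M=\D^{\bb}(H)$. Alternatively, and more self-containedly, one can use the refined silting--$t$-structure correspondence underlying this paper (the bijection between $\silt\per A$ and $\SMC\D^{\bb}(A)$ for finite-dimensional $A$): since a $\Hom_{\le 0}$-configuration has exactly $n$ members with the right endomorphism rings, it is the simple-minded image of the corresponding silting object, hence an SMC.

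For the converse, that an SMC is a $\Hom_{\le 0}$-configuration, the essential extra input is the \emph{positivity/vanishing} $\Hom(S_i,S_j[m])=0$ for $0<m$ as well — wait, more precisely one must upgrade the SMC vanishing $\Hom(S_i,S_j[m])=0$ for $m<0$ together with $\Hom(S_i,S_i)$ a division ring to the full $\Hom(S_i,S_j[m])=0$ for $m\le 0$ whenever $(i,m)\ne(j,0)$, i.e. one must rule out nonzero maps $S_i\to S_j$ for $i\ne j$ \emph{and} negative self-extensions are already excluded; the genuinely new vanishing needed over the bare SMC axioms is that there are no nonzero homomorphisms $S_i\to S_j$ in degree $0$ for distinct $i,j$. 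But that is exactly SMC axiom (ii). So in fact the set-theoretic conditions match up, and what remains is the maximality clause of a $\Hom_{\le 0}$-configuration: every SMC, ordered compatibly with the bounded $t$-structure it generates, is a maximal family with the prescribed Hom-vanishing. This I would again deduce from \cite[Theorem 2.4]{BRT} by exhibiting the silting object associated to the heart of the $t$-structure generated by the SMC (the SMC is the set of simples of a length heart $\H$; hereditariness and Dynkin type — or just hereditariness plus finiteness of the $S_i$ — guarantee $\H$ has enough projectives over a finite-dimensional algebra $B=\End_{\H}(\bigoplus S_i)$, and the tilt gives the silting object), and then reading off that the SMC is precisely the $\Hom_{\le 0}$-configuration attached to that silting object.

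The main obstacle is making precise why in the hereditary case the ``$\le 0$'' vanishing in the definition of a $\Hom_{\le 0}$-configuration is \emph{automatic} given the SMC axioms, whereas in the general (non-hereditary) case it can genuinely fail — this is the discrepancy flagged in the text. The point is that for a simple-minded collection over a hereditary algebra the associated algebra $B=\End(\bigoplus S_i)$ governing the heart is again hereditary (this uses the hereditary hypothesis crucially, through the derived equivalence $\D^{\bb}(H)\simeq\D^{\bb}(B)$ and the fact that a derived equivalence from a hereditary algebra forces the target heart to be derived-hereditary), and for hereditary $B$ one has $\Ext^m_B(S_i,S_j)=0$ for $m\ge 2$, so the only possibly-nonzero maps among the $S_i$ in $\D^{\bb}$ sit in degrees $0$ and $1$; combined with SMC axiom (ii) this yields exactly the $\Hom_{\le 0}$-vanishing pattern. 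I expect the careful handling of this hereditariness-transfer, and of the ordering/maximality bookkeeping in \cite{BRT}, to be where the real work lies; the rest is assembling citations.
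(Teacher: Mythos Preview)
Your proposal has a genuine gap rooted in a misreading of Definition~\ref{Def:BRT}. A $\Hom_{\le 0}$-configuration carries no ``maximality'' clause; the conditions are (H1) each $X_i$ is \emph{exceptional}, (H2) $\Hom(X_i,X_j)=0$ for $i\ne j$, (H3) $\Hom(X,X[t])=0$ for $t<0$, and crucially (H4) there is no cycle $Y_1,\dots,Y_r$ among the summands with $\Hom(Y_k,Y_{k+1}[1])\ne 0$ cyclically. Your argument never addresses (H4), and the exceptionality in (H1) --- which demands $\Hom(S_i,S_i[>\hs0])=0$, not an SMC axiom --- you try to obtain from the claim that the endomorphism algebra $B$ of the heart is hereditary. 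That claim is false: $B$ is only \emph{piecewise} hereditary (tilted algebras are the basic counterexamples), so $\Ext_B^{\ge 2}$ need not vanish and your deduction collapses. One can salvage exceptionality by writing each $S_i$ as a shifted $H$-module and using hereditariness of $H$ for $\Ext^{\ge 2}$ together with the no-loops theorem for $\Ext^1$; but (H4) --- acyclicity of the $\Ext^1$-quiver of the simples of $B$ --- has no such cheap direct argument, and this is where your approach genuinely stalls.

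The paper avoids these difficulties by working through exceptional sequences and the mutation $\mu_{\rev}^{+}$. Theorem~\ref{Thm:appendix2} shows that $\mu_{\rev}^{+}$ bijects silting-exceptional sequences with SMC-exceptional sequences in any suitable $\T$, while \cite[Theorem~2.4]{BRT} says the \emph{same} mutation bijects silting-exceptional sequences with exceptional sequences that are $\Hom_{\le 0}$-configurations; hence the two target sets of ordered sequences coincide. Since every SMC (Proposition~\ref{Prop:appendix}(2)) and every $\Hom_{\le 0}$-configuration (\cite[Lemma~2.3]{BRT}) of $\D^{\rm b}(H)$ can be ordered into an exceptional sequence, forgetting the order yields the theorem. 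Your closing idea --- produce the silting object from the SMC via Koenig--Yang and then ``read off'' its $\Hom_{\le 0}$-configuration via \cite{BRT} --- is in fact this argument in disguise, but making it precise requires showing that the two maps out of silting objects agree, which is exactly the content of Theorem~\ref{Thm:appendix2} together with Lemma~\ref{Lem:appendix}(2).
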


One may also deduce the bijection \eqref{silttoSMC} from   Theorem \ref{Thm:appendix1A} and \cite[Theorem 2.4]{BRT}.

%{\new The proof of this Corollary 1.4 sholud be written here! It appears that this is stated here as interesting observation and not as essential part of the proofs of main Theorems. Is that true?}

\subsection{Silting-$t$-structure correspondence} \label{section:1.2}
%The bijection (1.2) above is similar to the bijection between silting objects and $d$-$\rm{Hom}_{\le 0}$-configurations in \cite{BRT}. 

There is a useful structure in a triangulated category called $t$-structure (see Definition \ref{Def:t-cot}). In the derived category of a finite dimensional algebra, we have the following important bijection between silting objects and certain $t$-structures.
%The   bijection (1.1) is true for any finite-dimensional Iwanaga-Gorenstein algebras (in this case, $\silt\D^{\rm b}(kQ)$ should be replaced by $\silt\per(kQ)$). 
%, which states some bijections between silting objects (or, co-$t$-structures) and SMCs (or, $t$-structures). 
%Let $A$ be a finite-dimensional $k$-algebra. In \cite{KY}, Koenig-Yang showed that there is a  bijection between the silting objects in  perfect category $\per A$ and the SMCs in bounded derived category $\cal D^{\rm b}(A)$.
%Let us recall it first.
%{\new Since this theorem is used in an essential way in the proof, you should describe the correspondence between $\silt \per A$ and $\SMC \D^{\rm b}(A)$.}
\begin{Thm}\cite{KY}\label{Thm:KYbij}
Let $A$ be a finite-dimensional $k$-algebra. Then there are bijections, 
\[\silt \per A \xrightarrow{1:1} \{\text{bounded $t$-structures of $\D^{\rm b}(A)$ with length hearts }\} \xrightarrow{1:1} \SMC \D^{\rm b}(A).\]
where the first map sends $P\in \silt \per A$ to the $t$-structure $\D^{\bb}(A)=P[<\hs 0]^{\perp}\perp P[\ge \hs 0]^{\perp}$, and the second map sends a $t$-structure  to the simple objects in the heart.
%{\jin More precisely, given any silting object $P$ in $\per A$, we have a bounded $t$-structure $\D^{\bb}(A)=P[<\hs 0]^{\perp}\perp P[\ge \hs 0]^{\perp}$. Write $P=\bop_{i=1}^{n}P_{i}$ as direct sum of indecomposable objects, then the SMC $S=\{ S_{1}, \cdots, S_{n}\}$ is determined  by  the formula $\dim \Hom_{\D^{\rm b}(kQ)}(P_{i}, S_{j}[l])=\delta_{ij}\delta_{0l}$ for $1\le i,j \le n$ and $l\in \Z$.}
\end{Thm}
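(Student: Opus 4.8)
The plan is to establish the two maps of the statement separately. Write $\D := \D^{\bb}(A)$ and note that $\per A = \K^{\bb}(\proj A)$ is a full subcategory of $\D$ with $\thick(\per A)=\D$ (since $A$ is finite-dimensional), and that $\D$ is Hom-finite and Krull--Schmidt. For the first map: given $P \in \silt \per A$, the silting condition $\Hom_{\D}(P,P[>0])=0$ says that the dg algebra $B := \RHom_A(P,P)$ has cohomology in degrees $\le 0$, so its standard truncation $\tau^{\le 0}B$ is a sub-dg-algebra concentrated in nonpositive degrees with $\tau^{\le 0}B\hra B$ a quasi-isomorphism and $\h^0(B)=\End_{\per A}(P)$ a finite-dimensional algebra. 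By Keller's dg Morita theory, $\RHom_A(P,-)$ induces triangle equivalences $\per A\xra{\sim}\per B$ and $\D\xra{\sim}\D_{\fd}(B)$ sending $P$ to $B$, and --- $B$ being nonpositive --- $\D_{\fd}(B)$ has its standard $t$-structure with heart the length category $\mod \h^0(B)$. Transporting this $t$-structure back to $\D$ and using that $\Hom_{\D}(P,X[i])$ computes the degree-$i$ cohomology of the image of $X$, one checks it is precisely $(P[<\hs 0]^{\perp},P[\ge\hs 0]^{\perp})$; thus the first map is well defined with values among bounded $t$-structures with length heart, and the simple $\h^0(B)$-modules pull back to objects $S_1,\dots,S_n$ of the heart $\H$ forming a simple-minded collection --- $\Hom_{\D}(S_i,S_j[m])=0$ for $m<0$ as $S_i\in\D^{\le 0}$ and $S_j[m]\in\D^{\ge 1}$, $\End_{\D}(S_i)$ is a division ring, $\Hom_{\D}(S_i,S_j)=0$ for $i\ne j$ by Schur in $\mod \h^0(B)$, and $\thick\{S_i\}=\thick\H=\D$.

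For the second map and the reverse directions: if $(\D^{\le 0},\D^{\ge 0})$ is any bounded $t$-structure with length heart $\H$, then the inclusion $\H\hra\D$ gives $K_0(\H)\cong K_0(\D)\cong K_0(A)$, which is free of rank $n := \#\{\text{simple }A\text{-modules}\}$, and $K_0$ of a length category is free on its simples; hence $\H$ has exactly $n$ simples $S_1,\dots,S_n$, which form an SMC as above --- this is the second map. It is injective: a bounded $t$-structure is recovered from its heart, and a length heart $\H$ from its simples as $\Filt_{\D}\{S_1,\dots,S_n\}$ (one inclusion since every object of $\H$ has a finite composition series, the other since $\H$ is extension-closed in $\D$). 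For surjectivity one takes an arbitrary SMC $\{S_1,\dots,S_n\}$ and lets $\D^{\le 0}$ be the smallest full subcategory of $\D$ containing the $S_i$ and closed under extensions, summands and positive shifts (and $\D^{\ge 0}$ dually), then shows this is a bounded $t$-structure with heart $\Filt_{\D}\{S_i\}$. To invert the first map one reconstructs a silting object: the length heart is equivalent to $\mod C$ for a finite-dimensional $C$, and the perfect complex $P=\bigoplus_j P_j\in\per A$ characterized by $\RHom_{\D}(P_j,S_i)=\delta_{ij}\,k$ concentrated in degree $0$ --- a derived projective cover of $S_j$, built from the $S_i$ by a co-dévissage using $\thick\{S_i\}=\D$ --- is silting and maps back to $(\D^{\le 0},\D^{\ge 0})$; a routine check shows the two passages are mutually inverse.

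The main obstacle is the surjectivity step above: showing that the candidate pair $(\D^{\le 0},\D^{\ge 0})$ attached to an SMC $\{S_i\}$ is genuinely a bounded $t$-structure. Concretely one must produce, for every $X\in\D$, a triangle $X'\to X\to X''$ with $X'\in\D^{\le 0}$ and $X''\in\D^{\ge 1}$ --- equivalently, a \emph{finite} filtration of $X$ with subquotients shifted copies $S_i[r]$ whose shifts decrease along the filtration, which is then cut at degree $0$. Some filtration of $X$ by the $S_i[r]$ exists because $\thick\{S_i\}=\D$; the content is to make it finite and correctly ordered, a dévissage that uses $\Hom_{\D}(S_i,S_j[<0])=0$ to commute negative-shift pieces past positive-shift ones, together with Hom-finiteness and Krull--Schmidt of $\D$ to force termination. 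Ruling out an a priori infinite filtration is exactly where finite-dimensionality of $A$ is used and is the technical heart of the theorem; granted it, the $t$-structure axioms, the identification of the heart and its simples, and the mutual-inverse statements all follow by bookkeeping.
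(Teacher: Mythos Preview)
The paper does not prove this theorem; it is quoted from \cite{KY} (Koenig--Yang) as a background result and invoked without proof throughout. Your outline is essentially the strategy of the original source: realise a silting object $P$ via its nonpositive dg endomorphism algebra $B=\RHom_A(P,P)$, transport the standard $t$-structure on $\D_{\fd}(B)$ back along the derived Morita equivalence, and for the converse direction rebuild the data from an SMC by a d\'evissage argument.

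One genuine issue: your opening claim $\thick(\per A)=\D^{\bb}(A)$ is false unless $A$ has finite global dimension (take $A=k[x]/(x^2)$), and you do rely on it later. In the reconstruction step you produce $P_j$ ``built from the $S_i$ by a co-d\'evissage using $\thick\{S_i\}=\D$'', which only yields $P_j\in\D^{\bb}(A)$; your assertion that $P_j\in\per A$ then tacitly uses $\per A=\D^{\bb}(A)$. Landing the reconstructed object in $\per A$ is exactly the delicate point of the theorem and needs a separate argument --- in \cite{KY} it is handled via the intermediate bijection with bounded co-$t$-structures on $\per A$ (adjacent to the given $t$-structure on $\D^{\bb}(A)$), whose co-heart is $\add P$; alternatively one can appeal to Keller--Nicol\'as. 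Your d\'evissage sketch for the SMC $\Rightarrow$ $t$-structure direction is correct in spirit, and you rightly identify finiteness of the filtration as the crux. A minor further slip: $\RHom_{\D}(P_j,S_i)=\delta_{ij}\,k$ should read $\delta_{ij}\,\End_{\D}(S_i)$, since the endomorphism rings of the $S_i$ need not be $k$.
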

In this subsection, we give  two refined  versions of Theorem \ref{Thm:KYbij}  for triangulated categories,  both of which imply the bijection  \eqref{silttoSMC} above.
Our common assumption is the following, which is satisfied  for $\T=\D^{\rm b}(A)$ and the perfect derived category  $\U=\per A$    for a finite-dimensional $k$-algebra $A$.
%{\new It would be good to know of some other situations where codition 1.6 is satisfied.}

%{\new Assuption 1.6 uses terms `thick subcategory', `$\silt U$' and `$t$-structure'. These terms are defined at the beginning of section 2. All of the definitions from the beginning of section2 should be given before this point.}
\begin{Assu} \label{Assu}
Let $\T$ be a triangulated category and 
  $\U$   a thick subcategory of $\T$ (that is, $\U$ is a triangulated subcategory of $\T$ closed under direct summands). 
 Assume that for any $P\in \silt \U$,   we have a bounded $t$-structure 
 \begin{equation}\T=\T_{ P}^{\le 0}\perp\T_{ P}^{>0}, \text { where $\T_{P}^{\le 0}:=P[<\hs 0]^{\perp}$ and $\T_{P}^{>0}:=P[\ge\hs 0]^{\perp}$}. \label{*}\end{equation}
 See Section \ref{Section:pre} for  the definitions of  $\perp$ and $(\, )^{\perp}$.
\end{Assu}

%{\new It should be stated that for a silting object $P$, the heart of the silting $t$-structure is equal to $\cal H_{P}:=\T_{P}^{\le 0}\cap \T_{P}^{\ge 0}$. This is not new definition of the heart, it is special form of the general definition.}
We call   \eqref{*} the \emph{silting $t$-structure} associated with $P$, and  call its heart $\cal H_{P}:=\T_{P}^{\le 0}\cap \T_{P}^{\ge 0}$  the \emph{silting heart}. 
Then $P$ can be recovered from the subcategory $\cal H_{P}$ (see  Lemma \ref{Lem:poset}).
Denote by $\silheart \T$  the set of silting hearts  of $\T$. Notice that $\silt \U$ and $\silheart \T$ have canonical partial orders (see Section \ref{Section:pre}).
% {\new(Include reference to actual definition number of canonical partial order)}.

 \begin{Thm}[Theorem \ref{Thm:silt}]\label{Thm:siltA}
 Under Assumption \ref{Assu},
let  $\T=\X\perp\X^{\perp}=\Y\perp \Y^{\perp}$
be  two silting $t$-structures.  Then there is a poset isomorphism
\[ 
\{P\in \silt\U\mid P\in \X\cap {}^{\perp}\Y \} \xleftrightarrow{\cong}  \{ \H \in\silheart \T\mid \H \subset \X\cap \Y^{\perp}\}.\]
%(\silt \T^{\rm p})\cap \X\cap {}^{\perp}\Y
%\xleftrightarrow{1:1} 
%(\silheart \T)\cap X\cap \Y^{\perp}.\] 
 \end{Thm}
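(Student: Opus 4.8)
The statement is essentially a relative/interval version of the silting--$t$-structure bijection, so the plan is to transport the correspondence $P \mapsto \H_P$ of Theorem \ref{Thm:KYbij} through the two auxiliary silting $t$-structures $\T = \X \perp \X^\perp = \Y\perp\Y^\perp$ and check that it restricts correctly on both sides. First I would recall from Lemma \ref{Lem:poset} (referenced in the excerpt) that $P$ is recovered from $\H_P$, so the assignment $P \mapsto \H_P$ is injective; the content is to pin down its image when we impose the membership conditions. The key translation is: for a silting object $P$ with silting heart $\H_P$ and associated aisle $\T_P^{\le 0} = P[<0]^\perp$, one has $\H_P \subset \X \cap \Y^\perp$ if and only if $P$ ``sits between'' $\X$ and $\Y$ in the appropriate sense. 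So the first real step is to make precise, and prove, the equivalences
\[
\H_P \subset \X \iff P \in \X, \qquad \H_P \subset \Y^\perp \iff P \in {}^\perp\Y,
\]
using that $\X = \T_{P'}^{\le 0}$, $\Y = \T_{P''}^{\le 0}$ for the silting objects $P', P''$ generating these $t$-structures and that aisles are closed under positive shifts and extensions.

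For the forward implications I would argue as follows. Since $P \in \H_P$ (the silting object lies in its own heart, as the projective generator), $\H_P \subset \X$ forces $P \in \X$; conversely if $P \in \X$ then, because $\X$ is an aisle hence closed under $\T_P^{\le 0}$-truncations and positive shifts, and $\H_P \subset \T_P^{\le 0} = \add\{P[i] : i \le 0\}^{\ast\cdots}$ is built from $P$ by (co)extensions and non-positive shifts within $\T_P^{\le 0}$, one gets $\H_P \subset \X$; the precise mechanism is that $\H_P$ is the smallest extension-closed subcategory... actually more cleanly, $\H_P = \T_P^{\le 0} \cap \T_P^{\ge 0}$ and $\T_P^{\le 0}$ is the smallest aisle containing $P$, so $\X \supseteq P$ with $\X$ an aisle gives $\X \supseteq \T_P^{\le 0} \supseteq \H_P$. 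Dually, ${}^\perp\Y$ is a coaisle-type condition: $\Y^\perp$ is the co-aisle of the $t$-structure given by $\Y$, and $\H_P \subset \Y^\perp$ iff $\Hom(\Y, \H_P) = 0$ iff $\Hom(\Y, \T_P^{\le 0} \cap \T_P^{\ge 0}) = 0$; one reduces this to $\Hom(\Y, P) = 0$, i.e. $P \in \Y^\perp$... but the statement asks for $P \in {}^\perp\Y$, so I must instead use the heart of the $\Y$-$t$-structure: $P \in {}^\perp\Y = {}^\perp(\T_{P''}^{>0})$ means $P \in \T_{P''}^{\le 0}$, equivalently $\H_P \subset \T_{P''}^{\le 0}$... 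I would need to reconcile $\Y = \X$-convention versus ${}^\perp\Y$; re-reading, $\X \cap {}^\perp\Y$ on the left and $\X \cap \Y^\perp$ on the right, so the two $t$-structures are used with opposite variances, and the honest statement to prove is the pair of biconditionals $P \in \X \Leftrightarrow \H_P \subset \X$ and $P \in {}^\perp\Y \Leftrightarrow \H_P \subset \Y^\perp$, where the latter rests on the fact that $\Y^\perp = \T_{P''}^{>0}$ is the coaisle and $\H_P \subset \T_{P''}^{>0}$ iff $\Hom(\T_{P''}^{\le 0}, \H_P)$ controls placement; I would cash this out via the standard fact $\U \subset \T_{P''}^{>0} \iff \Hom(P''[{\le} 0], \U) = 0$ together with $P''[{\le}0]$ generating $\T_{P''}^{\le 0} = {}^\perp\Y$.

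\textbf{The order-isomorphism statement.} Once the image is identified, surjectivity onto $\{\H \in \silheart\T \mid \H \subset \X\cap\Y^\perp\}$ follows from Theorem \ref{Thm:KYbij} (every silting heart is $\H_P$ for a unique $P \in \silt\U$) combined with the just-proved equivalences applied in reverse. For the poset structure I would recall that the partial order on $\silt\U$ is $P \ge Q$ iff $\Hom(P, Q[{>}0]) = 0$, equivalently $\T_P^{\le 0} \supseteq \T_Q^{\le 0}$, and the order on silting hearts is by inclusion of the associated aisles; since $P \mapsto \T_P^{\le 0} \mapsto \H_P$ and $P$ is recovered from $\H_P$, the map is automatically order-preserving and order-reflecting, so it is a poset isomorphism onto its image. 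The one genuine subtlety, and what I expect to be the main obstacle, is verifying carefully that the two-sided bound $\X \cap {}^\perp\Y$ on silting objects corresponds \emph{exactly} (not just one inclusion) to the bound $\X \cap \Y^\perp$ on hearts --- i.e. that no silting object outside $\X$ or outside ${}^\perp\Y$ can nonetheless have its heart trapped in $\X \cap \Y^\perp$, and conversely. This requires using that $P \in \H_P$ crucially (giving $\H_P \subset \X \Rightarrow P \in \X$ for free) and, for the ${}^\perp\Y$ side, that the heart $\H_P$ together with its positive shifts generates $\T_P^{\le 0}$ as an aisle, so that $\H_P \subset \Y^\perp = \T_{P''}^{>0}$ propagates to $\T_P^{\le 0} \subset$ something forcing $P \in {}^\perp\Y$; I would organize this around the description of aisles as being determined by, and determining, their intersection with any fixed heart.
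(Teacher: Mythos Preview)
Your overall strategy---restricting the poset isomorphism $\silt\U \cong \silheart\T$ of Lemma~\ref{Lem:poset} to the interval cut out by $\X$ and $\Y$---is exactly the paper's approach, and the two biconditionals $P \in \X \Leftrightarrow \H_P \subset \X$ and $P \in {}^\perp\Y \Leftrightarrow \H_P \subset \Y^\perp$ that you isolate are indeed the content of the theorem. But the justification you give for them contains a genuine error.

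You assert that $P \in \H_P$ (``the silting object lies in its own heart, as the projective generator'') and rely on this for the implication $\H_P \subset \X \Rightarrow P \in \X$; you repeat at the end that this is used ``crucially''. This claim is false in general: $P \in \H_P$ is equivalent to $\Hom_\T(P, P[<\hs0]) = 0$, i.e.\ to $P$ being \emph{tilting}, and Assumption~\ref{Assu} does not force this. For instance, take $\U = \per B$ and $\T = \D^{\rm b}(B)$ for a proper non-positive dg $k$-algebra $B$ with $H^{-1}(B) \neq 0$; then $B$ is silting but $B \notin \H_B$. What is always true is the weaker statement $P \in \T_P^{\le 0}$, and this suffices: since both $t$-structures are bounded, $\H_P \subset \X$ forces $\T_P^{\le 0} \subset \X$ (Lemma~\ref{Lem:bdd}), whence $P \in \X$. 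Your alternative claim that ``$\T_P^{\le 0}$ is the smallest aisle containing $P$'' is also not literally correct---in $\T = \D^{\rm b}(A)$ for $A$ of infinite global dimension, the suspended subcategory $\Filt(A[\ge\hs0])$ is strictly smaller than $\D^{\le 0}$---though it is true among \emph{silting} aisles, which is all you need.

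The paper organises both biconditionals symmetrically through the partial order (Proposition~\ref{Prop:bij}): writing $\X = \T_Q^{\le 0}$ and $\Y^\perp = \T_R^{\ge 0}$ for the silting objects $Q,R$ that exist by hypothesis, one has
\[
P \in \U_{\le 0}^Q \cap \U_{\ge 0}^R \ \stackrel{\eqref{co-t-order}}{\Longleftrightarrow}\ Q \ge P \ge R \ \stackrel{\text{Lem.~\ref{Lem:poset}}}{\Longleftrightarrow}\ \H_Q \ge \H_P \ge \H_R \ \stackrel{\eqref{t-order}}{\Longleftrightarrow}\ \H_P \subset \T_Q^{\le 0} \cap \T_R^{\ge 0},
\]
together with the identifications $\X \cap \U = \U_{\le 0}^Q$ and ${}^\perp\Y \cap \U = \U_{\ge 0}^R$, which follow from the co-$t$-structure~\eqref{**} on $\U$. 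This route avoids the pitfall above and also cleanly handles the ${}^\perp\Y$ side, which your proposal leaves unresolved.
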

 %{\new Please, write the proof of Theorem 1.7 here!}

For a finite-dimensional $k$-algebra $A$ and $d\ge 1$, we call $P\in \silt \per A$ a  \emph{$d$-term silting},  if  $\Hom_{\cal D^{\rm b}(A)}(A[<\hs0], P)=0=\Hom_{\cal D^{\rm b}(A)}(P, A[\ge\hs d])$.
  For $\T=\D^{\bb}(A)$, by applying Theorem \ref{Thm:silt}
 to  $\X=\D^{\le 0}$ and $\Y=\D^{\le -d}$, we get the corollary below.
%Let us see an example,
% {\new(This sentence should be explained, it probably refers to the next corollary since there is no $d$ in the theorem)}, 
It  is well-known for  the case $d=2$ \cite{BY} and plays an important role in cluster theory.

%{\new The statement of the Corollary is strange. There is an assumption on $P$ but there is no $P$ in the statement. Perhaps, you would like to define $d$-term silting before the corollary using $P$. After that the corollary can be stated without $P$}

\begin{Cor}[Corollary \ref{Cor:d-term}] \label{Cor:d-termA}
  There is a poset isomorphism.
\[ \{\text{$d$-term silting objects in $\per A$}\}\xleftrightarrow{\cong} (\SMC\D^{\rm b}(A))\cap \D^{[1-d, 0]}.
%\{\text{SMCs of $\cal D^{\rm b}(A)$ contained in $\cal D^{[1-d,0]}$} \}.
\]
\end{Cor}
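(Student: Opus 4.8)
\textbf{Proof proposal for Corollary \ref{Cor:d-termA}.}
The plan is to deduce this as a direct special case of Theorem \ref{Thm:siltA} with $\T=\D^{\bb}(A)$ and $\U=\per A$. First I would verify Assumption \ref{Assu} in this setting: for a finite-dimensional $k$-algebra $A$, every $P\in\silt\per A$ gives a bounded $t$-structure $\D^{\bb}(A)=\T_P^{\le 0}\perp\T_P^{>0}$ with $\T_P^{\le 0}=P[<\!0]^{\perp}$ and $\T_P^{>0}=P[\ge\!0]^{\perp}$ — this is exactly the content recalled from \cite{KY} in Theorem \ref{Thm:KYbij}, so it is available. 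Next I would choose the two silting $t$-structures required by Theorem \ref{Thm:siltA} to be $\X=\D^{\le 0}$ and $\Y=\D^{\le -d}$; both are silting $t$-structures since they are the silting $t$-structures associated with $P=A$ and $P=A[-d]$ respectively (more precisely, $\D^{\le 0}=A[<\!0]^{\perp}$ and $\D^{\le -d}=A[-d][<\!0]^{\perp}=A[<\!d]^{\perp}$ inside $\D^{\bb}(A)$).

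With these choices, Theorem \ref{Thm:siltA} produces a poset isomorphism
\[
\{P\in\silt\per A\mid P\in\X\cap{}^{\perp}\Y\}\xleftrightarrow{\ \cong\ }\{\H\in\silheart\D^{\bb}(A)\mid \H\subset\X\cap\Y^{\perp}\}.
\]
The remaining work is to identify both sides with the objects in the statement. For the left-hand side: $P\in\X=\D^{\le 0}$ means $\h^{>0}(P)=0$, equivalently (using that $P$ is perfect, hence its cohomology is determined by the Hom-groups into/out of the $A[i]$) $\Hom_{\D^{\bb}(A)}(A[<\!0],P)=0$; and $P\in{}^{\perp}\Y={}^{\perp}\D^{\le -d}$ unwinds to $\Hom_{\D^{\bb}(A)}(P,\D^{\le -d})=0$, i.e.\ $\Hom_{\D^{\bb}(A)}(P,A[\ge d])=0$ after translating $\D^{\le -d}$ into the relevant shifts of $A$. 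So the left-hand side is exactly the set of $d$-term silting objects as defined just before the corollary. For the right-hand side: $\X\cap\Y^{\perp}=\D^{\le 0}\cap(\D^{\le -d})^{\perp}=\D^{\le 0}\cap\D^{\ge 1-d}=\D^{[1-d,0]}$, using the standard $t$-structure orthogonality $(\D^{\le -d})^{\perp}=\D^{\ge 1-d}$. Finally, via the bijection $\silheart\T\leftrightarrow\SMC\D^{\bb}(A)$ (the restriction of Theorem \ref{Thm:KYbij} sending a heart to its simple objects), silting hearts contained in $\D^{[1-d,0]}$ correspond to SMCs contained in $\D^{[1-d,0]}$, since a bounded heart is contained in an interval of the standard $t$-structure iff its (finitely many) simple objects are; so the right-hand side becomes $(\SMC\D^{\bb}(A))\cap\D^{[1-d,0]}$.

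The main obstacle is not conceptual but bookkeeping: one must carefully match the orthogonality conditions ``$P\in\X\cap{}^{\perp}\Y$'' with the cohomological vanishing defining a $d$-term silting object, keeping track of which shifts of $A$ appear and checking that the ``$P$ perfect'' hypothesis lets one pass between $\Hom$-vanishing and $\h$-vanishing; and likewise one must confirm that ``heart $\subset$ interval'' is equivalent to ``all simples $\in$ interval,'' which uses that a length heart has finitely many simples and that membership in $\D^{[1-d,0]}$ is closed under extensions. Once these identifications are in place, the poset-isomorphism statement is immediate from Theorem \ref{Thm:siltA}, and the agreement of the partial orders is inherited automatically. I would also remark, as the paper does, that the case $d=2$ recovers the familiar correspondence of \cite{BY}, which serves as a sanity check on the indexing conventions.
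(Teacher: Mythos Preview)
Your proposal is correct and follows exactly the paper's approach: apply Theorem \ref{Thm:siltA} with $\T=\D^{\bb}(A)$, $\U=\per A$, $\X=\D^{\le 0}$, $\Y=\D^{\le -d}$, then translate both sides via Theorem \ref{Thm:KYbij}. One small slip in the bookkeeping you flagged: the aisle $\D^{\le -d}$ is the silting $t$-structure for $A[d]$, not $A[-d]$ (indeed $A[d][<\!0]^{\perp}=A[<\!d]^{\perp}=\D^{\le -d}$, whereas $A[-d][<\!0]=A[<\!-d]$); your final identification $A[<\!d]^{\perp}=\D^{\le -d}$ is nonetheless correct.
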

%{\new Please, write the proof of Corollary 1.8 here!}

%Now we give another version of Theorem \ref{Thm:silt}, where two $t$-structures  $\T=\X\perp\X^{\perp}=\Y\perp \Y^{\perp}$ are not assumed  to be  silting. Instead, we need relative Serre functor in the following sense.
When $\T$ has relative Serre functor in the following sense, we can improve Theorem \ref{Thm:silt} by dropping the assumption that two $t$-structures are silting.
\begin{Assu}\label{Assu:main}
Keep Assumption \ref{Assu}. Assume that $\T$ is a $k$-linear triangulated category and  we have a \emph{relative Serre functor $\SSS$}, %{\new (it would be better to use a letter different from $\nu$, which has been used for the Nakayama functor)}. 
that is, there is  an auto-equivalence  $\SSS:\T\simeq \T$ which restricts to  an auto-equivalence $\SSS: \U \simeq \U$, such that there exists a functorial isomorphism  
 \[D\Hom_{\T}(X, Y) \cong \Hom_{\T}(Y, \SSS X),\]
 for any $X\in \U$ and $Y\in \T$, where $D$ is the $k$-dual.
\end{Assu}
Then there is a poset isomorphism  as follows.
\begin{Thm}[Theorem \ref{Thm:KY}]\label{Thm:KYA}
Under Assumption \ref{Assu:main},
let  $\T=\X\perp\X^{\perp}={}^{\perp}\cal Z\perp \cal Z$
be any two  $t$-structures.  There is a poset isomorphism  
\[ 
%(\silt \T^{\rm p})\cap \X\cap \Y
%\xleftrightarrow{1:1} 
%(\silheart \T)\cap X\cap \SSS\Y.\]
\{P\in \silt\U\mid P\in \X\cap \cal Z \} \xleftrightarrow{\cong}  \{ \H\in\silheart \T\mid \H\subset \X\cap \SSS\cal Z\}.\]
\end{Thm}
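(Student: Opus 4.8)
The bijection will be the assignment $P\mapsto\cal H_P$, so the first point to record is that, already under Assumption~\ref{Assu}, this is a poset isomorphism $\silt\U\xrightarrow{\ \sim\ }\silheart\T$: it is surjective by the very definition of $\silheart\T$, injective by Lemma~\ref{Lem:poset}, and an isomorphism of posets for the orders recalled in Section~\ref{Section:pre} by the standard silting--$t$-structure dictionary (the relative version of Theorem~\ref{Thm:KYbij}, which also underlies Theorem~\ref{Thm:siltA}). Consequently it restricts to a poset isomorphism between any two matching subsets, and the theorem reduces to checking, for every $P\in\silt\U$, that $P\in\X\cap\cal Z$ if and only if $\cal H_P\subseteq\X\cap\SSS\cal Z$. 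As both sides are conjunctions of two conditions, I would prove this as the two independent equivalences
\[ P\in\X\iff\cal H_P\subseteq\X\qquad\text{and}\qquad P\in\cal Z\iff\cal H_P\subseteq\SSS\cal Z. \]

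The first equivalence involves a single aisle, and it is the ingredient already needed for Theorem~\ref{Thm:siltA}: for every aisle $\X$ of a $t$-structure $\X\perp\X^{\perp}$ of $\T$ and every $P\in\silt\U$, one has $\cal H_P\subseteq\X\Leftrightarrow P\in\X$. For ``$\Leftarrow$'' only Assumption~\ref{Assu} is used: since $P\in\T_P^{\le 0}$ (this is exactly the rigidity of the silting object $P$), the truncation tower of $P$ for its silting $t$-structure~\eqref{*} is finite with layers $H^{-n}_P(P)[n]$, $n\ge 0$, where $H^{j}_P(-)$ is the $j$-th cohomology for~\eqref{*}; each $H^{-n}_P(P)$ lies in $\cal H_P\subseteq\X$, hence so does the nonnegative shift $H^{-n}_P(P)[n]$, and closure of the aisle under extensions gives $P\in\X$. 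For ``$\Rightarrow$'' I would reuse the argument of Theorem~\ref{Thm:siltA}, noting that it invokes $\X$ only through the aisle axioms, with the relative Serre functor replacing the silting hypothesis on $\X$ used there.

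For the second equivalence the key observation is that it is the \emph{same} aisle statement, read in the opposite category, and that the relative Serre functor is exactly what licenses this. One has $\silt\U=\silt\U^{\mathrm{op}}$, and for such a $P$ the aisle $P[<\hs 0]^{\perp}$ of Assumption~\ref{Assu} formed inside $\T^{\mathrm{op}}$ is the subcategory $\{Y\in\T\mid\Hom_\T(Y,P[j])=0\ \forall\,j>0\}$; by the Serre isomorphism $\Hom_\T(Y,P[j])\cong D\Hom_\T(\SSS^{-1}P,Y[-j])$ (legitimate since $\SSS^{-1}P\in\U$) this equals $\T_{\SSS^{-1}P}^{\ge 0}$, which is a co-aisle of $\T$ and hence an aisle of $\T^{\mathrm{op}}$. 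Thus $(\T^{\mathrm{op}},\U^{\mathrm{op}})$ satisfies Assumption~\ref{Assu:main} with relative Serre functor $\SSS^{-1}$, and the same calculation identifies the heart $\cal H_P^{\mathrm{op}}$ of the silting $t$-structure of $P$ inside $\T^{\mathrm{op}}$ with $\SSS^{-1}\cal H_P$. Since the co-aisle $\cal Z$ of $\T$ is an aisle of $\T^{\mathrm{op}}$, applying the aisle equivalence above \emph{inside $\T^{\mathrm{op}}$} to $P$ and $\cal Z$ yields $\SSS^{-1}\cal H_P\subseteq\cal Z\Leftrightarrow P\in\cal Z$, that is, $\cal H_P\subseteq\SSS\cal Z\Leftrightarrow P\in\cal Z$. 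The poset compatibility of the resulting bijection is then automatic, inherited from the global isomorphism $\silt\U\cong\silheart\T$.

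The step I expect to be the main obstacle is this opposite-category translation, and within it the identity $\cal H_P^{\mathrm{op}}=\SSS^{-1}\cal H_P$ together with the verification of Assumption~\ref{Assu:main} for $(\T^{\mathrm{op}},\U^{\mathrm{op}})$. The delicate point is that the relative Serre isomorphism $D\Hom_\T(X,Y)\cong\Hom_\T(Y,\SSS X)$ is available only with the $\U$-object $X$ in the first argument, whereas in $\T^{\mathrm{op}}$ the object $P$ is systematically forced into the second argument; routing around this (by rewriting $P[j]$ as $\SSS(\SSS^{-1}P[j])$ and tracking the shift) is precisely where $\SSS$ is indispensable --- and it is what ``improves'' Theorem~\ref{Thm:siltA}, whose silting hypotheses on the two $t$-structures are what guarantee the analogous orthogonality and truncation facts there. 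Without $\SSS$, the left orthogonal of $\{P[j]\mid j>0\}$ need not be a co-aisle, so the opposite silting $t$-structure need not even exist.
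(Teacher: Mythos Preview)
Your reduction via Lemma~\ref{Lem:poset} to the two separate equivalences
\[
P\in\X\Longleftrightarrow\H_P\subset\X
\qquad\text{and}\qquad
P\in\cal Z\Longleftrightarrow\H_P\subset\SSS\cal Z
\]
is exactly the paper's opening move, and your truncation-tower argument for $\H_P\subset\X\Rightarrow P\in\X$ is fine.

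There is, however, a gap in the forward implication $P\in\X\Rightarrow\H_P\subset\X$. You appeal to ``the argument of Theorem~\ref{Thm:siltA}'', but that proof goes through Proposition~\ref{Prop:bij} and the equivalence $P\in\U^Q_{\le 0}\Leftrightarrow Q\ge P$ in the silting order; it genuinely needs $\X=\T^{\le 0}_Q$ for some silting $Q$, and does not carry over to an arbitrary aisle. Nor does the Serre functor help here --- part~(a) is Serre-free. The paper fills this step with a double-orthogonal trick that uses only Assumption~\ref{Assu} and the aisle property of $\X$: from $P\in\X$ one has $\U^P_{\le 0}=\Filt(P[\ge\hs0])\subset\X$, hence ${}^{\perp}\bigl((\U^P_{\le 0})^{\perp}\bigr)\subset{}^{\perp}(\X^{\perp})=\X$; but $(\U^P_{\le 0})^{\perp}=P[\ge\hs0]^{\perp}=\T^{>0}_P$ by \eqref{*}, so ${}^{\perp}\bigl((\U^P_{\le 0})^{\perp}\bigr)=\T^{\le 0}_P$, and therefore $\H_P\subset\T^{\le 0}_P\subset\X$.

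For the second equivalence your opposite-category route is correct and conceptually attractive, but it is heavier than the paper's proof. Rather than pass to $\T^{\mathrm{op}}$, re-verify Assumption~\ref{Assu} there, and identify $\H_P^{\mathrm{op}}$ with $\SSS^{-1}\H_P$, the paper reruns the \emph{same} double-orthogonal trick directly in $\T$, inserting one application of Serre duality: since ${}^{\perp}(\SSS\,\U^P_{\ge 0})=(\U^P_{\ge 0})^{\perp}$ (this is precisely where $\SSS$ enters), one gets $\bigl({}^{\perp}(\SSS\,\U^P_{\ge 0})\bigr)^{\perp}=(\T^{<0}_P)^{\perp}=\T^{\ge 0}_P$, and then
\[
P\in\cal Z\Longleftrightarrow\U^P_{\ge 0}\subset\cal Z\Longleftrightarrow\SSS\,\U^P_{\ge 0}\subset\SSS\cal Z\Longleftrightarrow\T^{\ge 0}_P\subset\SSS\cal Z\Longleftrightarrow\H_P\subset\SSS\cal Z,
\]
using $({}^{\perp}\SSS\cal Z)^{\perp}=\SSS\cal Z$ for the third step. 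Your detour isolates a clean duality principle (the co-aisle statement is the aisle statement in $\T^{\mathrm{op}}$), at the price of the bookkeeping you flagged; the paper's argument is shorter and stays inside $\T$.
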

%{\new Please, write the proof of Theorem 1.10 here!}

Now we consider a finite-dimensional $k$-algebra $A$, which is Iwanaga-Gorenstein  (that is, the $A$-module $A$ has finite injective dimension both sides). For $d\ge 1$, we get the corollary below by applying Theorem \ref{Thm:KY} to 
$\T=\D^{\rm b}(A)$, $\U=\per A$, $\X=\D^{\le 0}$ and $\cal Z=\D^{\ge 1-d}$. It plays a key role in proving Theorem \ref{Thm:bij}.

\begin{Cor}[Corollary \ref{Cor:Gor}]\label{Cor:GorA}
 There is a poset isomorphism
\[(\silt \per A)\cap \D^{[1-d, 0]}\xleftrightarrow{\cong} (\SMC \D^{\rm b}(A))\cap \D^{[-d,0]}_{-}.\]
\end{Cor}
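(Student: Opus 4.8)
The plan is to obtain Corollary~\ref{Cor:GorA} as a special case of Theorem~\ref{Thm:KYA}, applied with $\T=\D^{\rm b}(A)$, $\U=\per A$, $\X=\D^{\le 0}$, $\cal Z=\D^{\ge 1-d}$, and with relative Serre functor the Nakayama functor $\SSS=\nu=-\ot_{A}^{\bf L}DA$. First I would verify Assumption~\ref{Assu:main} in this setting. That $\per A$ is thick in $\D^{\rm b}(A)$ and that every $P\in\silt\per A$ gives a silting $t$-structure \eqref{*} is recorded just after Assumption~\ref{Assu} (via \cite{KY}), so the only new point is that $\nu$ is a relative Serre functor. Since $A$ is Iwanaga-Gorenstein, $DA$ has finite projective dimension, so $\nu(\per A)=\thick(\nu A)=\thick(DA)\subseteq\per A$. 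Conversely $DA$ is injective and has all indecomposable injective $A$-modules among its summands, so $\thick(DA)$ contains every module of finite injective dimension, in particular $A$ (again using that $A$ is Iwanaga-Gorenstein); hence $\thick(DA)=\per A$, and $\nu$ restricts to an auto-equivalence of $\per A$. Finally, the functorial isomorphism $D\Hom_{\D^{\rm b}(A)}(X,Y)\cong\Hom_{\D^{\rm b}(A)}(Y,\nu X)$ holds for $X=A$ and all $Y\in\D^{\rm b}(A)$ because $\RHom_{A}(Y,DA)\cong DY$, and extends to all $X\in\per A$ by d\'evissage, both sides being cohomological functors of $X$ closed under shifts and direct summands.

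Next I would match up the subcategories occurring in Theorem~\ref{Thm:KYA}. The standard $t$-structures $\T=\D^{\le 0}\perp\D^{\ge 1}$ and $\T=\D^{\le -d}\perp\D^{\ge 1-d}$ show that $(\X,\X^{\perp})$ and $({}^{\perp}\cal Z,\cal Z)$ are $t$-structures, so the hypotheses of Theorem~\ref{Thm:KYA} are satisfied. By the very definitions of $\D^{[m,n]}$ and $\D^{[m,n]}_{-}$, one has $\X\cap\cal Z=\D^{\le 0}\cap\D^{\ge 1-d}=\D^{[1-d,0]}$ and $\X\cap\SSS\cal Z=\D^{\le 0}\cap\nu\D^{\ge 1-d}=\D^{[-d,0]}_{-}$. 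Theorem~\ref{Thm:KYA} therefore yields a poset isomorphism
\[(\silt\per A)\cap\D^{[1-d,0]}\ \xleftrightarrow{\cong}\ \{\H\in\silheart\D^{\rm b}(A)\mid\H\subseteq\D^{[-d,0]}_{-}\}.\]

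It then remains to pass from silting hearts to SMCs. Sending a silting heart to its set of simple objects, with inverse the extension closure inside $\D^{\rm b}(A)$, is a poset isomorphism $\silheart\D^{\rm b}(A)\cong\SMC\D^{\rm b}(A)$ (this is contained in Theorem~\ref{Thm:KYbij} together with Lemma~\ref{Lem:poset}). Under it, $\H\subseteq\D^{[-d,0]}_{-}$ if and only if the simple objects of $\H$ lie in $\D^{[-d,0]}_{-}$: the forward implication is obvious, and conversely $\D^{[-d,0]}_{-}=\D^{\le 0}\cap\nu\D^{\ge 1-d}$ is the intersection of an aisle with the triangle-equivalence image of a co-aisle, hence closed under extensions, while every object of a silting heart is a finite iterated extension of its simple objects. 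Composing this poset isomorphism with the one displayed above gives Corollary~\ref{Cor:GorA}.

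I expect the main obstacle to be the verification that $\nu$ genuinely is a relative Serre functor for the pair $(\per A\subseteq\D^{\rm b}(A))$, in particular that $\nu$ preserves $\per A$ in both directions; this is precisely where the Iwanaga-Gorenstein hypothesis is used, whereas the identification of the subcategories and the translation between silting hearts and simple-minded collections are formal once Theorem~\ref{Thm:KYA} is available.
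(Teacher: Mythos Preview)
Your proposal is correct and follows essentially the same route as the paper's own proof: apply Theorem~\ref{Thm:KYA} with $\T=\D^{\rm b}(A)$, $\U=\per A$, $\X=\D^{\le 0}$, $\cal Z=\D^{\ge 1-d}$ and $\SSS=\nu$, then translate silting hearts to SMCs via Theorem~\ref{Thm:KYbij}. The paper states in one line that $\nu$ is the relative Serre functor because $A$ is Iwanaga-Gorenstein and then invokes Theorem~\ref{Thm:KYbij} directly; you simply spell out both of these steps (that $\nu$ restricts to an auto-equivalence of $\per A$, and that the containment $\H\subset\D^{[-d,0]}_{-}$ is equivalent to the same containment for the simple objects since $\D^{[-d,0]}_{-}$ is extension-closed), which is fine.
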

%{\new Please, write the proof of Corollary 1.11 here!}

%In \cite{BRT, STW}, the authors studied $\Hom_{\le 0}$ configurations, which is analogue  to SMC. They gave bijections \cite[]  proved the corresponding 

%\begin{Rem}
%In Theorem \ref{Thm:KY}, we do not need the $t$-structures $\T=\X\perp\X^{\perp}={}^{\perp}\Y\perp \Y$ to be silting $t$-structures. If the are given by silting objects,  Theorem \ref{Thm:KY} can be obtained directly from Theorem \ref{Thm:silt}, because in this case we have ${}^{\perp}({}^{\perp}(\SSS\Y))=\Y$.
%\end{Rem}
%%%%  Theorem for global dimension finite case. 

%Let consider an example of Theorem \ref{Thm:KY}. 

%The following table gives the formula of the number of $(-d)$-CY configurations  for Dynkin type.

%%%%%%%%%%%%%%%%%%%%%%
\section{Preliminaries} \label{Section:pre}
%%%%%%%%%%%%%%%%%%%%%%
%\subsection{(co) $t$-structures and Koenig-Yang bijection}
%Let $k$ be a field.
Let $\cal T$ be a   triangulated category. Let $\cal U$ and $\cal V$ be two full subcategories of $\cal T$. We denote by $\add \cal U$ the smallest subcategory containing $\cal U$, which is closed under direct summands and finite direct sums. We denote by $\thick\cal U$ the thick subcategory generated by $\cal U$ (that is, the smallest triangulated subcategory of $\T$ containing $\cal U$ and closed under direct summands).
  We denote by $\Filt\,\cal U$ the smallest extension-closed subcategory of $\T$ containing $\cal U$.
We define  new subcategories 
 \begin{eqnarray*}
 \cal U^{\perp} & :=& \{ X\in\T\mid \Hom_{\T}(U, X)=0 \text{ for any } U\in \cal U\},\\ 
 \cal {}^{\perp}\cal U &:=& \{ Y\in \T\mid \Hom_{\T}(Y, U)=0  \text{ for any } U\in \cal U \}, \\
 \cal U \ast \cal V &:= & \{ X\in \cal T \mid \text{there is a triangle $U\ra X \ra V \ra U[1]$ with $U\in \cal U$ and $V\in\cal V$} \}.
 \end{eqnarray*}
  If $\Hom_{\T}(U,V)=0$ for any $U\in \cal U$ and $V\in\cal V$, we write $\cal U\ast\cal V=\cal U\perp \cal V$.

\begin{Def} \label{Def:t-cot}
 Let $\T, \U, \V$ be as above. 
 \begin{enumerate}[\rm(1)]
\item We call $\cal T=\cal U\perp \cal V$ a \emph{torsion pair} of $\T$, if $\cal T=\cal U\perp \cal V$, $\cal U^{\perp}=\cal V$ and ${}^{\perp}\cal V=\cal U$;
\item We call a torsion pair $\cal T=\cal U\perp\cal V$ a \emph{$t$-structure} of $\T$ if 
$\cal U[1]\subset \cal U$;
\item We call a torsion pair $\cal T=\cal U\perp\cal V$ a \emph{co-$t$-structure} of $\T$  if $\cal V[1]\subset \cal V$.
\end{enumerate}
     \end{Def}
 Let $\T=\U\perp\V$ be a $t$-structure (resp. co-$t$-structure) of $\T$.  We denote by $\cal H=\cal U\cap\cal V[1]$ (resp. $\cal P=\cal U\cap \cal V[-1]$) the \emph{heart} (resp. \emph{co-heart}). 
We say a $t$-structure $\T=\cal U\perp \cal V$ is \emph{bounded}, if $\bigcup_{i\in\Z}\U[i]=\T=\bigcup_{i\in\Z}\cal V[i]$. 
A bounded $t$-structure is determined by its heart.
\begin{Lem}\label{Lem:bdd}
Let $\T=\cal U\perp \cal V$ be a  bounded $t$-structure with heart $\H$. Then $\U=\Filt(\H[\ge \hs0])$ and $\V=\Filt(\H[<\hs 0])$.
\end{Lem}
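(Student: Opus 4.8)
The plan is to deduce both equalities from the cohomological filtration of an object relative to the bounded $t$-structure $\T = \U \perp \V$; throughout I use the standard conventions $\U = \T^{\le 0}$, $\V = \T^{\ge 1}$, so that $\H = \U \cap \V[1] = \T^{\le 0}\cap\T^{\ge 0}$. The inclusions $\Filt(\H[\ge 0]) \subseteq \U$ and $\Filt(\H[<0]) \subseteq \V$ are the easy halves. Both $\U$ and $\V$ are extension-closed: applying $\Hom(-, V)$ for $V \in \V$ (resp. $\Hom(U, -)$ for $U \in \U$) to a triangle and using the torsion-pair identities $\U = {}^{\perp}\V$ and $\V = \U^{\perp}$ shows closure under extensions. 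Now $\H \subseteq \U$ and $\U[1]\subseteq\U$ give $\H[i]\subseteq\U$ for all $i\ge 0$; and $\H\subseteq\V[1]$ together with $\V[-1]\subseteq\V$ (which follows from the torsion-pair axioms, since $\Hom(U, V[-1]) = \Hom(U[1], V) = 0$ for $U\in\U$, $V\in\V$) gives $\H[i]\subseteq\V$ for all $i<0$. Closing under extensions then yields the two inclusions.

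For the reverse inclusions I would use the truncation functors $\tau^{\le n}$, $\tau^{\ge n}$ of the $t$-structure, recalling that $\tau^{\ge i}\tau^{\le i}X \in \H[-i]$. Given $X \in \U = \T^{\le 0}$, boundedness supplies an integer $c$ with $X \in \T^{\ge c}$; then $\tau^{\ge i}\tau^{\le i}X = 0$ unless $c \le i \le 0$, and the finite tower of triangles $\tau^{\le i-1}X \to \tau^{\le i}X \to \tau^{\ge i}\tau^{\le i}X \to (\tau^{\le i-1}X)[1]$ for $i = c, c+1, \dots, 0$ (with $\tau^{\le c-1}X = 0$ and $\tau^{\le 0}X = X$) exhibits $X$ as an iterated extension of the objects $\tau^{\ge i}\tau^{\le i}X \in \H[-i] \subseteq \H[\ge 0]$, so $X \in \Filt(\H[\ge 0])$. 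The argument for $\V$ is identical: for $X \in \V = \T^{\ge 1}$, boundedness gives $d$ with $X \in \T^{\le d}$, so $\tau^{\ge i}\tau^{\le i}X = 0$ unless $1 \le i \le d$, and the same tower realizes $X$ as a finite extension of objects in $\H[-i]\subseteq\H[<0]$, whence $X \in \Filt(\H[<0])$.

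I do not expect a genuine obstacle: this is a standard feature of bounded $t$-structures. The only things requiring care are matching the index conventions for $\tau^{\le n}$, $\tau^{\ge n}$ to the paper's convention $\U[1]\subseteq\U$ (so that the cohomology of an object of $\U$ sits in degrees $\le 0$ and that of an object of $\V$ in degrees $\ge 1$), and observing that it is precisely boundedness of the $t$-structure that forces the cohomological tower to terminate after finitely many steps, keeping the iterated extensions inside $\Filt$.
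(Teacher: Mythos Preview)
Your argument is correct and is the standard one. The paper does not supply a proof of this lemma at all: it is stated as a well-known fact about bounded $t$-structures, so there is nothing to compare against beyond confirming that your truncation-tower argument is exactly the expected justification.
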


On the set of $t$-structures on $\T$, there is a natural partial order defined by
\[  (\U, \cal V)\ge (\U', \cal V'):\Leftrightarrow \U\supset \U' \Leftrightarrow  \cal V\subset\cal V',\]
where $\T=\U\perp \cal V=\U' \perp\cal V'$ are $t$-structures with hearts $\H$ and $\H'$ respectively. It induces a partial order on the set of hearts of bounded $t$-structures by Lemma \ref{Lem:bdd}, that is 
\begin{equation}  \H \ge \H':  \Leftrightarrow   \U \supset \U' \Leftrightarrow \V\subset\V'\Leftrightarrow\Hom_{\T}(\H', \H[<\hs0])=0. \label{t-order}\end{equation}
%For two hearts $\cal H$ and $\cal H'$, we say $\cal H \ge \cal H'$ if $\Hom_{\T}(\cal H', \cal H[<\hs0])=0$, which gives a partial order on the set of hearts.

\begin{Def} \label{Def:silting}
An object $P\in \T$ is called \emph{silting object} if 
$\Hom_{\T}(P, P[>\hs 0])=0$ and $\T=\thick P$.
\end{Def}

Two silting objects $P$ and $Q$ are \emph{equivalent} if $\add P=\add Q$. 
We denote by $\silt \T$ the set of equivalence classes of silting objects in $\T$. 
If $P\in\T$ is silting, then we have a  natural co-$t$-structure 
\begin{equation} 
 \T=\T^{P}_{\ge0}\perp\T^{P}_{<0}, \  \mbox{where $\T^{P}_{\ge 0}:=\Filt (P[\le \hs 0])$ and $\T^{P}_{<0}:=\Filt (P[>\hs 0])$.} \label{**} \end{equation} 
%There is a partial order in $\silt \T$ defined by $P\ge Q\Longleftrightarrow \Hom_{\T}(P, Q[>\hs 0])=0$.
%In this case, we denote by $\T_{P}^{\le 0}:=P[<\hs 0]^{\perp}$ and $\T_{P}^{>0}:=P[\ge\hs 0]^{\perp}$.
We have a partial order on $\silt \T$, that is, for $P, Q\in\silt \T$,
%{\new Here, partial order on $\silt \T$ is defined using co-$t$-structures induced by silting objects. There should be a statement about the relation between the two partial orders on $\silt \T$ (using $t$-structures and using co-$t$-structures)}.
\begin{equation}
Q \ge P : \Leftrightarrow \T^{Q}_{<0}\supset\T_{<0}^{P}
\Leftrightarrow \T^{Q}_{\ge 0} \subset \T^{P}_{\ge 0} \Leftrightarrow \Hom_{\T}(Q, P[>\hs 0])=0. \label{co-t-order}
\end{equation}

Let $k$ be a field in the sequel.

\begin{Def} \label{Def:CYtri}
Let $\T$ be a $k$-linear triangulated category. Let $\SSS:\T\xra{\simeq} \T$ be an  equivalence. 
\begin{enumerate}[\rm (1)]
\item  We call $\SSS$  a \emph{Serre functor} of $\T$, if 
there exists a functorial isomorphism for any $X, Y\in \T$,
\[ D\Hom_{\T}(X,Y) \xra{\simeq} \Hom_{\T}(Y, \SSS X). \]

\item
Let $d\in\Z$. We call $\T$ a \emph{$d$-Calabi-Yau triangulated category}, if $[d]$ gives a Serre functor.
\end{enumerate}
\end{Def}

\begin{Def} \label{Def:csilt}
Let $\T$ be a $k$-linear Hom-finite Krull-Schmidt  triangulated category. For $d\ge 1$,  one object $P\in \T$ is called \emph{cluster tilting}, if the following conditions hold.
\begin{enumerate}[\rm  (1)]
\item $\Hom_{\T}(P, P[j])=0$ for $1\le j\le d-1$.
\item We have $\add P =\{ X\in \T \mid \Hom_{\T}(P, X[j])=0 \mbox{ for } 1\le j\le d-1\}$.
\end{enumerate}
\end{Def}

Next let us recall the notions  of SMC and SMS, which are main study objects in our paper. 

\begin{Def}\label{Def:SMC}
Let $\T$ be a $k$-linear Hom-finite Krull-Schmidt  triangulated category and 
 $S$  a set of objects  of $\T$. We say $S$ is a \emph{simple-minded collection} (or \emph{SMC}), if the following conditions hold.
\begin{enumerate}[\rm(1)]
\item  $\End_{\T}(X)$ is a division $k$-algebra for each $X\in \T$, and $\Hom_{\T}(X,Y)=0$ for $X\not=Y\in S$;
\item $\Hom_{\T}(X[>\hs 0], Y)=0$ for $X, Y\in S$;
\item $\T=\thick S$.
\end{enumerate}
\end{Def}
We denote by $\SMC \T$ the set of SMCs in $\T$. %If $S\in \SMC \T$, we have a standard $t$-structure $\T=\T_{S}^{\le 0}\perp\T_{S}^{>0}$, where $\T_{S}^{\le 0}:=\Filt (S[\ge \hs 0])$ and $\T_{S}^{> 0}:=\Filt (S[<\hs 0])$. For integers $s\le t$, we denote by $\T_{S}^{[s, t]}$ the intersection $\T_{S}^{\le t}\cap \T_{S}^{\ge s}$.
SMCs were first studied by \cite{Ric} in the context of derived categories of symmetric algebras and they were also studied by \cite{Al} under the name `cohomologically Schurian set of generators'.  The name `SMC' was introduced by \cite[Definition 3.2]{KY} in general setting.
%{\new Why are there different assumptions of $\T$ for these two definitions? Is it OK to have not Hom-finite, not Krull-Schmidt for the definition of SMCs?}

\begin{Def}\label{Def:CY}
Let $\T$ be a $k$-linear Hom-finite Krull-Schmidt  triangulated category and  $S$  a set of %indecomposable {\new(`indecomposable' is unnecessary to assume by condition (1)).}
 objects of $\T$. For $d\ge1$, we call $S$ a \emph{$d$-simple-minded system} (or \emph{$d$-SMS} for short),  if the following conditions hold.
 \begin{enumerate} [\rm (1)]
 \item   $\End_{\T}(X)$ is a division $k$-algebra for each $X\in \T$, and $\Hom_{\T}(X,Y)=0$ for $X\not=Y\in S$;
\item $\Hom_{\T}(X[j], Y)=0$ for any two objects $X, Y$ in $ S$ and $1\le j \le d-1$;

\item %For any indecomposable object $M$ in $\T$, there exists $X\in S$ and $0\le j \le d-1$, such that $\Hom_{\T}(X, M[-j])\not=0$.
%We have $\bigcap\limits_{j=0}^{d-1}{}^{\perp}S[-j]=0$.
We have $\T=\Filt (S[-j]\mid 0\le j\le d-1)$.
\end{enumerate}
We call $S$ a \emph{$(-d)$-Calabi-Yau configuration} (\emph{$(-d)$-CY configuration}) if it satisfies (1), (2)  and  $\bigcap\limits_{j=0}^{d-1}{}^{\perp}S[-j]=0$. 
\end{Def}

The notion of SMS was introduced by \cite[Definition 2.1]{Yuming}, and was generalized to $d$-SMS by \cite{CS3}. The term `$(-d)$-CY configuration' studied in \cite{J1} is also referenced to `right $d$-Riedtmann configuration' in \cite{CS} for the same concept.

%{\jin \begin{Def} \label{Def:clustercat}
%Let    $Q$   be a Dynkin quiver and let $d\ge 1$.
 % and let G be a weakly admissible group.  We can define $(-d)$-CY configurations in $\Z Q/G$ combinatorially (see \cite[Section 5]{J1}). 
 %The \emph{$(-d)$ cluster category $\cal C_{-d}(kQ)$} is defined as the orbit category $\cal C_{-d}(kQ):=\cal D^{\rm b}(kQ)/\nu[d]$, where $\nu {:= ?\ot_{k(Q)}^{\bf L}D(kQ)}$ is the Nakayama functor of $\D^{\rm b}(kQ)$. 
% \end{Def}}
 
If $H$ is a hereditary $k$-algebra of  Dynkin type,  then  the notion $(-d)$-CY configuration of $\C_{-d}(H)$ coincides with $d$-SMS by \cite[Proposition 2.13]{CSP}. Because in this case, $\Filt(S)$ is functorially finite in $\C_{-d}(H)$ for any $(-d)$-CY configuration $S$.

Finally, we recall the following definition.
\begin{Def}\cite[Definition 2.2]{BRT2}\label{Def:BRT}
Let $H$ be a hereditary $k$-algebra. 
A basic object $X\in \D^{\rm b}(H)$ is a \emph{$\Hom_{\le 0}$-configuration} if 
\begin{enumerate}[\rm(1)]
\item[(H1)] $X$ is the direct sum of $n$ exceptional indecomposable summands $X_{1}, \dots, X_{n}$, where $n$ is the number of simple modules of $H$.
%satisfying $\Hom_{\D}(X_{i}, X_{i}[1])=0$ for all $i$.
\item[(H2)] $\Hom_{\D}(X_{i},X_{j})=0$ for $i\not=j$.
\item[(H3)] $\Hom_{\D}(X, X[t])=0$ for all $t<0$.
\item[(H4)] There is no subset $\{Y_{1},\dots, Y_{r}\}$ of the indecomposable summands of $X$ such that $\Hom_{\D}(Y_{i}, Y_{i+1}[1])\not=0$ and $\Hom_{\D}(Y_{r}, Y_{1}[1])\not=0$.
\end{enumerate}
\end{Def}

\section{Proof of  main Theorems}
%We first show the following result.
 %%%%%%%%%%%%%%%%%%%%%%%%

\subsection{Silting-$t$-structure correspondence}
 In this subsection, we prove the results of Section \ref{section:1.2}.
We first  show Theorem \ref{Thm:siltA}. 
The following observation is useful.

\begin{Lem}\label{Lem:poset}
Under Assumption \ref{Assu}, there is a poset isomorphism  $\silt\U\xleftrightarrow{\cong} \silheart\T$. 
\end{Lem}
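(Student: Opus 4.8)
The goal is a poset isomorphism $\silt\U \xleftrightarrow{\cong} \silheart\T$ under Assumption \ref{Assu}. The natural candidate for the forward map sends $P \in \silt\U$ to its silting heart $\H_P = \T_P^{\le 0}\cap \T_P^{\ge 0}$ arising from the silting $t$-structure \eqref{*}. First I would check this is well-defined, i.e.\ that $\H_P$ genuinely lies in $\silheart\T$; since $\T = \T_P^{\le 0}\perp\T_P^{>0}$ is by hypothesis a bounded $t$-structure, its heart is an abelian category, and one needs to verify it is a \emph{length} (finite-length) heart, equivalently that the associated $t$-structure comes from a silting object — but by construction it \emph{does}, so membership in $\silheart\T$ is immediate from the definition of that set.

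\textbf{Constructing the inverse.} The key point, already flagged in the text right before the statement (``$P$ can be recovered from the subcategory $\H_P$''), is to describe $P$ intrinsically from $\H_P$. Given a silting heart $\H$, with silting $t$-structure $\T = \U_\H \perp \V_\H$ where $\U_\H = \Filt(\H[\ge 0])$ and $\V_\H = \Filt(\H[<0])$ by Lemma \ref{Lem:bdd}, I would recover the co-$t$-structure. The co-heart should be $\add P$ where $P$ is the corresponding silting object: concretely, $P[\ge 0]^\perp = \T_P^{>0}$ and $P[<0]^\perp = \T_P^{\le 0}$ by \eqref{*}, so $\add P = \T_P^{\ge 0}{}^{\!} \cap {}^{\perp}(\text{shifts})$ — more usefully, $P$ is characterized as (the additive closure of) the Ext-projective objects of $\H$, or equivalently via the co-$t$-structure \eqref{**} whose co-heart is $\add P = \T^P_{\ge 0}\cap\T^P_{\le 0}$. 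So the inverse map sends $\H$ to the co-heart of the unique co-$t$-structure ``compatible'' with the $t$-structure $(\U_\H,\V_\H)$; one must argue such a co-$t$-structure exists and is unique — existence is given by Assumption \ref{Assu} applied to the $P$ we started from, and for a general $\H\in\silheart\T$ one invokes the definition of $\silheart\T$ to get a witnessing $P$, then shows the co-heart depends only on $\H$ and not on the choice of witness. That last independence is where I expect the real work: two silting objects $P, Q$ with $\H_P = \H_Q$ must satisfy $\add P = \add Q$, which one proves by showing $\T_P^{\le 0} = \T_Q^{\le 0}$ forces the bounded co-$t$-structures to coincide, hence the co-hearts, hence $\add P = \add Q$.

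\textbf{Checking bijectivity and order-preservation.} Once both maps are defined, mutual inverseness reduces to: (i) $P \mapsto \H_P \mapsto P$, i.e.\ recovering $P$ up to additive equivalence from $\H_P$, which is exactly the independence statement above; and (ii) $\H \mapsto P_\H \mapsto \H_{P_\H} = \H$, which holds because the silting $t$-structure attached to a witness $P$ of $\H$ has heart $\H$ by the very definition of ``$\H$ is a silting heart.'' For the poset structure, recall \eqref{co-t-order}: $Q \ge P$ iff $\Hom_\T(Q, P[>0]) = 0$ iff $\T^Q_{\ge 0}\subset\T^P_{\ge 0}$, and \eqref{t-order}: $\H \ge \H'$ iff $\U_\H\supset\U_{\H'}$. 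I would show $Q\ge P \iff \T^Q_{\ge 0}\subseteq\T^P_{\ge 0} \iff \T_Q^{\le 0}\supseteq\T_P^{\le 0}$ (linking the co-$t$-structure and $t$-structure orders via the fact that both are determined by the same silting object and the standard adjacency between silting $t$- and co-$t$-structures) $\iff \U_{\H_Q}\supseteq\U_{\H_P} \iff \H_Q\ge\H_P$.

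\textbf{Main obstacle.} The crux is the well-definedness and injectivity of the correspondence, i.e.\ proving that the silting object is genuinely reconstructible from the heart alone, uniformly in $\T$ under only Assumption \ref{Assu} — this is the content of ``$P$ can be recovered from $\H_P$'' and amounts to showing the silting $t$-structure determines the silting co-$t$-structure \eqref{**} and conversely. I expect this to hinge on a compatibility lemma: for a silting $P$, the $t$-structure $(\T_P^{\le 0},\T_P^{>0})$ and the co-$t$-structure $(\T_{\ge 0}^P,\T_{<0}^P)$ are \emph{adjacent}, meaning $\T_{\ge 1}^P \subseteq \T_P^{\le 0} \subseteq \T_{\ge 0}^P$ (and dually), from which $\add P$ is pinned down as $\T_P^{\le 0}\cap\T_P^{\ge 1}{}^{\!}[{-}1]$-type intersection — getting this adjacency cleanly from the bare hypothesis \eqref{*} without extra structure is the delicate part.
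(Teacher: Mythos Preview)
Your approach is correct in principle but takes a substantially harder route than the paper. You try to construct an explicit inverse map $\silheart\T \to \silt\U$ by recovering $\add P$ as the co-heart of an adjacent co-$t$-structure, and you correctly identify that proving this adjacency (and hence the well-definedness of the inverse) from Assumption \ref{Assu} alone is delicate. The paper sidesteps this entirely: it never builds an inverse.

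Instead, the paper observes that surjectivity of $P\mapsto\H_P$ is immediate from the definition of $\silheart\T$, and then proves in one line that the map is an \emph{order-embedding}, i.e.\ $Q\ge P \iff \H_Q\ge\H_P$, via the chain
\[
Q\ge P \ \stackrel{\eqref{co-t-order}}{\Longleftrightarrow}\ P[\ge 0]\subset Q[<0]^{\perp}
\ \stackrel{\eqref{*}}{\Longleftrightarrow}\ P[\ge 0]\subset\T_Q^{\le 0}
\ \stackrel{\eqref{*}}{\Longleftrightarrow}\ \T_Q^{>0}\subset\T_P^{>0}
\ \stackrel{\eqref{t-order}}{\Longleftrightarrow}\ \H_Q\ge\H_P.
\]
A surjective order-embedding between posets is automatically a poset isomorphism: injectivity falls out because $\H_P=\H_Q$ gives $P\ge Q$ and $Q\ge P$, hence $P=Q$ by antisymmetry of the silting order. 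Your ``main obstacle''---reconstructing the co-$t$-structure from the $t$-structure---simply never arises. What your approach would buy, if carried out, is an explicit formula for the inverse; what the paper's approach buys is a two-line proof.
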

\begin{proof}
The map $\silt\U\ra \silheart\T$ is clearly surjective. For $P, Q\in\silt\U$, we have 
%If they give the same silting $t$-structure, then
%$\T_{P}^{\le 0}=\T_{Q}^{\le 0}$, that is
% $P[<\hs 0]^{\perp}=Q[<\hs0]^{\perp}$ and we have 
%$ \U^{P}_{\le 0}=\U\cap P[<\hs 0]^{\perp}= \U\cap Q[<\hs 0]^{\perp}=\U^{Q}_{\le 0}.$
%So $P$ and $Q$ also give the same co-$t$-structure in $\U$ and thus $\add P=\add Q$. Then $P=Q\in \silt\T$. The map $\silt\U\ra \silheart \T$ is injective and thus, bijective.
\[Q\ge P \stackrel{\eqref{co-t-order}}{\Longleftrightarrow} P[\ge \hs0]\subset Q[<\hs0]^{\perp} \stackrel{\eqref{*}}{\Longleftrightarrow} 
P[\ge \hs 0]\subset \T_{Q}^{\le 0}
\stackrel{\eqref{*}}{\Longleftrightarrow}  \T_{Q}^{>0}\subset \T_{P}^{>0}
\stackrel{\eqref{t-order}}{\Longleftrightarrow}\H_{Q}\ge \H_{P}.\]
Thus the map is a poset isomorphism.
\end{proof}

%\begin{Assu}\label{Assu:other}
%Let $\T$ be a triangulated category. Let $\T^{\rm p}$ and $\T^{\rm b}$ be two thick subcategories of $\T$. Assume that 
%\begin{enumerate}[\rm(1)]
%\item For any $P\in \silt \T^{\rm p}$, we have a  silting $t$-structure 
% $(P[<\hs0]^{{\perp}_{\rm b}}, P[>\hs0])^{{\perp}_{\rm b}}$ 
%$(\T_{P}^{\le 0}, \T_{P}^{\ge 0})$ of $\T^{\rm b}$, where $\T_{P}^{\le 0}:=\{ X\in \T^{\rm b}\mid \Hom_{\T}(P[<\hs 0], X)=0\}$ and $\T_{P}^{\ge 0}:=\{ X\in \T^{\rm b}\mid \Hom_{\T}(P[>\hs 0], X)=0\}$,
%\item There is an isomorphism of poset from $\silt \T^{\rm p}$ to $\silheart \T^{\rm b}$.
%\item Let $P, Q, R\in \silt \T^{\rm p}$. The map $\silt\T^{\rm p}\ra \siltt \T^{\rm b}$ is injective.
%Let $\cal H_{P}:=P[<\hs0]^{{\perp}_{\rm b}}\cap P[>\hs0]^{{\perp}_{\rm b}}$ the the heart. 
%Let $\cal H_{P}:=\T^{\ge 0}_{P}\cap \T^{\le 0}_{P}$ be the heart.
%We call it the silting heart of $\T^{\rm b}$ and denote  by $\silheart \T^{\rm b}$ the set of silting heart. 
%\end{enumerate} \end{Assu}

\begin{Prop} \label{Prop:bij}
Under Assumption \ref{Assu}, let $Q,R\in \silt\U$.
%Assume $R\in Q[<\hs0]^{\perp}$. 
Then there is a poset isomorphism 
\[(\silt \U)\cap \U_{\le 0}^{Q}\cap \U_{\ge 0}^{R} \xleftrightarrow{\cong} \{ \H\in \silheart \T \mid 
\H\subset \T_{Q}^{\le 0}\cap \T_{R}^{\ge 0}\}.  \]
%where $\T^{Q}_{\le 0}:=\Filt Q[\ge\hs0]$ and $\T^{R}_{\ge0}:=\Filt R[\le \hs0]$.
\end{Prop}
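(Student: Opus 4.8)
\textbf{Proof proposal for Proposition \ref{Prop:bij}.}

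The plan is to deduce this statement from Theorem \ref{Thm:siltA} (= Theorem \ref{Thm:silt}) by choosing the two silting $t$-structures appropriately and translating between the co-$t$-structure language appearing in the left-hand side and the $t$-structure language of Theorem \ref{Thm:siltA}. Set $\X:=\T_{R}^{\le 0}$ and $\Y:=\T_{Q}^{\le 0}$, the aisles of the silting $t$-structures attached to $R$ and $Q$ via \eqref{*}. First I would verify the two dictionary identities that make the indexing sets coincide: on the silting side, $P\in\U_{\le0}^{Q}$ should be equivalent to $P\in{}^{\perp}\Y$, and $P\in\U_{\ge0}^{R}$ should be equivalent to $P\in\X$. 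The second of these is essentially the content of the chain of equivalences in the proof of Lemma \ref{Lem:poset}: $P\in\U_{\ge 0}^{R}$ means $R\ge P$ in $\silt\U$ (unwinding \eqref{co-t-order} with the roles set so that $P[\ge 0]\subset R[<0]^{\perp}=\T_{R}^{\le0}=\X$, hence in particular $P\in\X$ since $P\in P[\ge0]$), and conversely silting objects lying in the aisle $\X$ are exactly those with $R\ge P$; I should state this as a small lemma or cite the relevant part of Lemma \ref{Lem:poset}. The first identity is the dual statement: $P\in\U_{\le0}^{Q}$ says $P\ge Q$, which via the coaisle/aisle duality translates to $Q[<0]\subset{}^{\perp}(P[\ge0])$, i.e. $P[\ge 0]\subset\T_{Q}^{\le 0}{}^{\perp}$-type condition; the clean reformulation is $P\in{}^{\perp}\Y$ with $\Y=\T_{Q}^{\le 0}$, and again the converse holds because a silting object in ${}^{\perp}\Y$ satisfies $P\ge Q$.

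Once these two translations are in place, the left-hand index set $(\silt\U)\cap\U_{\le0}^{Q}\cap\U_{\ge0}^{R}$ is literally $\{P\in\silt\U\mid P\in\X\cap{}^{\perp}\Y\}$, which is exactly the domain of the poset isomorphism in Theorem \ref{Thm:siltA}. For the right-hand side, Theorem \ref{Thm:siltA} produces $\{\H\in\silheart\T\mid\H\subset\X\cap\Y^{\perp}\}$; here $\X=\T_{R}^{\le0}$ and $\Y^{\perp}=(\T_{Q}^{\le0})^{\perp}=\T_{Q}^{>0}$. So I still need to reconcile $\H\subset\T_{R}^{\le0}\cap\T_{Q}^{>0}$ with the stated condition $\H\subset\T_{Q}^{\le0}\cap\T_{R}^{\ge0}$. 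This is the one genuinely non-formal point, and I expect it to be the main obstacle: I must check that for a silting heart $\H$, containment in $\T_{Q}^{\le0}$ is equivalent to containment in $\T_{Q}^{>0}$ after the appropriate shift bookkeeping, i.e. that the conditions "$\H$ lies below the $Q$-heart" and "$\H$ lies above the $Q$-heart" are being matched correctly via the order \eqref{t-order}. Concretely, $\H\subset\T_{Q}^{\le0}$ combined with boundedness forces, by Lemma \ref{Lem:bdd} applied to the $Q$-$t$-structure, that $\H$ consists of objects with no positive $Q$-cohomology, while $\H\subset\T_{R}^{\ge0}$ dually controls the $R$-side; I would re-examine the exact placement of $[\ge0]$ versus $[>0]$ in \eqref{*} and in the statement of Theorem \ref{Thm:siltA} to confirm the two descriptions of the same set of hearts agree, possibly absorbing a shift. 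If there is a discrepancy it is only a shift convention and is repaired by replacing $R$ by $R[1]$ or reading \eqref{*} with the complementary strict/non-strict inequality.

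Finally, the poset-isomorphism claim itself needs no new work: the bijection is the restriction of the global poset isomorphism $\silt\U\xleftrightarrow{\cong}\silheart\T$ of Lemma \ref{Lem:poset}, and both sides of the asserted correspondence are order-convex (an interval) inside their respective posets — on the left, $\{P\mid R\ge P\ge Q\}$, on the right the corresponding interval of silting hearts — so restricting an order isomorphism to an interval and its image gives an order isomorphism. Thus the proof structure is: (i) prove the two translation lemmas $\U_{\ge0}^{R}\leftrightarrow\X$ and $\U_{\le0}^{Q}\leftrightarrow{}^{\perp}\Y$; (ii) invoke Theorem \ref{Thm:siltA} with $\X=\T_{R}^{\le0}$, $\Y=\T_{Q}^{\le0}$; (iii) reconcile the heart-side conditions $\T_{R}^{\le0}\cap\T_{Q}^{>0}$ versus $\T_{Q}^{\le0}\cap\T_{R}^{\ge0}$ by the shift-bookkeeping above; (iv) conclude that the map is a poset isomorphism from Lemma \ref{Lem:poset}. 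I expect step (iii) to be where most of the care is required, and the rest to be a bookkeeping exercise in the orders \eqref{co-t-order}, \eqref{t-order}.
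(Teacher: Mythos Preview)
Your proposal has two problems, one structural and one technical.

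\textbf{Circularity.} In this paper, Theorem \ref{Thm:silt} is \emph{deduced from} Proposition \ref{Prop:bij}: its proof chooses $Q,R\in\silt\U$ with $\X=\T_Q^{\le0}$ and $\Y=\T_R^{<0}$, identifies $\X\cap\Y^{\perp}=\T_Q^{\le0}\cap\T_R^{\ge0}$ and $\X\cap{}^{\perp}\Y\cap\U=\U_{\le0}^{Q}\cap\U_{\ge0}^{R}$, and then invokes Proposition \ref{Prop:bij}. So you cannot appeal to Theorem \ref{Thm:silt} here without supplying an independent proof of it, which you do not do.

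\textbf{The swap of $Q$ and $R$.} Your translation of the co-$t$-structure conditions is reversed. From \eqref{**} and \eqref{co-t-order}, $P\in\U_{\ge0}^{R}=\Filt(R[\le0])$ is equivalent to $P\ge R$ (not $R\ge P$), and $P\in\U_{\le0}^{Q}=\Filt(Q[\ge0])$ is equivalent to $Q\ge P$. Hence the interval on the left is $\{P\mid Q\ge P\ge R\}$, not $\{P\mid R\ge P\ge Q\}$. Correspondingly the correct choice in Theorem \ref{Thm:silt} is $\X=\T_Q^{\le0}$ and $\Y=\T_R^{<0}$, which gives $\X\cap\Y^{\perp}=\T_Q^{\le0}\cap\T_R^{\ge0}$ on the nose. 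There is no ``shift bookkeeping'' to reconcile; the apparent discrepancy in your step (iii) is entirely an artifact of the swapped roles.

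\textbf{What actually works.} Your final paragraph already contains the right argument, and it is exactly what the paper does: by Lemma \ref{Lem:poset} the map $P\mapsto\H_P$ is a poset isomorphism $\silt\U\cong\silheart\T$, and one simply checks the single chain
\[
P\in\U_{\le0}^{Q}\cap\U_{\ge0}^{R}
\ \stackrel{\eqref{co-t-order}}{\Longleftrightarrow}\ Q\ge P\ge R
\ \stackrel{\text{Lem.~\ref{Lem:poset}}}{\Longleftrightarrow}\ \H_Q\ge\H_P\ge\H_R
\ \stackrel{\eqref{t-order}}{\Longleftrightarrow}\ \H_P\subset\T_Q^{\le0}\cap\T_R^{\ge0}.
\]
That is the whole proof; the detour through Theorem \ref{Thm:silt} is unnecessary (and, as written, circular).
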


\begin{proof}
Let $P\in\silt\U$. Then
\[  P\in \U^{Q}_{\le 0} \cap \U_{\ge 0}^{R} {\stackrel{\eqref{co-t-order}}{\Longleftrightarrow}} Q\ge P\ge R \stackrel{\rm Lem.\, \ref{Lem:poset}}{\Longleftrightarrow}   \cal H_{Q}\ge \cal H_{P} \ge \cal H_{R} \stackrel{\eqref{t-order}}{\Longleftrightarrow}    \cal H_{P}\subset \T_{Q}^{\le 0}\cap \T_{R}^{\ge 0}.
\]
%\[\xymatrix{ Q\ge P\ge R \ar@{<=>}[r]^<<<<<<{(a)}  \ar@{<=>}[d]_{(b)}& \cal H_{Q}\ge \cal H_{P} \ge \cal H_{R} \ar@{<=>}[d]^{(c)}\\ P\in \T^{Q}_{\le 0} \cap \T_{\ge 0}^{R} 
%\ar@{<=>}[r] & \cal H_{P}\subset \T_{Q}^{\le 0}\cap \T_{R}^{\ge 0} } \] 
 Thus  the assertion holds.
\end{proof}

Now we are ready to prove Theorem \ref{Thm:siltA}, which is stated as follows.
\begin{Thm}[Theorem \ref{Thm:siltA}] \label{Thm:silt}
 Under Assumption \ref{Assu},
let  $\T=\X\perp\X^{\perp}=\Y\perp \Y^{\perp}$
be  two silting $t$-structures.  Then there is a poset isomorphism
\[ 
\{P\in \silt\U\mid P\in \X\cap {}^{\perp}\Y \} \xleftrightarrow{\cong}  \{ \H \in\silheart \T\mid \H \subset \X\cap \Y^{\perp}\}.\]
%(\silt \T^{\rm p})\cap \X\cap {}^{\perp}\Y
%\xleftrightarrow{1:1} 
%(\silheart \T)\cap X\cap \Y^{\perp}.\] 
 \end{Thm}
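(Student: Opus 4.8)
The plan is to deduce Theorem~\ref{Thm:silt} from Proposition~\ref{Prop:bij} by identifying the silting $t$-structures $\X$ and $\Y$ with the silting $t$-structures attached to suitable silting objects. First I would use the hypothesis that $\T=\X\perp\X^{\perp}$ is a silting $t$-structure: by definition this means $\X=\T_{Q}^{\le 0}$ and $\X^{\perp}=\T_{Q}^{>0}$ for some $Q\in\silt\U$, so that $\X=Q[<\hs 0]^{\perp}$. Similarly $\Y=\T_{R}^{\le 0}=R[<\hs 0]^{\perp}$ and $\Y^{\perp}=\T_{R}^{>0}=R[\ge\hs 0]^{\perp}$ for some $R\in\silt\U$. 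The goal is then to rewrite the two membership conditions in the statement in the language of Proposition~\ref{Prop:bij}.

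For the left-hand side, I would show that $P\in\X\cap{}^{\perp}\Y$ is equivalent to $P\in\U^{Q}_{\le 0}\cap\U^{R}_{\ge 0}$. The first condition $P\in\X=Q[<\hs 0]^{\perp}$ says $\Hom_{\T}(Q[<\hs 0],P)=0$, i.e. $\Hom_{\T}(Q,P[>\hs 0])=0$, which by \eqref{co-t-order} means $Q\ge P$, equivalently $P\in\U^{Q}_{\le 0}$ in the notation of Proposition~\ref{Prop:bij}. The second condition $P\in{}^{\perp}\Y={}^{\perp}(R[<\hs 0]^{\perp})$ requires more care: I would argue that for $P$ silting, $P\in{}^{\perp}(\T_{R}^{>0}[-1])$ — or directly that $P\in{}^{\perp}\Y$ forces $\Hom_{\T}(P[>\hs 0],R)=0$ — hence $R\ge P$ reversed, i.e. $P\ge R$, so $P\in\U^{R}_{\ge 0}$. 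Here I expect to need the co-$t$-structure \eqref{**} associated with $P$ together with the adjunction-style manipulations relating ${}^{\perp}(\,\cdot\,)$ across the $t$-structure/co-$t$-structure pair. For the right-hand side, the condition $\H\subset\X\cap\Y^{\perp}=\T_{Q}^{\le 0}\cap\T_{R}^{>0}$ is essentially already the condition appearing in Proposition~\ref{Prop:bij} up to matching $\T_{R}^{\ge 0}$ with $\T_{R}^{>0}$; since $\H$ is the heart of a bounded $t$-structure and $R$-silting, one has $\T_{R}^{\ge 0}=\T_{R}^{>-1}$ and a degree-shift bookkeeping shows the two descriptions agree for hearts.

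The hardest step will be the careful translation of the one-sided orthogonal ${}^{\perp}\Y$ into a statement about the co-$t$-structure order on $\silt\U$: unlike the plain orthogonal $\X=Q[<\hs 0]^{\perp}$, which unwinds directly via \eqref{co-t-order}, the condition $P\in{}^{\perp}\Y$ involves an orthogonality on the \emph{wrong} side relative to the $t$-structure, and one must invoke that $P$ is silting (so it is compatible with its own co-$t$-structure) to convert it back to a Hom-vanishing of the form $\Hom_{\T}(P,R[>\hs 0])=0$ or $\Hom_{\T}(P[>\hs 0],R)=0$. Once both sides are rewritten, the theorem follows immediately by applying Proposition~\ref{Prop:bij} with this choice of $Q$ and $R$, and the poset-isomorphism claim is inherited verbatim from that proposition since the identifications above are order-preserving.
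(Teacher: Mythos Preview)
Your overall strategy—reduce to Proposition~\ref{Prop:bij} by choosing $Q,R\in\silt\U$ realising $\X$ and $\Y$ as silting $t$-structures—is exactly the paper's approach. The difficulty you flag as ``the hardest step'' evaporates once you make the paper's index choice: write $\Y=R[\le\hs 0]^{\perp}$ (that is, $\Y=\T_R^{\le -1}$) rather than $\Y=R[<\hs 0]^{\perp}$. Since silting objects are closed under shift this just replaces your $R$ by $R[-1]$. With this $R$ one has $\Y^{\perp}=\T_R^{\ge 0}$ on the nose, so the heart condition matches Proposition~\ref{Prop:bij} without any ``degree-shift bookkeeping''. On the other side, ${}^{\perp}\Y\cap\U=\U_{\ge 0}^R$ follows directly from the co-$t$-structure \eqref{**}: the inclusion $\U_{\ge 0}^R=\Filt(R[\le\hs 0])\subset{}^{\perp}(R[\le\hs 0]^{\perp})$ is automatic, and conversely for $X\in{}^{\perp}\Y\cap\U$ the co-$t$-structure triangle $X'\to X\to X''$ with $X''\in\U_{<0}^R\subset\Y$ gives $\Hom(X,X'')=0$, whence $X$ is a summand of $X'\in\U_{\ge 0}^R$. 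The same argument gives $\X\cap\U=\U_{\le 0}^Q$, and Proposition~\ref{Prop:bij} applies verbatim.

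As written, your sketch contains a genuine off-by-one error. With your choice (so $\Y=\T_R^{\le 0}$), the condition $P\in{}^{\perp}\Y$ does \emph{not} yield $\Hom_\T(P[>\hs 0],R)=0$ nor $P\ge R$. Indeed $R\in\T_R^{\le 0}=\Y$, so $P\in{}^{\perp}\Y$ already forces $\Hom_\T(P,R)=0$; the correct translation is $P\in\U_{\ge 0}^{R[-1]}=\U_{\ge 1}^R$, i.e.\ $P\ge R[1]$. Correspondingly $\Y^{\perp}=\T_R^{>0}=\T_{R[-1]}^{\ge 0}$, not $\T_R^{\ge 0}$, so your claimed matching of $\T_R^{>0}$ with $\T_R^{\ge 0}$ ``for hearts'' is not what is needed. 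Once you carry the shift $R\mapsto R[-1]$ through consistently—or adopt the paper's parametrisation $\Y=R[\le\hs 0]^{\perp}$ from the start—the argument goes through and coincides with the paper's one-line proof.
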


\begin{proof}
There exists $Q, R\in \silt \U$ such that $\X=Q[<\hs 0]^{\perp}$ and $\Y=R[\le \hs0]^{\perp}$.
Since $\X\cap \Y^{\perp}=\T_{Q}^{\le 0}\cap\T_{R}^{\ge 0}$ and $\X\cap{}^{\perp}\Y\cap\U= \U_{\le 0}^{Q}\cap \U_{\ge 0}^{R}$ hold, the assertion follows from Proposition \ref{Prop:bij}.
%Notice that  $\X=\T_{Q}^{\le 0}=\T^{Q}_{\le 0}\subset \T^{\rm p}$ and $\Y={}^{\perp}(\SSS R[>\hs 0])$ by Assumption \ref{Assu:main}, then $\X\cap\Y=\T^{Q}_{\le 0}\cap \T^{\SSS R}_{\ge 0}$ 
 %and $\X\cap\SSS \Y=\T_{Q}^{\le 0}\cap \SSS R[>\hs 0]^{\perp}=\T_{Q}^{\le 0}\cap \T_{\SSS R}^{\le 0}$.
% Let $P\in \silt\T^{\rm p}$.  Notice that ${}^{\perp}R[>\hs0]\cap \T^{\rm p}=\Filt R[\le \hs 0]$, so $P\in {}^{\perp}\Y\Longleftrightarrow P\in \Filt R[\le \hs 0]=\T_{\ge 0}^{R}$.  
% Also notice that $\X\cap \T^{\rm p}=\T^{Q}_{\le 0}$, then $P\in \X\Longleftrightarrow P\in \T^{Q}_{\le 0}$.
% Then by , we have the desired poset isomorphism. 
\end{proof}

\begin{Cor}[Corollary \ref{Cor:d-termA}]\label{Cor:d-term}
  There is a poset isomorphism.
\[ \{\text{$d$-term silting objects in $\per A$}\}\xleftrightarrow{\cong} (\SMC\D^{\rm b}(A))\cap \D^{[1-d, 0]}.
%\{\text{SMCs of $\cal D^{\rm b}(A)$ contained in $\cal D^{[1-d,0]}$} \}.
\]
\end{Cor}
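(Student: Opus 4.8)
The plan is to deduce Corollary~\ref{Cor:d-term} directly from Theorem~\ref{Thm:silt} by making the correct choice of the two silting $t$-structures and then identifying the resulting subcategories with the ones in the statement. Set $\T=\D^{\rm b}(A)$ and $\U=\per A$; Assumption~\ref{Assu} is satisfied here (this is exactly the setting underlying Theorem~\ref{Thm:KYbij}). Take $\X=\D^{\le 0}$ and $\Y=\D^{\le -d}$, both of which are silting $t$-structures: indeed $\X=A[<\hs 0]^{\perp}=\T_{A}^{\le 0}$ and $\Y=A[\le\hs d-1]^{\perp}=\T_{A[-d]}^{\le 0}$, so $Q=A$ and $R=A[-d]$ in the notation of the proof of Theorem~\ref{Thm:silt}. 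Theorem~\ref{Thm:silt} then gives a poset isomorphism
\[
\{P\in\silt\U\mid P\in\X\cap{}^{\perp}\Y\}\xleftrightarrow{\cong}\{\H\in\silheart\T\mid\H\subset\X\cap\Y^{\perp}\}.
\]

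Next I would translate both sides into the language of the statement. For the left-hand side: $P\in\X=\D^{\le 0}$ means $\h^{>0}(P)=0$, equivalently $\Hom_{\D^{\rm b}(A)}(A[<\hs 0],P)=0$; and $P\in{}^{\perp}\Y={}^{\perp}\D^{\le -d}$ means $\Hom_{\D^{\rm b}(A)}(P,\D^{\le -d})=0$, i.e.\ $\Hom_{\D^{\rm b}(A)}(P,A[\ge\hs d])=0$ since $\D^{\le -d}=\Filt(A[\ge\hs d])$ for $A$ a silting object generating $\T_{A}^{\le -d}$ (more plainly, $\h^{<d}(\RHom_A(P,-))$ vanishes amounts to the stated $\Hom$-vanishing after unpacking). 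Thus the left-hand side is precisely the set of $d$-term silting objects in $\per A$, by definition. For the right-hand side: the silting heart $\H_P$ of $P\in\silt\per A$ is the heart of the silting $t$-structure, and by Theorem~\ref{Thm:KYbij} the passage $\H_P\mapsto(\text{simple objects of }\H_P)$ is a bijection $\silheart\D^{\rm b}(A)\xleftrightarrow{1:1}\SMC\D^{\rm b}(A)$; moreover $\H_P\subset\X\cap\Y^{\perp}=\D^{\le 0}\cap{}(\D^{\le -d})^{\perp}=\D^{\le 0}\cap\D^{\ge 1-d}=\D^{[1-d,0]}$. Since $\H$ is a length category, $\H\subset\D^{[1-d,0]}$ is equivalent to its set $S$ of simple objects lying in $\D^{[1-d,0]}$ (one direction is trivial; the other follows because $\D^{[1-d,0]}$ is closed under extensions and $\H=\Filt(S)$). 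Hence the right-hand side is $(\SMC\D^{\rm b}(A))\cap\D^{[1-d,0]}$.

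Finally I would check the poset structures match. On the silting side the order is \eqref{co-t-order}, on the heart side it is \eqref{t-order}; Lemma~\ref{Lem:poset} already shows $\silt\U\xleftrightarrow{\cong}\silheart\T$ is a poset isomorphism, and the $\SMC$ side inherits the order from $\silheart\T$ via Theorem~\ref{Thm:KYbij}, so the composite bijection is automatically a poset isomorphism. Combining the two identifications with the isomorphism from Theorem~\ref{Thm:silt} yields the claim. The only genuinely delicate point is the bookkeeping in the $\Hom$-vanishing translations — in particular being careful about which shifts land in $\D^{\le 0}$ versus $\D^{\le -d}$ and that ${}^{\perp}\D^{\le -d}\cap\per A$ really is cut out by $\Hom(P,A[\ge\hs d])=0$ rather than some off-by-one variant; everything else is a direct application of the machinery already set up.
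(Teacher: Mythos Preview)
Your approach is essentially identical to the paper's: both apply Theorem~\ref{Thm:silt} with $\X=\D^{\le 0}$ and $\Y=\D^{\le -d}$, then identify the two sides using the definition of $d$-term silting and the bijection $\silheart\D^{\rm b}(A)\xleftrightarrow{\cong}\SMC\D^{\rm b}(A)$ from Theorem~\ref{Thm:KYbij}. One cosmetic slip: the silting object underlying $\Y=\D^{\le -d}$ is $A[d]$, not $A[-d]$ (since $\T_{A[d]}^{\le 0}=(A[d])[<\hs0]^{\perp}=A[<\hs d]^{\perp}=\D^{\le -d}$), but this does not affect the argument.
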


\begin{proof}Let $\T=\D^{\rm b}(A)$ and $\U=\per A$.
Let $\X=\D^{\le 0}$ and $\Y=\D^{\le -d}$. 
Then $\Y^{\perp}=\D^{\ge 1-d}$ and $\per A\cap {}^{\perp}\Y=\Filt (A[<\hs d])$.
By Theorem \ref{Thm:silt}, we have a poset isomorphism  
$ 
\{P\in \per A\mid P\in \D^{\le 0}\cap \Filt A[<\hs d] \} \xleftrightarrow{\cong}  \{ \H\in\silheart \D^{\rm b}(A)\mid \H\subset \D^{[1-d, 0]}\}$.
Using the bijection $\silheart\D^{\rm b}(A)\xleftrightarrow{\cong}\SMC \D^{\rm b}(A)$ in Theorem \ref{Thm:KYbij},
%Notice that  $\silheart\D^{\rm b}(A)$ bijectively corresponds with $\SMC \D^{\rm b}(A)$ by \cite{KY}.
we obtain the assertion.
\end{proof}

Next we prove Theorem \ref{Thm:KYA}.

%We need the  following observation.
%\begin{Lem}\label{Lem:KY}
%Under Assumption \ref{Assu:main}, let $P\in \silt \T^{\rm p}$. Then 
% \begin{enumerate}[\rm (1)]
%  \item $\Filt \cal H_{P}[\ge \hs 0]=P[<\hs 0]^{\perp}={}^{\perp}(P[\ge \hs0]^{\perp});$
%  \item $\Filt \cal H_{P}[\le \hs0]=P[>\hs0]^{\perp}=(P[\le \hs0]^{\perp})^{\perp}=({}^{\perp}\SSS P[\le\hs0])^{\perp}.$
% \end{enumerate}
%\end{Lem}
%\begin{proof}
%(1) follows directly  from the  $t$-structure $\T=P[<\hs0]^{\perp}\perp P[\ge \hs0]^{\perp}$ with heart $\cal H_{P}$. 
 
% (2) follows from the $t$-structure  $\T=P[\le \hs0]^{\perp}\perp P[> \hs0]^{\perp}$
% and   relative Serre functor $\SSS$.
% \end{proof}

%Now we are ready to show Theorem \ref{Thm:KY}.

\begin{Thm}[Theorem \ref{Thm:KYA}]\label{Thm:KY}
Under Assumption \ref{Assu:main},
let  $\T=\X\perp\X^{\perp}={}^{\perp}\cal Z\perp \cal Z$
be any two  $t$-structures.  There is a poset isomorphism  
\[ 
%(\silt \T^{\rm p})\cap \X\cap \Y
%\xleftrightarrow{1:1} 
%(\silheart \T)\cap X\cap \SSS\Y.\]
\{P\in \silt\U\mid P\in \X\cap \cal Z \} \xleftrightarrow{\cong}  \{ \H\in\silheart \T\mid \H\subset \X\cap \SSS\cal Z\}.\]
\end{Thm}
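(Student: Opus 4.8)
The plan is to reduce Theorem~\ref{Thm:KY} to Theorem~\ref{Thm:silt} by converting the two arbitrary $t$-structures $\X$ and $\cal Z$ into \emph{silting} $t$-structures, at the cost of replacing $\cal Z$ by $\SSS\cal Z$. The key observation is that under Assumption~\ref{Assu:main} the relative Serre functor $\SSS$ lets us move $(\,)^{\perp}$ and ${}^{\perp}(\,)$ past each other: for $\U' \in \silt\U$ and the associated silting $t$-structure $\T = \T^{\le 0}_{\U'}\perp\T^{>0}_{\U'}$, one has a functorial isomorphism $D\Hom_{\T}(\U'[i], Y) \cong \Hom_{\T}(Y, \SSS\U'[i])$, so $\U'[<\!0]^{\perp} = {}^{\perp}(\SSS\U'[<\!0])$ and dually. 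This means that the co-aisle $\X^{\perp}$ of a silting $t$-structure is both a right-perpendicular and (after applying $\SSS$ to the silting object) a left-perpendicular; hence such aisles can be fed into either slot of Theorem~\ref{Thm:silt}.

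First I would observe that any $t$-structure $\T = \X\perp\X^{\perp}$ appearing here should in fact already be a silting $t$-structure: in the intended application $\X = \D^{\le 0}$ and $\cal Z = \D^{\ge 1-d}$, and these are silting $t$-structures associated to $A$ and to a shift of $A$. So the content is: given the arbitrary $t$-structure $\T = {}^{\perp}\cal Z\perp\cal Z$, I want to exhibit $\cal Z$ as $\Y^{\perp}$ for some silting $t$-structure $\T = \Y\perp\Y^{\perp}$, and then check that the two subsets being compared match up after applying $\SSS$. Concretely, if $\cal Z = R[\le\!0]^{\perp}$ is already of silting form with silting object $R$ — which is the situation one reduces to — then $\Y := {}^{\perp}\cal Z = {}^{\perp}(R[\le\!0]^{\perp})$, and the Serre duality isomorphism gives $\cal Z = R[\le\!0]^{\perp} = {}^{\perp}(\SSS R[\le\!0])$, so that $\Y^{\perp} = (\SSS R)[\le\!0]^{\perp} = \SSS(R[\le\!0]^{\perp}) = \SSS\cal Z$. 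Thus $\T = \Y\perp\Y^{\perp}$ is the silting $t$-structure of $\SSS R$, and $\Y^{\perp} = \SSS\cal Z$ while ${}^{\perp}\Y \cap \U$ recovers $\cal Z\cap\U$ via the same duality.

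With this translation in hand, I would apply Theorem~\ref{Thm:silt} to the silting $t$-structures $\X = \X\perp\X^{\perp}$ (silting object $Q$) and $\Y = \Y\perp\Y^{\perp}$ (silting object $\SSS R$). Theorem~\ref{Thm:silt} yields a poset isomorphism
\[
\{P\in\silt\U\mid P\in\X\cap{}^{\perp}\Y\}\xleftrightarrow{\cong}\{\H\in\silheart\T\mid\H\subset\X\cap\Y^{\perp}\}.
\]
Now $\X\cap\Y^{\perp} = \X\cap\SSS\cal Z$ directly from the previous paragraph, which gives the right-hand side of the claimed bijection. For the left-hand side I need $\X\cap{}^{\perp}\Y\cap\U = \X\cap\cal Z\cap\U$; this is exactly where the Serre functor is used again, since ${}^{\perp}\Y = {}^{\perp}({}^{\perp}(R[\le\!0]^{\perp}))$ and on $\U$ this agrees with $R[\le\!0]^{\perp} = \cal Z$ because $R\in\U$ and $\SSS$ preserves $\U$, so the perpendicularity conditions translate into each other by the functorial isomorphism $D\Hom_{\T}(R[i],-)\cong\Hom_{\T}(-,\SSS R[i])$.

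The main obstacle I anticipate is the bookkeeping around \emph{which} arbitrary $t$-structure can be put in silting form and making sure the reduction does not secretly assume what we are trying to drop. In Theorem~\ref{Thm:silt} both $t$-structures are silting; here $\X$ is silting by hypothesis-of-application but $\cal Z$ need only be a $t$-structure whose co-aisle $\cal Z$ equals $\SSS$ of something realizable. The honest statement is that the argument works whenever $\cal Z = R[\le\!0]^{\perp}$ for some $R\in\silt\U$ — equivalently $^{\perp}\cal Z$ is a silting $t$-structure — and one must verify that the hypothesis "$\T = {}^{\perp}\cal Z\perp\cal Z$ is a $t$-structure" together with boundedness forces this, or else simply build $R$ by hand (e.g.\ $R = \SSS^{-1}$ of a silting object cut out by $\cal Z$). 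I would handle this by first proving the lemma that, under Assumption~\ref{Assu:main}, $\T = \Y\perp\Y^{\perp}$ is a silting $t$-structure with silting object $P$ if and only if $\T = {}^{\perp}(\SSS^{-1}\Y^{\perp})\perp\SSS^{-1}\Y^{\perp}$ is the "co-silting" presentation, so that the two slots of Theorem~\ref{Thm:silt} are interchangeable via $\SSS$; the rest is then formal substitution as above.
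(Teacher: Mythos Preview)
Your reduction has a genuine gap: the entire point of Theorem~\ref{Thm:KY} over Theorem~\ref{Thm:silt} is that the two $t$-structures $\T=\X\perp\X^{\perp}={}^{\perp}\cal Z\perp\cal Z$ are \emph{arbitrary}, not silting. Your argument assumes from the outset that $\X=Q[<\!0]^{\perp}$ and $\cal Z=R[\le\!0]^{\perp}$ for some $Q,R\in\silt\U$ (you say this explicitly: ``The honest statement is that the argument works whenever $\cal Z = R[\le\!0]^{\perp}$ for some $R\in\silt\U$''). Assumption~\ref{Assu:main} only guarantees that each silting object produces a bounded $t$-structure; it says nothing about the converse. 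So you cannot, in general, manufacture the silting objects $Q$ and $R$ that your reduction needs, and the closing lemma you propose (interchanging the two slots of Theorem~\ref{Thm:silt} via $\SSS$) still presupposes that one of the $t$-structures is silting to begin with. The reduction is circular with respect to exactly the hypothesis the theorem is designed to remove.

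The paper's proof avoids this completely by arguing directly, and the mechanism is worth noting because it is much simpler than what you attempted. For a fixed $P\in\silt\U$, the paper checks the two conditions separately. For $\X$ it uses only that $\X$ is an aisle of a torsion pair, so ${}^{\perp}(\X^{\perp})=\X$; combined with ${}^{\perp}((\U^{P}_{\le 0})^{\perp})=\T_{P}^{\le 0}$ (from \eqref{*} and \eqref{**}) this gives $P\in\X\Leftrightarrow\U^{P}_{\le 0}\subset\X\Leftrightarrow\T^{\le 0}_{P}\subset\X\Leftrightarrow\H_{P}\subset\X$. For $\cal Z$ it uses Serre duality once to rewrite $(\U^{P}_{\ge 0})^{\perp}={}^{\perp}\SSS\,\U^{P}_{\ge 0}$, and then only that $\SSS\cal Z$ is a coaisle of a torsion pair, so $({}^{\perp}\SSS\cal Z)^{\perp}=\SSS\cal Z$; this yields $P\in\cal Z\Leftrightarrow\T^{\ge 0}_{P}\subset\SSS\cal Z\Leftrightarrow\H_{P}\subset\SSS\cal Z$. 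Neither chain ever invokes a silting object for $\X$ or $\cal Z$; the only input is the torsion-pair identities ${}^{\perp}(\X^{\perp})=\X$ and $({}^{\perp}\cal Z)^{\perp}=\cal Z$, which hold for any $t$-structure.
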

\begin{proof}
Let $P\in \silt\U$. 
Thanks to Lemma \ref{Lem:poset}, it suffices to show  that  $P\in\X$ if and only if $\cal H_{P}\subset \X$, and 
 $P\in\cal Z$ if and only if $\cal H_{P}\subset \SSS\cal Z$.

\noindent
 (a) By \eqref{*}  and \eqref{**},  we have ${}^{\perp}({\U^{P}_{\le 0}}{}^{\perp})={}^{\perp}\T_{P}^{>0}=\T_{P}^{\le 0}$. Thus 
 %Since $\X$ is closed under $[1]$ and ${}^{\perp}(\X^{\perp})=\X$, then 
  \[P\in \cal X\Longleftrightarrow  \U^{P}_{\le 0} \subset \X \stackrel{{}^{\perp}(\X^{\perp})=\X}{\Longleftrightarrow}   {}^{\perp}({\U^{P}_{\le 0}}{}^{\perp}) \subset \X\Longleftrightarrow \T_{P}^{\le 0} \subset \X
\Longleftrightarrow   \cal H_{P}\subset \X.\]
 % where the middle equivalence  is by Lemma \ref{Lem:KY}(1).
 
\noindent (b)  By \eqref{*}  and \eqref{**},  we have 
$({}^{\perp}\SSS\,\U^{P}_{\ge 0})^{\perp}=({\U^{P}_{\ge 0}}{}^{\perp})^{\perp}={\T^{<0}_{P}}{}^{\perp}=\T_{P}^{\ge 0}$. Thus
%Since $\cal Z$ is closed under $[-1]$ and we have a $t$-structure $\T={}^{\perp}\nu\cal Z\perp \nu \cal Z$, then
    \[ P\in \cal Z \Longleftrightarrow  \U_{\ge 0}^{P} \subset \cal Z \Longleftrightarrow  \SSS \,\U_{\ge 0}^{P} \subset \SSS \cal Z \stackrel{({}^{\perp}\SSS\cal Z)^{\perp}=\SSS\cal Z}{\Longleftrightarrow}  ({}^{\perp} \SSS \,\U_{\ge 0}^{P})^{\perp} \subset \SSS\cal Z\Longleftrightarrow \T_{P}^{\ge 0}\subset \SSS  \cal Z
              \Longleftrightarrow  \cal H_{P}\subset \SSS \cal Z.\] 
   So the assertion is true.
   \end{proof} 
   
\begin{Cor}[Corollary \ref{Cor:GorA}]\label{Cor:Gor}
 There is a poset isomorphism
\[(\silt \per A)\cap \D^{[1-d, 0]}\xleftrightarrow{\cong} (\SMC \D^{\rm b}(A))\cap \D^{[-d,0]}_{-}.\]
\end{Cor}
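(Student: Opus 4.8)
The plan is to derive Corollary \ref{Cor:Gor} as a direct specialization of Theorem \ref{Thm:KY}, so the bulk of the work is verifying that the hypotheses of Assumption \ref{Assu:main} are met in the intended setting, and then identifying all the subcategories that appear. Concretely, I would take $\T=\D^{\rm b}(A)$, $\U=\per A$, $\X=\D^{\le 0}$, and $\cal Z=\D^{\ge 1-d}$. First I would record that $\U=\per A$ is a thick subcategory of $\T=\D^{\rm b}(A)$ and that the silting $t$-structure condition \eqref{*} holds for every $P\in\silt\per A$: this is exactly Theorem \ref{Thm:KYbij}, whose first bijection sends $P$ to $\D^{\bb}(A)=P[<\hs0]^\perp\perp P[\ge\hs0]^\perp$, so Assumption \ref{Assu} is satisfied. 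Then I would check that $A$ being finite-dimensional Iwanaga-Gorenstein gives a relative Serre functor $\SSS$ in the sense of Assumption \ref{Assu:main}: take $\SSS:=\RHom_A(-,A)^*[0]$ appropriately normalized, i.e. $\SSS=-\ot^{\bf L}_A DA$ acting on $\D^{\bf b}(A)$; the Gorenstein hypothesis ($\mathrm{id}_AA<\infty$ on both sides) is precisely what guarantees that this functor restricts to an autoequivalence of $\per A$, and the Auslander–Reiten/Serre formula $D\Hom_\T(X,Y)\cong\Hom_\T(Y,\SSS X)$ holds for $X\in\per A=\U$ and arbitrary $Y\in\D^{\bf b}(A)$. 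This is the one genuinely ``Gorenstein-specific'' input and is the step I expect to be the main obstacle — not because it is deep, but because one must be careful that $\SSS$ restricts to $\U$ in both directions and that the functorial isomorphism is valid for $Y$ ranging over all of $\T$, which uses finiteness of injective dimension of $A$ on the correct side.

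With Assumption \ref{Assu:main} in place, I would apply Theorem \ref{Thm:KY} verbatim. It yields a poset isomorphism
\[\{P\in\silt\per A\mid P\in\X\cap\cal Z\}\xleftrightarrow{\cong}\{\H\in\silheart\D^{\rm b}(A)\mid\H\subset\X\cap\SSS\cal Z\}.\]
The remaining task is a translation of the two sides. On the left, $\X\cap\cal Z=\D^{\le 0}\cap\D^{\ge 1-d}=\D^{[1-d,0]}$, so the left-hand set is exactly $(\silt\per A)\cap\D^{[1-d,0]}$. On the right, I need to compute $\SSS\cal Z=\SSS\D^{\ge 1-d}$. Since $\SSS=\nu$ is the Nakayama functor (derived), we have $\SSS\D^{\ge m}=\nu\D^{\ge m}$, and by the very definition given in the excerpt, $\D^{[m,n]}_{-}=\D^{\le n}\cap\nu\D^{\ge m+1}$; hence $\X\cap\SSS\cal Z=\D^{\le 0}\cap\nu\D^{\ge 1-d}=\D^{\le 0}\cap\nu\D^{\ge (-d)+1}=\D^{[-d,0]}_{-}$. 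Thus the right-hand set is $\{\H\in\silheart\D^{\rm b}(A)\mid\H\subset\D^{[-d,0]}_{-}\}$.

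Finally I would invoke the second bijection of Theorem \ref{Thm:KYbij}, namely $\silheart\D^{\rm b}(A)\xleftrightarrow{\cong}\SMC\D^{\rm b}(A)$ sending a silting heart to its set of simple objects; this is a poset isomorphism onto $\SMC\D^{\rm b}(A)$ with the induced order. Under it, the condition $\H\subset\D^{[-d,0]}_{-}$ corresponds to the SMC lying in $\D^{[-d,0]}_{-}$ (a heart is contained in an extension-closed, shift-stable region iff its simple objects are, since by Lemma \ref{Lem:bdd} the heart is $\Filt$ of its simples and $\D^{[-d,0]}_{-}$ is extension-closed). Composing, we obtain the desired poset isomorphism
\[(\silt\per A)\cap\D^{[1-d,0]}\xleftrightarrow{\cong}(\SMC\D^{\rm b}(A))\cap\D^{[-d,0]}_{-},\]
which is Corollary \ref{Cor:GorA}. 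The only points requiring care beyond bookkeeping are the verification that $\nu$ gives a relative Serre functor restricting to $\per A$ (the Gorenstein input) and the clean identification $\nu\D^{\ge 1-d}\cap\D^{\le 0}=\D^{[-d,0]}_{-}$, both of which I would state as short lemmas preceding the application of Theorem \ref{Thm:KY}.
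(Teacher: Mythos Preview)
Your proposal is correct and follows essentially the same route as the paper: set $\T=\D^{\rm b}(A)$, $\U=\per A$, $\X=\D^{\le 0}$, $\cal Z=\D^{\ge 1-d}$, note that the Nakayama functor $\nu$ is a relative Serre functor because $A$ is Iwanaga--Gorenstein, apply Theorem~\ref{Thm:KY}, identify $\X\cap\SSS\cal Z=\D^{\le 0}\cap\nu\D^{\ge 1-d}=\D^{[-d,0]}_{-}$, and then pass from silting hearts to SMCs via Theorem~\ref{Thm:KYbij}. One small remark: in your final translation step you call $\D^{[-d,0]}_{-}$ ``shift-stable'', which it is not; but as you correctly observe, extension-closedness alone suffices for $\H=\Filt S\subset\D^{[-d,0]}_{-}\Leftrightarrow S\subset\D^{[-d,0]}_{-}$.
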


\begin{proof}
Let $\T=\D^{\rm b}(A)$ and  $\U=\per A$.
Let $\X=\D^{\le 0}$ and $\cal Z=\D^{\ge 1-d}$.
Since $A$ is Iwanaga-Gorenstein, then the Nakayama functor $\nu$ is the relative Serre functor. 
By Theorem \ref{Thm:KY}, we have a poset isomorphism 
\[(\silt \per A )\cap \D^{[1-d,0]}\xleftrightarrow{\cong} \{ \H\in\silheart \D^{\rm b}(A)\mid \H\subset \D^{[-d,0]}_{-}\}.\]
By Theorem \ref{Thm:KYbij}, we obtain the assertion.
\end{proof}

\subsection{Proof of results in Section \ref{section:1.1}}

%By the bijection in Theorem \ref{Thm}, we get the formula of number of $(-d)$-CY configurations.
%\begin{Cor}
%The number of  $(-d)$-CY configurations of $\Z\Delta/\nu[d]$ is  the positive Fuss-Catalan number $C_{d}^{+}(W)$.\end{Cor}
In this subsection, $H$ is a hereditary $k$-algebra. We will write $\D$ (resp. $\C$) for $\D^{\rm b}(H)$ (resp. $\C_{-d}(H)$) for simplicity, and $\H=\mod H$.
Let $ S$ be an SMC of $\D$. Then $\cal H_{S}:=\Filt S$ is the heart of a  $t$-structure $(\D^{\le 0}_{ S}, \D^{\ge 0}_{S})$ given by 
%\[ \D^{\le 0}_{S}:= \bigcup_{n\ge 0}\cal H_{S}[n]\ast\cal H_{S}[n-1]\ast \cdots \ast \cal H_{S},
%\   \   \D^{\ge 0}_{S}:=\bigcup_{n\ge 0}\cal H_{ S}\ast \cal H_{S}[-1]\ast\cdots \ast \cal H_{S}[-n]. \]
\[ \D^{\le 0}_{S}:= \Filt(S[\ge \hs 0]) \ \mbox{and} \
\D^{\ge 0}_{S}:=\Filt(S[\le \hs0]). \] 

We need the following  observation. 
%Recall that $\cal H$ is the module category $\mod kQ$.
\begin{Lem}\label{Lem:1}
Let $S, T$ be two SMCs of $\D$.  Then the following are equivalent.
\begin{enumerate}[\rm(1)]
\item
$\cal H_{S}\subset \D_{T}^{\le 0}\cap \nu \D_{T}^{\ge 1-d}$;
\item
$\cal H_{T}\subset \nu^{-1}\D_{S}^{\le d-1}\cap \D_{S}^{\ge 0}$;
\item
$\D_{S}^{\le 0}\subset \D_{T}^{\le 0}$ and 
$\D^{\ge 0}_{S}\subset \nu \D_{T}^{\ge 1-d}$;
\item 
$\D^{\le 0}_{T}\subset \nu^{-1} \D_{S}^{\le d-1}$ and 
$\D_{T}^{\ge 0}\subset \D_{S}^{\ge 0}$.
\end{enumerate}
\end{Lem}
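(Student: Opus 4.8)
The statement is a purely formal manipulation with aisles, co-aisles, the operation $\Filt$, and the Nakayama (Serre) functor $\nu$, which is an exact autoequivalence of $\D=\D^{\rm b}(H)$ commuting with the shift. The plan is to prove the three equivalences $(1)\iff(3)$, $(2)\iff(4)$ and $(3)\iff(4)$, which together give the claim. First I would record the ingredients. For an SMC $S$ (resp.\ $T$) the pair $(\D^{\le 0}_S,\D^{\ge 1}_S)$ is a bounded $t$-structure with heart $\cal H_S=\Filt S$ (cf.\ Theorem \ref{Thm:KYbij} and the discussion above), so by Lemma \ref{Lem:bdd} one has $\D^{\le 0}_S=\Filt(\cal H_S[\ge 0])$ and $\D^{\ge 0}_S=\Filt(\cal H_S[\le 0])$, together with $\D^{\le n}_S=\D^{\le 0}_S[-n]$ and $\D^{\ge n}_S=\D^{\ge 0}_S[-n]$. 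I would also note the closure properties: an aisle such as $\D^{\le 0}_T$ is extension-closed and stable under $[1]$; a co-aisle such as $\D^{\ge 1-d}_T$ is extension-closed and stable under $[-1]$; and, $\nu$ being an exact autoequivalence commuting with $[1]$, the subcategory $\nu\D^{\ge 1-d}_T$ is extension-closed and $[-1]$-stable, while $\nu^{-1}\D^{\le d-1}_S$ is extension-closed and $[1]$-stable.

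For $(1)\iff(3)$ and $(2)\iff(4)$ I would use the elementary observation that if $\cal X$ is extension-closed and $[1]$-stable then $\cal H_S\subset\cal X\iff\cal H_S[\ge 0]\subset\cal X\iff\D^{\le 0}_S=\Filt(\cal H_S[\ge 0])\subset\cal X$, and dually, if $\cal Y$ is extension-closed and $[-1]$-stable then $\cal H_S\subset\cal Y\iff\D^{\ge 0}_S=\Filt(\cal H_S[\le 0])\subset\cal Y$. Applying the first observation with $\cal X=\D^{\le 0}_T$ and the second with $\cal Y=\nu\D^{\ge 1-d}_T$ turns $(1)$ into $(3)$; applying the first observation to the heart $\cal H_T$ and $\cal X=\nu^{-1}\D^{\le d-1}_S$, and the second to $\cal H_T$ and $\cal Y=\D^{\ge 0}_S$, turns $(2)$ into $(4)$.

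For $(3)\iff(4)$ I would treat the two inclusions in each condition separately. The first inclusion of $(3)$ and the second of $(4)$ coincide: since $(\D^{\le 0}_S,\D^{\ge 1}_S)$ and $(\D^{\le 0}_T,\D^{\ge 1}_T)$ are torsion pairs, taking orthogonals and shifting by $[1]$ gives $\D^{\le 0}_S\subset\D^{\le 0}_T\iff\D^{\ge 1}_T\subset\D^{\ge 1}_S\iff\D^{\ge 0}_T\subset\D^{\ge 0}_S$. For the remaining pair of inclusions, using $\D^{\ge 1-d}_T=(\D^{\le -d}_T)^{\perp}$, $\D^{\le d-1}_S={}^{\perp}(\D^{\ge d}_S)$, and that $\nu$ is an equivalence commuting with the shift,
\begin{align*}
\D^{\ge 0}_S\subset\nu\D^{\ge 1-d}_T
&\iff \nu^{-1}\D^{\ge 0}_S\subset\D^{\ge 1-d}_T
\iff \Hom_{\D}(\D^{\le -d}_T,\nu^{-1}\D^{\ge 0}_S)=0\\
&\iff \Hom_{\D}(\nu\D^{\le -d}_T,\D^{\ge 0}_S)=0
\iff \Hom_{\D}(\nu\D^{\le 0}_T,\D^{\ge d}_S)=0\\
&\iff \nu\D^{\le 0}_T\subset\D^{\le d-1}_S
\iff \D^{\le 0}_T\subset\nu^{-1}\D^{\le d-1}_S ,
\end{align*}
where the fourth equivalence uses $\nu\D^{\le -d}_T=(\nu\D^{\le 0}_T)[d]$ and $\D^{\ge 0}_S=\D^{\ge d}_S[d]$. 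The right-hand side is exactly the first inclusion of $(4)$, so $(3)\iff(4)$, and the lemma follows.

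I do not expect a genuine obstacle here: the proof is bookkeeping. The one point that needs care is the last chain of equivalences in $(3)\iff(4)$, where one must rewrite an inclusion of co-aisles as a $\Hom$-vanishing, move $\nu$ across using only that it is an equivalence (Serre duality itself is not needed), and keep track of the shift by $[d]$; one must also stay consistent with the conventions $\D^{\le n}=\D^{\le 0}[-n]$ and with which of $\D^{\le},\D^{\ge}$ is the aisle throughout.
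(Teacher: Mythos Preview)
Your proof is correct and follows essentially the same approach as the paper's: the same three equivalences $(1)\iff(3)$, $(2)\iff(4)$, $(3)\iff(4)$, each established via the same ingredients (hearts generate aisles/co-aisles by Lemma~\ref{Lem:bdd}, and passing between inclusions of aisles and of co-aisles via orthogonals). Your treatment is simply a more detailed unpacking of what the paper states tersely, in particular your chain of $\Hom$-vanishing equivalences for the second half of $(3)\iff(4)$ makes explicit what the paper compresses into a single displayed biconditional.
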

\begin{proof}
$(1) \Leftrightarrow (3)$  follows from 
\[ (\cal H_{S}\subset \D_{T}^{\le 0} \Longleftrightarrow \D_{S}^{\le 0}\subset \D_{T}^{\le 0})  \mbox{ and } (\cal H_{S}\subset \nu \D_{T}^{\ge 1-d} \Longleftrightarrow \D^{\ge 0}_{S}\subset \nu \D_{T}^{\ge 1-d}).\]
The similar argument shows $(2) \Leftrightarrow (4)$. Finally, 
 $(3)\Leftrightarrow (4)$  follows from 
 \[ (\D_{S}^{\le 0}\subset \D_{T}^{\le 0} \Longleftrightarrow \D_{T}^{\ge 0}\subset \D_{S}^{\ge 0}) \mbox{ and } (\D^{\ge 0}_{S}\subset \nu \D_{T}^{\ge 1-d}  \Longleftrightarrow \D^{\le 0}_{T}\subset \nu^{-1} \D_{S}^{\le d-1}). \qedhere\] 
\end{proof}
%Assume  $S$ is  a set of $\cal C_{-d}(kQ)$. Since $\cal C_{-d}(kQ)$ has only finitely many indecomposable objects, then $\Filt S$ is functorially finite. So we have a torsion pair $\cal C_{-d}(kQ)=\Filt S\perp \Filt S^{\perp}$. The following result is useful.

Recall that $\pi: \D \ra \C$ is the natural functor and by \eqref{equation:ind-}, there is a bijection
\begin{equation} \ind \D^{[-d,0]}_{-} \xlongrightarrow{\simeq} \ind \C. \label{ind}\end{equation}
In the rest, we write $\pi(X)$ as $X$ for any $X\in \D$.
We give a lemma which plays an important role in the sequel.
\begin{Lem}\label{Lem:hom}
Let $X, Y\in \D^{[-d,0]}_{-} $ and $0\le i\le d$. Then we have 
\[ \Hom_{\C}(X,Y[-i])= \Hom_{\D}(X,Y[-i])\op D\Hom_{\D}(Y, X[i-d]).\]
\end{Lem}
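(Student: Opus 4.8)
Lemma \ref{Lem:hom} is a computation of morphism spaces in the orbit category $\C = \C_{-d}(H)$ in terms of morphism spaces in $\D = \D^{\rm b}(H)$, and the key tool is the general formula for Hom-spaces in orbit categories together with the Serre duality coming from the Nakayama functor.

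The plan is to start from the defining formula for morphisms in the orbit category. Since $\C_{-d}(H) = \D^{\rm b}(H)/\nu[d]$, for any $X,Y\in\D$ we have
\[ \Hom_{\C}(X,Y) = \bigoplus_{j\in\Z} \Hom_{\D}(X, (\nu[d])^{j}Y) = \bigoplus_{j\in\Z}\Hom_{\D}(X, \nu^{j}Y[jd]). \]
Applying this with $Y$ replaced by $Y[-i]$ gives $\Hom_{\C}(X,Y[-i]) = \bigoplus_{j\in\Z}\Hom_{\D}(X,\nu^{j}Y[jd-i])$. The heart of the proof is then to show that for $X,Y\in\D^{[-d,0]}_{-}$ and $0\le i\le d$, all summands with $j\notin\{0,1\}$ vanish, while the $j=0$ summand is $\Hom_{\D}(X,Y[-i])$ and the $j=1$ summand can be rewritten via Serre duality as $D\Hom_{\D}(Y,X[i-d])$. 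For the last point I would use that $\D^{\rm b}(H)$ has Serre functor $\nu[1]$ (as $H$ is finite-dimensional of finite global dimension, indeed hereditary), so $\Hom_{\D}(X,\nu Y[d-i]) = \Hom_{\D}(X, (\nu[1])Y[d-i-1]) \cong D\Hom_{\D}(Y[d-i-1], X) = D\Hom_{\D}(Y, X[i+1-d])$ — wait, one must be careful with the shift bookkeeping; the cleanest route is to write $\nu Y[d-i] = \SSS Y[d-i-1]$ with $\SSS = \nu[1]$ the Serre functor, so $\Hom_{\D}(X,\SSS(Y[d-i-1])) \cong D\Hom_{\D}(Y[d-i-1],X) = D\Hom_{\D}(Y, X[i-d+1])$. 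This does not match the stated $D\Hom_{\D}(Y,X[i-d])$ directly, so the correct identification of which value of $j$ contributes the dual term, and with which shift, is exactly the bookkeeping to get right: since $\nu$ itself (not $\nu[1]$) is what appears in the orbit, the $j=1$ term is $\Hom_{\D}(X,\nu Y[d-i])$, and $\nu Y[d-i] = \SSS Y [d-i-1]$, giving $D\Hom_{\D}(Y[d-i-1],X) = D\Hom_{\D}(Y,X[i-d+1])$; to land on $[i-d]$ one instead reads the Serre functor as $\nu[1]$ acting so that $\Hom_{\D}(X, \nu Y[d-i])=\Hom_{\D}(X,(\nu[1])(Y[d-i-1]))$, and I will simply present the computation carefully so the exponent comes out as in the statement.

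The main obstacle is the vanishing of the extraneous summands. For this I would exploit the chain \eqref{nu}, namely $\D^{\le m-1}\subset\nu\D^{\le m}\subset\D^{\le m}$ and the dual inclusions, to control where $\nu^{j}Y$ and its shifts live. Concretely, $Y\in\D^{[-d,0]}_{-}\subset\D^{\le 0}$ and, iterating, $\nu^{j}Y\in\D^{\le -j}$ — no, more precisely $\nu^{j}\D^{\le 0}\subset\D^{\le 0}$ for $j\ge 0$ and $\nu^{j}\D^{\le 0}\subset\D^{\le -j}$ needs the stronger statement; I would instead observe $\nu^{j}Y[jd-i]$ lies far in $\D^{\ge *}$ or $\D^{\le *}$ for $|j|\ge 2$ using that $Y$ is bounded in the strip $[-d,0]$ (roughly: for $j\ge 2$, $\nu^{j}Y$ sits in homological degrees $\le -j(d-1)-\text{(something)}$ after the shift by $jd$, pushing it below $X$'s support which is bounded below by $-d$; symmetrically for $j\le -1$), together with the defining vanishing conditions on the strip. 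The cofinality/boundedness properties of $\D^{[-d,0]}_{-}$ under $\nu$ — already encoded in \eqref{equation:ind-} and \eqref{nu} — are what make only $j=0,1$ survive. I would first record the precise homological bounds on $\nu^{\pm}$-translates of objects in the strip, then dispatch $|j|\ge 2$ by degree reasons, handle $j=0$ trivially, and handle $j=1$ by Serre duality as above. A subtlety worth flagging: one should check the orbit-category Hom formula applies, i.e. that the relevant direct sum is finite, which is guaranteed precisely by the same boundedness, so the two issues are really one and the same.
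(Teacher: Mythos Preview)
Your strategy matches the paper's, but the stumbling block in your bookkeeping is a genuine error: on $\D^{\rm b}(H)$ the Serre functor is $\nu$ itself, not $\nu[1]$. (For any finite-dimensional algebra of finite global dimension, the derived Nakayama functor $\nu=?\otimes^{\bf L}_A DA$ \emph{is} the Serre functor; it is the AR translation that equals $\nu[-1]$.) Once this is corrected, the $j=1$ summand is immediately
\[\Hom_{\D}(X,\nu Y[d-i])\cong D\Hom_{\D}(Y[d-i],X)=D\Hom_{\D}(Y,X[i-d]),\]
with no residual shift discrepancy to explain away.

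For the vanishing of the terms with $j\notin\{0,1\}$, the paper is also more efficient than the direct degree-bound attack you sketch. For $n<0$ it uses $Y\in\nu\D^{\ge 1-d}$ together with $nd-i\le -d$ to obtain $\nu^{n}Y[nd-i]\in\nu^{n+1}\D^{\ge 1}\subset\D^{\ge 1}$ via \eqref{nu}, killing the Hom from $X\in\D^{\le 0}$. For $n>1$ it does not bound degrees directly; instead it applies Serre duality once to rewrite the term as $D\Hom_{\D}(Y,\nu^{1-n}X[(1-n)d-(d-i)])$, and since $1-n<0$ and $0\le d-i\le d$, this is the case already treated with the roles of $X,Y$ and of $i,d-i$ interchanged. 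Your direct approach for $j\ge 2$ cannot succeed with the coarse inclusion $X\in\D^{\ge -d}$ alone: in the boundary case $j=2$, $i=d$ one only gets $\nu^{2}Y[d]\in\D^{\le -d}$, and $\Hom_{\D}(\D^{\ge -d},\D^{\le -d})$ need not vanish. One must exploit the sharper condition $X\in\nu\D^{\ge 1-d}$ (for instance by first peeling off one $\nu$ from both arguments), which is essentially the paper's Serre-duality reduction in disguise.
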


\begin{proof}
By the definition of  $\C$, we have 
\[ \Hom_{\C}(X,Y[-i])=\bigoplus_{n\in\Z}  \Hom_{\D}(X, \nu^{n}Y[nd-i]).\]
If $n<0$, then $nd-i\le -d$ and $\nu^{n}Y[nd-i]\in \nu^{n+1}\D^{\ge 1}\subset \D^{\ge 1}$ by \eqref{nu}. So  $\Hom_{\D}(X, \nu^{n}Y[nd-i])=0$.
If $n>1$, then 
$m:=1-n<0$ and 
%$\nu^{n-1}Y\in\nu \D^{\le 0}$, and  $i-nd\le -d$ implies $X[i-nd]\in \nu\D^{\ge 1}$.
\[ \Hom_{\D}(X, \nu^{n}Y[nd-i])=D \Hom_{\D}(\nu^{n-1}Y[nd-i], X)=D\Hom_{\D}(Y, \nu^{m}X[md-(d-i)])=0,\]
by the first case.
Thus the assertion follows.
\end{proof}
%\begin{Lem}\label{Lem:max}Let $S$ be a pre-$(-d)$-CY configuration of $\cal C_{-d}(kQ)$. Let $M\in \cal C_{-d}(kQ)$. If $\Hom_{\cal C_{-d}(kQ)}(M, X[-i])=0$ for any $X\in S$ and $0\le i\le d-1$, then $\Hom_{\cal C_{-d}(kQ)}(M, X[-d])=0$.\end{Lem}
%Let $\cal I$ (resp. $\cal P$) be the set of injective (resp. projective) $kQ$ modules.
%%\[\D^{\le 0}\cap\nu\D^{\ge -d+1}=\cal H\ast \cal H[1]\ast \cdots\ast \cal H[d-1]\ast (\cal H\backslash \cal I)[d].\]
%We give two bijections, which imply the first part of Theorem \ref{Thm} directly.

%\begin{Lem}\cite{KY}\label{Lem:KY}
%\begin{enumerate}[\rm (1)]
%\item There is a bijection between $\{ \text{SMCs in } \D^{\bb}(k\Delta)\} $ and $\{\text{silting objects in } \per k\Delta \}$;
%\item Let $\cal S=\{ S_{1}, \cdots, S_{n}\}$ be a SMC and let $P$ be the silting object corresponds to $\cal S$. Then 
%\[\Hom_{\D^{\bb}(k\Delta)}(P, \cal S[<0])=0=\Hom_{\D^{\bb}(k\Delta)}(P, \cal S[>0]).\]
% Moreover, we have $P\in \D_{\cal S}^{\le 0}\cap \nu^{-1} \D^{\ge 0}_{\cal S}$;
% \item
%We can write $P$ as $P=\bop_{i=1}^{n}P_{i}$ such that $\dim_{k}\Hom_{\D^{\bb}(k\Delta)}(P_{i},S_{j})=\delta_{i,j}$ for $1\le i, j\le n$.
% \end{enumerate}
%\end{Lem}

For $m\ge n$, we denote by $\D_{S}^{[m,n]}$ the intersection $\D_{S}^{\le n}\cap \D_{S}^{\ge m}$. The following lemma is useful.
\begin{Lem}\label{Lem:perp}
Let $H$ be a hereditary $k$-algebra,  $S\in \SMC\D\cap \D^{\le 0}\cap \nu\D^{\ge 1-d}$ and  $N\in\D^{\le 0}\cap \nu\D^{\ge 1-d}$. For $1\le i\le d$, if $\Hom_{\D}(S[i],N)=0=\Hom_{\D}(N, S[i-d])$, then $N=0$.
\end{Lem}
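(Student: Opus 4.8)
\textbf{Proof proposal for Lemma \ref{Lem:perp}.}

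The plan is to exploit the fact that $S$ is an SMC and that $N$ lives in the same ``truncated'' region $\D^{\le 0}\cap\nu\D^{\ge 1-d}$, so that $N$ can be resolved by shifts of the summands of $S$, but only by a controlled range of shifts. First I would use that $\cal H_S=\Filt S$ is the heart of the bounded $t$-structure $(\D^{\le 0}_S,\D^{\ge 0}_S)$; since $N\in\D^{\le 0}\subset\D^{\le 0}_S$ (note $S\in\D^{\le 0}$ forces $\D^{\le 0}_S\supset\D^{\le 0}$, or rather one should check $N\in\D^{\le 0}_S$ directly using $N\in\D^{\le 0}$ together with $S$ generating via $\Filt$) I can take the cohomology of $N$ with respect to this $t$-structure and express $N$ as a finite iterated extension of objects $H^j_S(N)[-j]$ with $H^j_S(N)\in\cal H_S$. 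The key point will be to bound the indices $j$ that actually occur: the hypothesis $N\in\D^{\le 0}$ gives an upper bound and $N\in\nu\D^{\ge 1-d}$ gives a lower bound, so that only $j$ in a range of length roughly $d$ contributes, say $0\le j\le d-1$ after suitable normalization (one needs $\D^{\ge 0}_S\subset\nu\D^{\ge 1-d}$, which is precisely the content of ``$S\in\nu\D^{\ge 1-d}$'' combined with $\Filt$).

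Next I would observe that each nonzero $H^j_S(N)\in\cal H_S=\Filt S$ is itself built from the simple objects of the heart, i.e. the summands of $S$, so that $\Hom_{\D}(S[i],N)=0$ for $1\le i\le d$ controls the ``upper half'' of the resolution and $\Hom_{\D}(N,S[i-d])=0$ for $1\le i\le d$ — equivalently $D\Hom_{\D}(N,S[i-d])$, which by the relative Serre/Nakayama duality available on $\D^{\bb}(H)$ equals $\Hom_{\D}(S[?],\nu N)$ — controls the ``lower half''. Concretely, I expect the argument to run by downward (or upward) induction on the length of the $\cal H_S$-filtration of $N$: pick the top (or bottom) nonzero cohomology $H^j_S(N)$, write $0\ne X\in\add S$ for one of its composition factors, and use one of the two vanishing hypotheses together with the fact that $X[-j]$ is a summand of the relevant truncation to derive $\Hom_{\D}(S[\text{something}],N)\ne0$ or $\Hom_{\D}(N,S[\text{something}])\ne0$ in the forbidden range, a contradiction unless $N=0$.

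The main obstacle I anticipate is bookkeeping the index range precisely: one must verify that the hypotheses $N\in\D^{\le 0}$ and $N\in\nu\D^{\ge 1-d}$, translated through the $t$-structure $(\D^{\le 0}_S,\D^{\ge 0}_S)$ attached to $S$ (rather than the standard one), really do confine the $\cal H_S$-cohomology of $N$ to degrees $0,1,\dots,d-1$ — this uses the inclusions $\D^{\le 0}_S\supset\D^{\le 0}$ and $\D^{\ge 0}_S\subset\nu^{-1}\D^{\ge 1-d}$ wait, rather $\D^{\ge 1-d}_S\subset\nu\D^{\ge 1-d}$-type containments, which follow from $S\in\D^{\le 0}\cap\nu\D^{\ge 1-d}$ and the fact that $\Filt$ is monotone. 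Once that range is pinned down, the two Hom-vanishing hypotheses are seen to cover exactly the Ext-groups $\Hom_{\D}(X[-j'],X'[-j])$ needed to split off the extreme layer of the filtration, and an induction on filtration length finishes the proof; I would also invoke Lemma \ref{Lem:hom}-style duality (Serre duality on $\D^{\bb}(H)$ via $\nu$) to rewrite $\Hom_{\D}(N,S[i-d])$ in a form directly comparable to the cohomological layers. I expect no deep input beyond these, but getting the shifts exactly right — in particular making sure the two forbidden ranges $1\le i\le d$ on each side together with degree $0\le j\le d-1$ leave no layer ``uncovered'' — is where care is required.
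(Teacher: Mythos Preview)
There is a genuine gap. Your key containment $\D^{\le 0}\subset\D_S^{\le 0}$ is backwards: since $\D_S^{\le 0}=\Filt(S[\ge\hs0])$ and $S\subset\D^{\le 0}$, one has $\D_S^{\le 0}\subset\D^{\le 0}$, not the reverse (this is exactly Lemma~\ref{Lem:1}\,(1)$\Rightarrow$(3)). Likewise $\D_S^{\ge 0}\subset\nu\D^{\ge 1-d}$, again the wrong direction for your purpose. What one actually gets is $\mod H\subset\D_S^{[0,d]}$ (equation~\eqref{mod} in the paper), whence $N\in\D^{[-d,0]}\subset\D_S^{[-d,d]}$ --- a window of width $2d+1$, not $d$.

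With this correct range the two hypotheses no longer cover everything. From $N\in\D_S^{\ge -d}$ together with $\Hom_\D(S[i],N)=0$ for $1\le i\le d$ you do obtain $N\in\D_S^{\ge 0}$. But on the other side $N\in\D_S^{\le d}$ means $\Hom_\D(N,S[j])=0$ for $j\le -d-1$, and the hypothesis gives it for $1-d\le j\le 0$; the single value $j=-d$ is missing, so you cannot conclude $N\in\D_S^{\le -1}$. Your induction/peeling scheme runs into the same obstruction: the top $\H_S$-cohomology of $N$ could sit in degree $d$, producing a nonzero map $N\to X[-d]$ with $X\in S$, which neither hypothesis forbids. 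The paper supplies the missing idea: since $DA\in\nu\D^{\le 0}\subset\D_S^{\le d-1}$ and $DA\in\mod H\subset\D_S^{\ge 0}$, one has $DA\in\D_S^{[0,d-1]}$, so the second hypothesis forces $\Hom_\D(N,DA)=0$, i.e.\ $\h^0(N)=0$. This pushes $N$ into $\D^{[-d,-1]}\subset\D_S^{[-d,d-1]}$, and now the window of vanishing closes exactly. This $DA$ step is not a bookkeeping detail; it is the crux, and it is absent from your plan.
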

\begin{proof}
By \eqref{nu} and Lemma \ref{Lem:1}, we have 
\begin{equation} \label{mod}
\H=\mod A \subset \nu \D^{\le 1}\cap \D^{\ge 0} \subset \D^{\le d}_{S}\cap \D_{S}^{\ge 0}. 
\end{equation}
Since $DA\in \nu\D^{\le 0} \subset \D^{\le d-1}_{S}$  by   Lemma \ref{Lem:1},  
%and $S[-d]\subset \nu\D^{\ge 1}$, then $\Hom_{\D}(DA, S[-d])=0$ and 
 $DA\in \D_{S}^{[0,d-1]}$ by \eqref{mod}. 
Note that $\Hom_{\D}(N, S[i-d])=0$ for $1\le i\le d$, 
then $\Hom_{\D}(N, X)=0$ for any $X\in \D_{S}^{[0, d-1]}$. In particular,
 we have $\Hom_{\D}(N, DA)=0$ (that is $\h^{0}(N)=0$), and moreover, $N\in \D^{\le -1}\cap \nu \D^{\ge 1-d}\subset \D^{[-d,-1]}$.
%\subset \D_{S}^{[-d, d-1]}$ by \eqref{mod}. 
 Since $\D^{[-d,-1]}=\Filt \{\H[i]\mid 1\le i\le d\}$, then \eqref{mod} implies that $N\in \D^{[-d,-1]}\subset \D_{S}^{[-d, d-1]}$.
 Recall that $\D_{S}^{\le d-1}={}^{\perp}(S[\le \hs -d])$ and $\D_{S}^{\ge -d}=(S[>\hs d])^{\perp}$, thus $N \in  \D^{\le -1}_{S} \cap \D^{\ge 0}_{S}=0$ by the orthogonality conditions on $N$.
\end{proof}

We denote by $(-d)\text{-}\CYconf\C$ the set of $(-d)$-Calabi-Yau configurations of $\C$.

\begin{Prop}\label{Prop:well-defined}
Let $H$ be a hereditary $k$-algebra. Then the map  
\begin{equation} \label{pi}
(\SMC\D)\cap \D^{[-d,0]}_{-} \xlongrightarrow{\pi}
%\{(-d)\text{-CY configurations  in } \cal C_{-d}(kQ)\}, 
(-d)\text{-}\CYconf\C.\end{equation}
is well-defined.
\end{Prop}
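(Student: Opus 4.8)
The plan is to take an SMC $S$ of $\D$ lying in $\D^{[-d,0]}_{-}=\D^{\le 0}\cap\nu\D^{\ge 1-d}$, push it forward along $\pi\colon\D\to\C$, and verify conditions (1), (2) and the orthogonality vanishing $\bigcap_{j=0}^{d-1}{}^{\perp}\pi(S)[-j]=0$ from Definition \ref{Def:CY}. The key computational tool is Lemma \ref{Lem:hom}, which for $X,Y\in\D^{[-d,0]}_{-}$ and $0\le i\le d$ gives
\[
\Hom_{\C}(X,Y[-i])=\Hom_{\D}(X,Y[-i])\oplus D\Hom_{\D}(Y,X[i-d]).
\]
First I would record that $\pi$ restricted to $\ind\D^{[-d,0]}_{-}$ is a bijection onto $\ind\C$ by \eqref{ind}, so that the elements of $S$ remain pairwise non-isomorphic indecomposables in $\C$ and it makes sense to view $S$ itself as a subset of $\C$ (as the excerpt does, writing $\pi(X)$ as $X$).

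Next, for conditions (1) and (2) of Definition \ref{Def:CY}: take $X,Y\in S$ and $0\le i\le d-1$, and apply the formula above. On the right-hand side, $\Hom_{\D}(X,Y[-i])$ vanishes for $i>0$ because $S$ is an SMC (property (2) of Definition \ref{Def:SMC}, rewritten as $\Hom_{\D}(X[>0],Y)=0$), and for $i=0$ it gives $\Hom_{\D}(X,Y)$, which is $0$ for $X\ne Y$ and a division algebra for $X=Y$. The second summand $D\Hom_{\D}(Y,X[i-d])$ vanishes when $i-d<0$, i.e. for $0\le i\le d-1$, again by the SMC condition on $S$. Hence for $X\ne Y$ in $S$ we get $\Hom_{\C}(X,Y)=0$, $\End_{\C}(X)=\End_{\D}(X)$ is a division algebra, and $\Hom_{\C}(X,Y[-i])=0$ for $1\le i\le d-1$ — i.e.\ $\Hom_{\C}(X[i],Y)=0$ using the $(-d)$-CY property $\SSS=[-d]$ of $\C$ (or directly from the same formula with $i$ and $d-i$ swapped). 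This is exactly (1) and (2).

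For the vanishing $\bigcap_{j=0}^{d-1}{}^{\perp}\pi(S)[-j]=0$ in $\C$: suppose $M\in\C$ satisfies $\Hom_{\C}(M,S[-j])=0$ for all $0\le j\le d-1$. By \eqref{ind} write $M=\pi(N)$ for a unique $N\in\D^{[-d,0]}_{-}$ (up to adding objects $\pi$ kills — one must check $\pi$ is essentially surjective and the fibre is controlled, which \eqref{ind} gives on indecomposables, so I may as well take $N\in\D^{\le 0}\cap\nu\D^{\ge 1-d}$). Applying Lemma \ref{Lem:hom} to $N$ and each $S_k\in S$ with $i=j$ for $0\le j\le d-1$, the hypothesis $\Hom_{\C}(N,S_k[-j])=0$ forces both summands to vanish:
\[
\Hom_{\D}(N,S_k[-j])=0\quad\text{and}\quad\Hom_{\D}(S_k,N[j-d])=0,\qquad 0\le j\le d-1.
\]
I then want to feed this into Lemma \ref{Lem:perp}, which says precisely that an object $N\in\D^{\le 0}\cap\nu\D^{\ge 1-d}$ killed on both sides by the shifts $S[i]$ and $S[i-d]$ for the relevant range of $i$ must be zero. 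The indexing has to be matched carefully: Lemma \ref{Lem:perp} asks for $\Hom_{\D}(S[i],N)=0=\Hom_{\D}(N,S[i-d])$ for $1\le i\le d$, which upon reindexing $i=d-j$ (so $j$ runs $0$ to $d-1$) becomes $\Hom_{\D}(N,S[-j])=0=\Hom_{\D}(S[-j],N[-d])$, equivalently $\Hom_{\D}(S,N[j-d])=0$; this is what we have. Hence $N=0$, so $M=0$, giving the required vanishing. Therefore $\pi(S)$ is a $(-d)$-CY configuration of $\C$, and the map \eqref{pi} is well-defined.

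The main obstacle I anticipate is bookkeeping rather than conceptual: getting the shift indices in Lemma \ref{Lem:hom} and Lemma \ref{Lem:perp} to line up (the two lemmas are stated with slightly different conventions — $0\le i\le d$ versus $1\le i\le d$ — and one must be careful that the "boundary" cases $j=0$ and the use of the $k$-dual in the second summand are handled correctly), and confirming that picking a representative $N\in\D^{[-d,0]}_{-}$ of $M\in\C$ is legitimate and that the orthogonality in $\C$ really does transport to the two-sided orthogonality in $\D$ that Lemma \ref{Lem:perp} needs. Once the indices are aligned, each verification is a one-line application of the two lemmas.
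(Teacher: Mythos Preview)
Your proposal is correct and follows essentially the same route as the paper's proof: both verify conditions (1) and (2) of Definition \ref{Def:CY} directly from Lemma \ref{Lem:hom} applied to pairs $X,Y\in S$, and both establish the orthogonality condition by lifting an indecomposable $M\in\C$ to $N\in\D^{[-d,0]}_{-}$ via \eqref{ind}, extracting the two-sided vanishing conditions from Lemma \ref{Lem:hom}, and invoking Lemma \ref{Lem:perp} to conclude $N=0$. Your index bookkeeping is accurate; the only cosmetic difference is that the paper takes $M$ indecomposable from the outset, which removes your parenthetical worry about choosing a representative.
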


\begin{proof}
 Let $S$ be an SMC contained in $\D^{[-d,0]}$. 
% We may regard $S$ as a set of objects of  $\cal C_{-d}(kQ)$ through the functor $\pi$. 
 We show $S$ is a $(-d)$-CY configuration in $\C$. Let $X, Y\in S$ and  $0\le i<d$. By Lemma \ref{Lem:hom}, we have 
 \begin{align*} \Hom_{\C}(X, Y[-i])=\Hom_{\D}(X, Y[-i])\op D\Hom_{\D}(Y, X[i-d]). \end{align*} 
Immediately  $S$ satisfies  the conditions (1) and (2) in Definition \ref{Def:CY}. 

 It remains to check that $\bigcap\limits_{j=0}^{d-1}{}^{\perp}S[-j]=0$. 
Let $M\in \C$ be an indecomposable object satisfying  $\Hom_{\C}(M, S[-j])=0$ for any $0\le j\le d-1$. By \eqref{ind}, there exists an indecomposable object $N\in \D^{[-d,0]}_{-}$, such that $\pi(N)=M$. By   Lemma \ref{Lem:hom}, 
 we have   $\Hom_{\D}(N, S[i-d])=0$ and $\Hom_{\D}(S[i], N)=0$ for any $1\le i\le d$. Then $N=0$ by Lemma \ref{Lem:perp}.
\end{proof}

We are ready to prove Theorem \ref{Thm:bijA}, which is stated as follows.

\begin{Thm}\label{Thm:bij}
Let $H$ be a hereditary $k$-algebra of Dynkin type and $d\ge 1$. Then there are  bijections
\begin{eqnarray}
(\silt \cal D^{\rm b}(H))\cap \cal D^{[1-d,0]} 
%\{ \text{silting objects  in $\D^{\le 0}\cap \D^{\ge 1-d}$} \}
&\xleftrightarrow{1:1} &
%\{\text{SMCs in } \D^{\le 0}\cap \nu \D^{\ge 1-d}\}, 
(\SMC\D^{\rm b}(H))\cap \D^{[-d, 0]}_{-}, \label{silttoSMCA}
\\
&\xleftrightarrow{1:1}& 
%\{(-d)\text{-CY configurations  in } \cal C_{-d}(kQ)\}, 
\SMS\cal C_{-d}(H), \label{SMCtoSMSA}
\end{eqnarray}
where the map \eqref{SMCtoSMSA} is induced by the natural functor $\cal D^{\rm b}(H) \xra{\pi} \cal D^{\rm b}(H)/\nu[d].$
% remark on (1.1)

%\[\xymatrix{ (\silt \cal D^{\rm b}(kQ))\cap \cal D^{[1-d,0]}  \ar@{<->}[r]^<<<<<{1:1} &  \{\text{SMCs in } \D^{\le 0}\cap \nu \D^{\ge 1-d}\} \\  \ar@{<->}[r]^<<<<<{1:1} &   \{(-d)\text{-CY conf. in } \cal C_{-d}(kQ)\}}\]
\end{Thm}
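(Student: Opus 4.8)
The plan is to establish the two bijections separately, using the general silting-$t$-structure machinery for \eqref{silttoSMCA} and a direct orbit-category argument for \eqref{SMCtoSMSA}. For \eqref{silttoSMCA}, I would first observe that a finite-dimensional hereditary algebra $H$ of Dynkin type is in particular Iwanaga-Gorenstein (indeed hereditary), so Corollary \ref{Cor:Gor} applies directly with $A = H$: it already gives a poset isomorphism $(\silt\per H)\cap\D^{[1-d,0]} \xleftrightarrow{\cong} (\SMC\D^{\bb}(H))\cap\D^{[-d,0]}_{-}$. Since $H$ has finite global dimension, $\per H = \D^{\bb}(H)$, so $(\silt\per H)\cap\D^{[1-d,0]}$ is exactly $(\silt\D^{\bb}(H))\cap\D^{[1-d,0]}$, and \eqref{silttoSMCA} follows. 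The only point requiring care is to confirm that the subcategory $\D^{\le 0}\cap\nu^{-1}\D^{\ge -d}$ appearing implicitly in Corollary \ref{Cor:Gor} via $\cal Z = \D^{\ge 1-d}$ matches $\D^{[-d,0]}_{-} = \D^{\le 0}\cap\nu\D^{\ge 1-d}$ under the relative Serre functor $\nu$ — that is, that the ``$\SSS\cal Z$'' on the heart side is $\nu\D^{\ge 1-d}$, which is precisely how $\D^{[-d,0]}_{-}$ was defined in the introduction.

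For the bijection \eqref{SMCtoSMSA}, the strategy is to show that $\pi$ induces a well-defined injective map from $(\SMC\D)\cap\D^{[-d,0]}_{-}$ onto $\SMS\C_{-d}(H)$. Well-definedness: Proposition \ref{Prop:well-defined} already shows $\pi$ sends such an SMC to a $(-d)$-CY configuration of $\C$, and by the remark after Definition \ref{Def:CY} (citing \cite[Proposition 2.13]{CSP}), in the Dynkin hereditary case a $(-d)$-CY configuration of $\C_{-d}(H)$ is automatically a $d$-SMS, so the target is genuinely $\SMS\C_{-d}(H)$. Injectivity: if $S, T\in(\SMC\D)\cap\D^{[-d,0]}_{-}$ have $\pi(S) = \pi(T)$ up to isomorphism, then since \eqref{ind} is a bijection on indecomposables, the indecomposable summands of $S$ and $T$ lift uniquely to $\D^{[-d,0]}_{-}$, hence $S = T$ in $\D$. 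Surjectivity is the substantive step: given a $d$-SMS $\bar S$ of $\C$, lift each indecomposable summand via \eqref{ind} to get $S\subset\D^{[-d,0]}_{-}$, and one must check $S$ is an SMC of $\D$. Conditions (1) and (2) of Definition \ref{Def:SMC} for $S$ follow from the corresponding $d$-SMS conditions for $\bar S$ together with Lemma \ref{Lem:hom} (which splits $\Hom_\C$ into the relevant $\Hom_\D$ pieces, forcing them all to vanish in the right degrees); the generation condition $\D = \thick S$ requires translating the $d$-SMS filtration condition $\C = \Filt(\bar S[-j] \mid 0\le j\le d-1)$ back into $\D$, which I expect follows from Lemma \ref{Lem:perp} applied to a would-be nonzero object in $S^\perp$-type intersections, combined with boundedness of the standard $t$-structure.

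The main obstacle will be surjectivity in \eqref{SMCtoSMSA}, specifically verifying that the lift $S$ of a $d$-SMS actually generates $\D^{\bb}(H)$ as a thick subcategory (equivalently, is an SMC rather than merely a ``simple-minded collection of the right size with no extensions''). The natural route is: first use Lemma \ref{Lem:perp} to show that any indecomposable $N\in\D^{[-d,0]}_{-}$ with $\Hom_\D(S[i],N) = 0 = \Hom_\D(N,S[i-d])$ for $1\le i\le d$ must be zero — this is exactly the $(-d)$-CY configuration condition $\bigcap_{j}{}^\perp S[-j] = 0$ transported through $\pi$ and Lemma \ref{Lem:hom}, which holds because $\bar S$ is a $d$-SMS. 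Then one argues that this orthogonality-nondegeneracy, together with the bound $S\subset\D^{[-d,0]}$ and the known structure \eqref{equation:ind-} of the fundamental domain, forces $\thick S$ to contain enough indecomposables — in the hereditary Dynkin setting, where $\D^{\bb}(H)$ has only finitely many indecomposables and a well-understood AR structure, a counting/saturation argument should close the gap. Alternatively, and perhaps more cleanly, one may invoke Theorem \ref{Thm:appendix1A}: since $\Hom_{\le 0}$-configurations coincide with SMCs for hereditary $H$, and the left-hand side $(\silt\D)\cap\D^{[1-d,0]}$ is already in bijection with $(\SMC\D)\cap\D^{[-d,0]}_{-}$ via \eqref{silttoSMCA}, one can compose with \cite[Theorem 2.4]{BRT} to identify cardinalities and reduce surjectivity to the already-established well-definedness plus injectivity.
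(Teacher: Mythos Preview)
Your overall strategy matches the paper's: \eqref{silttoSMCA} via Corollary \ref{Cor:Gor}, well-definedness of \eqref{SMCtoSMSA} via Proposition \ref{Prop:well-defined} plus \cite[Proposition 2.13]{CSP}, and injectivity via the bijection \eqref{ind}. The verification of SMC conditions (1)--(2) for the lift $S$ using Lemma \ref{Lem:hom} is also correct in spirit, though note that Lemma \ref{Lem:hom} only controls $\Hom_{\D}(X,Y[-i])$ for $0\le i\le d$; the paper handles $i>d$ separately using $Y[-i]\in\nu\D^{\ge 1-d+i}\subset\D^{\ge 1}$ via \eqref{nu}.

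The genuine gap is in your generation argument $\D=\thick S$. Your first route is circular: Lemma \ref{Lem:perp} has $S\in\SMC\D$ as a \emph{hypothesis}, so you cannot invoke it to establish that $S$ is an SMC. Your third route (cardinality matching via Theorem \ref{Thm:appendixA} and \cite[Theorem 2.4]{BRT}) gives the size of the source $(\SMC\D)\cap\D^{[-d,0]}_{-}$, but you have no independent count of $\#\SMS\C_{-d}(H)$, so finiteness plus injectivity does not yield surjectivity. The paper's actual argument is the one you are missing: since $H$ is Dynkin, $\D$ is locally finite, hence $\cal S:=\thick S$ is functorially finite, and by \cite[Proposition 2.3]{IY1} there is a torsion pair $\D={}^{\perp}\cal S\perp\cal S$. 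One then shows ${}^{\perp}\cal S=0$ directly: any indecomposable $X\in{}^{\perp}\cal S$ may be shifted into $\mod H$, and then $\Hom_{\D}(X,S[-i])=0$ for all $i$ together with $\Hom_{\D}(S,X[i-d])=0$ for $0\le i<d$ (since $X[i-d]\in\D^{\ge 1}$) feed into Lemma \ref{Lem:hom} to give $\Hom_{\C}(X,S[-i])=0$ for $0\le i\le d-1$. The $d$-SMS property of $\bar S$ then forces $X=0$ in $\C$, hence in $\D$. The point you were circling around---transporting orthogonality to $\C$ and using the SMS condition there---is correct, but it must be applied to an object of ${}^{\perp}(\thick S)$ produced by the torsion pair, not via Lemma \ref{Lem:perp}.
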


\begin{proof}
The bijection \eqref{silttoSMCA} follows directly from Corollary \ref{Cor:Gor}. The  map  \eqref{SMCtoSMSA} is well-defined by Proposition \ref{Prop:well-defined}.
Since this is injective by \eqref{ind}, it suffices to show that \eqref{SMCtoSMSA} is surjective.

 Let $S$ be any $d$-SMS of $\C$. 
We also denote by $S$ the preimage $\pi^{-1}(S)$
of $S$ via the bijection \eqref{ind}.
We claim $S$ is an SMC of $\D$. For $X\in S$, by Lemma \ref{Lem:hom} and $\Hom_{\D}(X, X[<\hs 0])=0$,  we have 
$ \End_{\cal C}(X)=\End_{\D}(X)$,
thus $\End_{\D}(X)$ is a division ring. Let $X,Y\in S$.
 We have  
\[\Hom_{\cal C}(X, Y[-i])\supset \Hom_{\D}(X, Y[-i]),\]
 for any $i\in \Z$. 
For $0\le i\le d-1$ and $X\not=Y$, the left hand side is $0$, so is the right hand side.
%Also we have $\Hom_{\D^{\rm b}(kQ)}(X, Y[-i])=0$ for $0<i<d$ by  Definition \ref{Def:CY}(2).
 Now we show $\Hom_{\D}(X, Y[-d])=0$. This is clear if $X=Y$.
 If $X\not =Y$, then by Lemma \ref{Lem:hom},
\[ 0=\Hom_{\C}(Y,X) = \Hom_{\D}(Y,X)\op D\Hom_{\D}(X, Y[-d]), \]
and hence $\Hom_{\D}(X, Y[-d])=0$.
For $i>d$,  we have
 \[Y[-i]\in \nu \D^{\ge1-d+i}\subset \nu \D^{>1}\subset \D^{\ge 1}\]
 by \eqref{nu}. Thus  $\Hom_{\D}(X, Y[<\hs -d])=0$.

 It remains to show  $\D=\thick S$.
Since $\D$ is locally finite,  $\cal S=\thick S$ is functorially finite in $\D$. Thus we have a torsion pair $\D={}^{\perp}\cal S\perp \cal S$ by \cite[Proposition 2.3]{IY1}.
Thus it suffices to show  ${}^{\perp}\cal S=0$. Let  $X\in {}^{\perp}\cal S$ be an indecomposable object. 
Since $H$ is hereditary, $\D=\add(\cal H[i]\mid i\in \Z)$.
We may assume $X\in\cal H$. 
Then $\Hom_{\D}(X, S[-i])=0$ for all $i\in\Z$. Moreover, for any $0\le i<d$, we have $X[i-d]\in \D^{\ge 1}$, and hence 
%There exists $0\le i<d$ such that $\Hom_{\cal C_{-d}(kQ)}(X, S[-i])\not=0$.
$\Hom_{\D}(S,X[i-d])=0$.   Since $X, S\in \D^{[-d,0]}_{-}$, we have $\Hom_{\C}(X, S[-i])=0$  by Lemma \ref{Lem:hom}. Since $S$ is a $d$-SMS,   $X=0$.
Thus ${}^{\perp}\cal S=0$ as desired.
\end{proof}

%\begin{Thm}\cite{BRT2, Z} \label{Thm:BRT}
%Let $Q$ be an acyclic quiver  and let $d\ge 1$. There is a bijection
%\begin{eqnarray*}
%\silt \cal D^{\rm b}(kQ))\cap \D^{\le 0}\cap \nu^{-1}\D^{\ge -d}
%\{ \text{silting objects  in $\D^{\le 0}\cap \D^{\ge 1-d}$} \}
%& \xleftrightarrow{1:1} &
%\{\text{SMCs in } \D^{\le 0}\cap \nu \D^{\ge 1-d}\},  
%(d+1)\text{-}\ctilt \C_{d+1}(kQ)\\
%&  \xleftrightarrow{1:1} & 
%\{ \text{maximal simplices in $\Delta^{d} (\Phi)$}\}
%\maxsim\D^{d}(\Phi)
%\end{eqnarray*}
%\end{Thm}

Theorem \ref{Thm:FCA} is clear now, which is stated as follows.

\begin{Thm}\label{Thm:FC}
Let $H$ be a hereditary $k$-algebra of  Dynkin type  and let $d\ge 1$.
% (1) acyclic 1:1 (2) Dynkin FC
% bij with d-clusters
\begin{enumerate}[\rm (1)]
 \item There is a bijection 
 \[ \SMS\cal C_{-d}(H)\xleftrightarrow{1:1} \pmaxsim\Delta^{d}(\Phi).\]
 %between $(-d)$-Calabi-Yau configurations in $\cal C_{-d}(kQ)$ and positive  parts of the generalized cluster complex  $\Delta^{d}(\Phi)$.
 \item
We have 
$\#\SMS\cal C_{-d}(H)=C_{d}^{+}(W)$,
%the number of $(-d)$-Calabi-Yau configurations in $\cal C_{-d}(kQ)$ is $C_{d}^{+}(W)$, 
where $W$ is the Weyl group of $H$. 
\end{enumerate}
    \end{Thm}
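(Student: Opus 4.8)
The plan is to derive Theorem~\ref{Thm:FC} from Theorem~\ref{Thm:bij}, the cluster-combinatorial bijections of \cite{Z} and \cite{BRT2} recorded in the diagram of Section~\ref{section:1.1}, and one enumerative input on generalized cluster complexes. Theorem~\ref{Thm:bij} already supplies $\SMS\C_{-d}(H)\xleftrightarrow{1:1}(\silt\D^{\rm b}(H))\cap\D^{[1-d,0]}$, so for part~(1) it suffices to identify $(\silt\D^{\rm b}(H))\cap\D^{[1-d,0]}$ with $\pmaxsim\Delta^{d}(\Phi)$, and for part~(2) to count this set.

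For the identification, I would begin with the composite bijection
$\maxsim\Delta^{d}(\Phi)\xleftrightarrow{1:1}\ctilt\C_{d+1}(H)\xleftrightarrow{1:1}(\silt\D^{\rm b}(H))\cap\D^{[1-d,0]}_{+}$
of \cite{Z} and \cite{BRT2}. Via the fundamental-domain functor $\D^{[1-d,0]}_{+}\to\C_{d+1}(H)$ and the description of $\ind(\mod H)$ by $\Phi_{+}$, this bijection is arranged so that, on indecomposable summands, the colored positive roots $\Phi_{+}\times\{1,\dots,d\}$ correspond to $\bigcup_{i=0}^{d-1}\ind(\mod H)[i]$ and the negative simple roots $\{(-\alpha_{i},1)\mid i\in I\}$ correspond to the shifted projectives $\ind(\proj H)[d]$; this is built into \eqref{equation:ind+} and the constructions of \cite{BMRRT, BRT2, Z}. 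On the other hand, unwinding $\D^{[1-d,0]}=\D^{\le 0}\cap\D^{\ge 1-d}$ gives $\ind\D^{[1-d,0]}=\bigcup_{i=0}^{d-1}\ind(\mod H)[i]$, so that $\ind\D^{[1-d,0]}_{+}$ is the disjoint union of $\ind\D^{[1-d,0]}$ and $\ind(\proj H)[d]$. Hence a basic silting object $P\in\D^{[1-d,0]}_{+}$ lies in $\D^{[1-d,0]}$ precisely when none of its indecomposable summands is a shifted projective, equivalently precisely when the maximal simplex corresponding to $P$ contains no negative simple root. Therefore the bijection of \cite{Z} and \cite{BRT2} restricts to $(\silt\D^{\rm b}(H))\cap\D^{[1-d,0]}\xleftrightarrow{1:1}\pmaxsim\Delta^{d}(\Phi)$, and composing with Theorem~\ref{Thm:bij} proves~(1).

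For part~(2), in view of~(1) it remains to show $\#\pmaxsim\Delta^{d}(\Phi)=C_{d}^{+}(W)$, i.e. that the number of maximal simplices of $\Delta^{d}(\Phi)$ using only colored positive roots equals $\prod_{i=1}^{n}\frac{dh+e_{i}-1}{e_{i}+1}$. This is a known enumerative property of generalized cluster complexes, parallel to $\#\maxsim\Delta^{d}(\Phi)=C_{d}(W)$; I would cite Fomin--Reading \cite{FR} (see also \cite{A}). The explicit values in Figure~\ref{figure} then follow by substituting the Coxeter number $h$ and exponents $e_{1},\dots,e_{n}$ of the relevant Weyl group.

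Granting Theorem~\ref{Thm:bij} and the results of \cite{Z, BRT2}, the only genuine work here is the bookkeeping in the second paragraph: verifying that, in the concrete constructions on the silting side \cite{BRT2} and the cluster-complex side \cite{Z}, the summands $\ind(\proj H)[d]\subset\D^{[1-d,0]}_{+}$ are exactly those matched with the negative simple roots, and that this is compatible with the two fundamental-domain descriptions \eqref{equation:ind+} and \eqref{equation:ind-}. This is elementary but convention-sensitive, and is the step I would be most careful about; pinning down the precise reference for the positive-part count in~(2) is the only other point needing attention.
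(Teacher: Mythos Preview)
Your proposal is correct and follows essentially the same approach as the paper: combine Theorem~\ref{Thm:bij} with the bijections of \cite{Z} and \cite{BRT2}, observe that under these bijections the negative simple roots correspond precisely to the summands in $\ind(\proj H)[d]$ so that the restriction to $\D^{[1-d,0]}\subset\D^{[1-d,0]}_{+}$ matches $\pmaxsim\Delta^{d}(\Phi)$, and then invoke Fomin--Reading for the count. The paper phrases the restriction step via an intermediate set $\ctilt^{+}\C_{d+1}(H)$ of cluster tilting objects with no summand in $(\proj H)[d]$, and gives the precise reference \cite[Proposition~12.4]{FR} for $\#\pmaxsim\Delta^{d}(\Phi)=C_{d}^{+}(W)$, but otherwise your outline and the paper's proof coincide.
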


%{\new This proof should be written as a proper proof and it should follow statement of the theorem.}
\begin{proof}
%(1) follows from Theorems \ref{Thm:bij} and \ref{Thm:BRT}.  (2) is by (1) and \cite{FR}.
(1) 
By \cite[Theorem 5.7]{Z} and \cite[Proposition 2.4]{BRT2}, we have bijections
\[     \maxsim\Delta^{d} (\Phi) 
\xra{\sim}   \ctilt \C_{d+1}(H)  \xra{\sim}  (\silt \D^{\rm b}(H)) \cap \D^{[1-d,0]}_{+},
 \] 
 which restrict to a bijection
 \[\pmaxsim \Delta^{d} (\Phi)  \xra{\sim} \ctilt^{+} \C_{d+1}(H)
 \xra{\sim} (\silt \D^{\rm b}(H))\cap \D^{[1-d,0]},  \]
 where $\ctilt^{+} \C_{d+1}(H)$ consists of $P\in \ctilt \C_{d+1}(H)$, which does not have a non-zero common direct summand with $(\proj H)[d]$.
 Combine with  Theorem \ref{Thm:bij}, we get a bijection
 $
%\{(-d)\text{-CY configurations  in } \cal C_{-d}(kQ)\}, 
\pmaxsim\Delta^{d}(\Phi) \xra{\sim}
\SMS\cal C_{-d}(H).
$
Then (1) is true.
 
(2) By \cite[Proposition 12.4]{FR},  $\#\pmaxsim\Delta^{d}(\Phi)=C_{d}^{+}(W)$. Then (2) follows by (1).
%we have a bijection between the sets $(\silt \cal D^{\rm b}(kQ))\cap \cal D^{[1-d,0]}$ and $\SMS\cal C_{-d}(kQ)$. It is known that silting objects in $\cal D^{[1-d,0]}\subset \cal D^{\rm b}(kQ)$ bijectively correspond to $(d+1)$-cluster tilting objects in $\cal D^{[1-d,0]}\subset \cal C_{d+1}(kQ)$ (see \cite{BRT2}), which bijectively corresponds to the positive parts of generalized cluster complex $\Delta^{d}(\Phi)$ \cite[Theorem 5.7]{Z}. Then by \cite{FR} we have  $ \#\SMS\cal C_{-d}(kQ)=C_{d}^{+}(W). $
\end{proof}

Let $H$ be a hereditary $k$-algebra. 
%Let $\D$ be the bounded derived category $\D^{\bb}(H)$. 
Recall that \cite{BRT} introduced the notion of 
\emph{$\Hom_{\le 0}$-configurations} of $\D^{\rm b}(H)$ {(see Definition \ref{Def:BRT})}, and  they gave a bijection between silting objects and $\rm{Hom}_{\le 0}$-configurations in \cite[Theorem 2.4]{BRT}.  Combining their 
result and a general result Theorem \ref{Thm:appendix2} on SMC-exceptional sequence,  we prove the following result.

\begin{Thm}\label{Thm:appendixA}
Let $H$ be a hereditary $k$-algebra. Then $\rm{Hom}_{\le 0}$-configurations of $\D^{\rm b}(H)$ are precisely SMCs  of $\D^{\rm b}({H})$.
\end{Thm}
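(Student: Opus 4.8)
The plan is to prove the asserted set equality by establishing the two inclusions, after first clearing away the formal part. A $\mathrm{Hom}_{\le 0}$-configuration $X=X_{1}\oplus\cdots\oplus X_{n}$ automatically satisfies conditions (1) and (2) of Definition \ref{Def:SMC}: each exceptional indecomposable $X_{i}$ has a division endomorphism ring, $\Hom_{\D^{\rm b}(H)}(X_{i},X_{j})=0$ for $i\neq j$ is (H2), and $\Hom_{\D^{\rm b}(H)}(X_{i}[t],X_{j})=0$ for $t>0$ is (H3) rewritten. Conversely, (H2) and (H3) of Definition \ref{Def:BRT} are exactly conditions (1) and (2) of Definition \ref{Def:SMC} restated. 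So the theorem reduces to two points: (a) every $\mathrm{Hom}_{\le 0}$-configuration generates $\D^{\rm b}(H)$ as a thick subcategory, i.e. satisfies condition \ref{Def:SMC}(3); and (b) every SMC of $\D^{\rm b}(H)$ consists of exactly $n$ exceptional indecomposables, where $n$ is the number of simple $H$-modules, i.e. satisfies (H1), and satisfies the acyclicity condition (H4).

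Point (a) is immediate from \cite[Theorem 2.4]{BRT}, since every $\mathrm{Hom}_{\le 0}$-configuration is the one attached to a silting object and therefore generates; I would also record a direct argument using that $H$ is hereditary. Writing $X_{i}\cong M_{i}[\ell_{i}]$ with $M_{i}\in\mod H$ indecomposable, one has $\Hom_{\D^{\rm b}(H)}(X_{i},X_{j}[t])=\Ext^{t+\ell_{j}-\ell_{i}}_{H}(M_{i},M_{j})$, which vanishes unless $t+\ell_{j}-\ell_{i}\in\{0,1\}$. If there were a cyclic chain $X_{i_{1}}\to X_{i_{2}}\to\cdots\to X_{i_{r}}\to X_{i_{1}}$ with $r\geq 2$, distinct summands, and $\Hom_{\D^{\rm b}(H)}(X_{i_{a}},X_{i_{a+1}}[t_{a}])\neq 0$ for some $t_{a}$, then $t_{a}\geq 1$ by (H2) and (H3), hence $\ell_{i_{a}}-\ell_{i_{a+1}}\in\{t_{a}-1,t_{a}\}$ is $\geq 0$; since $\sum_{a}(\ell_{i_{a+1}}-\ell_{i_{a}})=0$ with every summand $\leq 0$, all $\ell_{i_{a}}$ coincide and then all $t_{a}=1$, contradicting (H4). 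Thus the relation on $\{X_{1},\dots,X_{n}\}$ sending $X_{i}\to X_{j}$ (for $i\neq j$) whenever $\Hom_{\D^{\rm b}(H)}(X_{i},X_{j}[m])\neq 0$ for some $m\in\Z$ is acyclic; a topological sort turns $(X_{1},\dots,X_{n})$ into a complete exceptional sequence in $\D^{\rm b}(H)$, and such a sequence generates $\D^{\rm b}(H)$ as a thick subcategory.

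For point (b), the count is automatic: $S$ is the set of simple objects of the length heart $\Filt S$ of a bounded $t$-structure on $\D^{\rm b}(H)$, so $\#S$ equals the rank of $K_{0}(\D^{\rm b}(H))$, namely $n$. It then remains to show that each $S_{i}\cong M_{i}[\ell_{i}]$ is exceptional, equivalently $\Ext^{1}_{H}(M_{i},M_{i})=0$ (the only possibly nonzero higher self-extension group over a hereditary algebra), and that (H4) holds. This is precisely what the general result on SMC-exceptional sequences, Theorem \ref{Thm:appendix2}, delivers: it orders $S$ as an SMC-exceptional sequence $(S_{1},\dots,S_{n})$, whose terms are exceptional by definition and which satisfies $\Hom_{\D^{\rm b}(H)}(S_{j},S_{i}[m])=0$ for all $j>i$ and $m\in\Z$, so no cyclic chain of nonzero degree-one maps can occur. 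Equivalently, one may identify $S$ with the $\mathrm{Hom}_{\le 0}$-configuration of the silting object that corresponds to $S$ under Theorem \ref{Thm:KYbij}, via \cite[Theorem 2.4]{BRT}.

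The hard part is (b): the assertion that an SMC of $\D^{\rm b}(H)$ can be totally ordered into a complete exceptional sequence of exceptional objects. Everything else is either formal or, as in (a), a short consequence of the degree restriction $\Ext^{j}_{H}=0$ for $j\neq 0,1$ together with the acyclicity already encoded in (H4). Accordingly, the weight of the proof rests on Theorem \ref{Thm:appendix2}, which I would invoke here as a black box; checking the SMC axioms against (H1)--(H4) in both directions is then routine bookkeeping.
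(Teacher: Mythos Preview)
Your overall strategy is sound, and for direction (a) your argument is in fact more elementary than the paper's. The paper establishes the identity $\mathcal{A}=\mathcal{B}$ between SMC-exceptional sequences and exceptional sequences that are $\Hom_{\le 0}$-configurations by composing the two $\mu_{\rev}^{+}$ bijections (Theorem~\ref{Thm:appendix2} and \cite[Theorem~2.4]{BRT}), and then forgets the order on both sides via Proposition~\ref{Prop:appendix}(2) and \cite[Lemma~2.3]{BRT}. You instead show directly, using the degree constraint $\Ext^{j}_{H}=0$ for $j\notin\{0,1\}$ together with (H2)--(H4), that the relation ``$X_{i}\to X_{j}$ if some $\Hom(X_{i},X_{j}[m])\neq 0$'' is acyclic, hence a topological sort produces an exceptional sequence of length $n$, which is full since $H$ is hereditary. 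This is essentially a self-contained reproof of \cite[Lemma~2.3]{BRT} and avoids the mutation machinery for that direction.

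There is, however, a misattribution in (b). Theorem~\ref{Thm:appendix2} only asserts that $\mu_{\rev}^{+}$ is a bijection $\siltexc\T\to\SMCexc\T$; it does \emph{not} say that an arbitrary SMC of $\D^{\rm b}(H)$ can be ordered into an SMC-exceptional sequence. That statement is Proposition~\ref{Prop:appendix}(2), and its proof is precisely the chain you sketch in your ``equivalently'' sentence: pass from $S$ to its silting object $P$ via Theorem~\ref{Thm:KYbij}, order $P$ into a silting-exceptional sequence by \cite[Proposition~3.11]{AI}, apply $\mu_{\rev}^{+}$, and then use Lemma~\ref{Lem:appendix}(2) to verify that the resulting SMC-exceptional sequence is a reordering of $S$ (because both give the simples of the same heart). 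Once you replace the bare citation of Theorem~\ref{Thm:appendix2} by this argument---or by Proposition~\ref{Prop:appendix}(2)---your proof of (H1) and (H4) goes through, and the whole argument is complete.
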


To prove this theorem, we need the following observation.
\begin{Prop} \label{Prop:appendix}
Let $H$ be a hereditary algebra. Then 
\begin{enumerate}[\rm (1)]
\item For each $X_{1}\op\cdots\op X_{n} \in \silt {\D^{\rm b}(H)}$, the sequence $(X_{1}, \dots, X_{n})$ can be ordered  into a silting-exceptional sequence.
\item For each $\{S_{1}, \dots, S_{n}\} \in \SMC{\D^{\rm b}(H)}$, the sequence $(S_{1}, \dots, S_{n})$ can be ordered into a SMC-exceptional sequence.
 \end{enumerate}
\end{Prop}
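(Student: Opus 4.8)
The plan is to reduce both statements to a single combinatorial fact about finite collections of objects in $\D^{\bb}(H)$: if $(Z_1,\dots,Z_n)$ is a collection of pairwise non-isomorphic exceptional objects satisfying the relevant partial orthogonality, then it admits an ordering into an exceptional sequence. For part (1), recall that for a silting object $X_1\op\cdots\op X_n$ of $\D^{\bb}(H)$ with $H$ hereditary, each summand is of the form $M[i]$ for some indecomposable $H$-module $M$ (since $\D^{\bb}(H)=\add(\H[i]\mid i\in\Z)$), and each such summand is exceptional because $\Hom_{\D}(X_j,X_j[>0])=0$ together with hereditariness forces $\End_{\D}(X_j)$ to be a division ring. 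Moreover $\Hom_{\D}(X_j,X_k[t])=0$ for all $t<0$ by the silting condition combined with the fact that one can shift; so the only non-vanishing $\Hom$'s between distinct summands live in degrees $\ge 0$. For part (2), an SMC $\{S_1,\dots,S_n\}$ of $\D^{\bb}(H)$ consists of objects which, after the same reduction, are shifts of indecomposable modules; condition (1) of Definition \ref{Def:SMC} gives that each $\End_{\D}(S_j)$ is a division ring and $\Hom_{\D}(S_j,S_k)=0$ for $j\ne k$, while condition (2) gives $\Hom_{\D}(S_j[>0],S_k)=0$.

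The key step will be to produce the ordering. I would argue by induction on $n$, exhibiting at each stage an object in the collection which can be placed first (or last). For the silting case: since the collection generates $\D^{\bb}(H)=\thick\{X_1,\dots,X_n\}$ and $\per(H)$ has the same rank $n$ as the number of simples, there is at least one summand, say $X_j$, whose image in the Grothendieck group (equivalently, whose underlying module up to shift) can be taken to be a "source" — more precisely, consider the quiver on $\{1,\dots,n\}$ with an arrow $k\to\ell$ whenever $\Hom_{\D}(X_\ell,X_k[\text{something}])\ne0$; using the exceptionality and the fact that $H$ is hereditary so that only two consecutive degrees can contribute, one shows this relation is acyclic (this is essentially condition (H4) in Definition \ref{Def:BRT}, and for SMCs it follows from $\thick S=\D$ plus the orthogonality). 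An acyclic relation has a source, which we place first; restricting to the thick subcategory generated by the rest and invoking the inductive hypothesis finishes the argument. The same scheme works verbatim for SMCs, with $\Hom_{\D}(S_j[>0],S_k)$ playing the role of the positive-degree morphisms.

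The main obstacle I anticipate is establishing acyclicity of the auxiliary relation — equivalently, showing directly that no "cycle" of the type forbidden in (H4) can occur inside a silting object or an SMC. For SMCs this should follow from $\thick S=\D^{\bb}(H)$: a cycle $\Hom_{\D}(Y_i,Y_{i+1}[1])\ne0$, $\dots$, $\Hom_{\D}(Y_r,Y_1[1])\ne0$ among summands would, by composing extensions, produce a nonzero self-extension in some degree, contradicting exceptionality (using that $H$ is hereditary so extensions concatenate in a controlled way); one must be slightly careful that the composite is genuinely nonzero, which is where the division-ring hypothesis on endomorphisms is used. For the silting case the analogous statement is that a silting object cannot contain such a cycle, which again reduces to hereditariness plus $\Hom_{\D}(X,X[>0])=0$. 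Once acyclicity is in hand, the inductive extraction of a source and the verification that the remaining objects still generate the appropriate thick subcategory (they do, since removing a source from an exceptional-sequence candidate leaves a collection generating the right orthogonal piece) are routine. I would present the silting case first in full and then indicate that the SMC case is obtained by the same argument with degrees shifted.
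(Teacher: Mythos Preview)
Your approach to part (1) is essentially the content of \cite[Proposition 3.11]{AI}, which the paper simply cites; the acyclicity of the relation on summands of a silting object in a hereditary derived category is established there, so this part is fine.

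Part (2), however, has a genuine gap. Your plan is to show directly that the relation ``$i\prec j$ if $\Hom_{\D}(S_j,S_i[l])\ne 0$ for some $l$'' is acyclic, and you propose to do this by ``composing extensions'' to produce a nonzero self-extension of some $S_i$. This does not work. Since $H$ is hereditary, a cycle forces (after the degree-counting you sketch) all the $S_{i_k}$ to sit in the same cohomological degree and all the nonzero maps to be genuine $\Ext^1$'s between the underlying modules. But then composing two such classes lands in $\Ext^2=0$, so you get no self-extension at all; and there is no reason for a Yoneda-type composite to be nonzero in $\Ext^1$ either. The division-ring hypothesis on endomorphisms does not rescue this: it controls factorisation through the \emph{same} object, not vanishing of composites through \emph{different} objects. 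In fact, the no-cycle condition you are trying to prove is exactly axiom (H4) of Definition~\ref{Def:BRT}, and showing that SMCs satisfy (H4) is precisely the nontrivial content of Theorem~\ref{Thm:appendixA}; the paper does not know a direct argument for it.

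The paper therefore takes a completely different route for (2): it uses the Koenig--Yang bijection (Theorem~\ref{Thm:KYbij}) to pass from the given SMC $S$ to its associated silting object $P$, applies part (1) to reorder $P$ into a silting-exceptional sequence, then applies the mutation $\mu^{+}_{\rm rev}$ (Theorem~\ref{Thm:appendix2}) to obtain an SMC-exceptional sequence, and finally invokes Lemma~\ref{Lem:appendix}(2) to identify this sequence with the simple objects of the heart determined by $P$ --- which is $S$ again. The acyclicity question is thus bypassed entirely by transporting the problem to the silting side, where it is already known.
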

\begin{proof}
(1) See \cite[Proposition 3.11]{AI}.

(2) Let $S=\{S_{1} , \dots,  S_{n}\}\in \SMC\D^{\rm b}(H)$. By Theorem \ref{Thm:KYbij}, there exists a silting object $P=\bop_{i=1}^{n}P_{i}$, such that $S$ is the set of simple objects in the heart of $t$-structure $\D^{\rm b}(H)=P[<\hs0]^{\perp}\perp P[\ge \hs 0]^{\perp}$. By (1), we may reorder $(P_{1}, \dots, P_{n})$ into a silting-exceptional sequence $(P_{l_{1}}, \dots, P_{l_{n}})$. Then by Theorem \ref{Thm:appendix2}, $(P_{l_{n}}^{\vee}, \dots, P_{l_{1}}^{\vee})=\mu^{+}_{\rev}(P_{l_{1}}, \dots, P_{l_{n}})$ is a SMC-exceptional sequence. Moreover, it also gives a set of simple objects in the heart of $\D^{\rm b}(H)=P[<\hs0]^{\perp}\perp P[\ge \hs 0]^{\perp}$ by Lemma \ref{Lem:appendix}(2). Therefore, $(P_{l_{n}}^{\vee}, \dots, P_{l_{1}}^{\vee})$ can be obtained by reordering $S$.
\end{proof}

\begin{proof}[Proof of Theorem \ref{Thm:appendixA}]
%Let $X=\bop_{i=1}^{n}$ be a $\Hom_{\le 0}$-configuration of $\D^{\rm b}(H)$. To show $\{X_{1}, \dots, X_{n}\}$ is a SMC, it suffices to show $\D^{\rm b}(H)=\thick(X)$ by Definitions  \ref{Def:SMC} and \ref{Def:BRT}. Since $\mod H=\Filt(X)$ by \cite[Lemma 3]{CB} and $\D^{\rm b}(H)=\thick(\mod H)$, so $\D^{\rm b}(H)=\thick(X)$ holds. Therefore, any $\Hom_{\le 0}$-configuration of $\D^{\rm b}(H)$ is a SMC.
Notice that $\mu_{\rev}^{+}$ induces bijections
\[\xymatrix{ {\cal A:=}\{\mbox{SMC-exceptional sequences of $\D^{\rm b}(H)$}\} \\
\{\mbox{silting-exceptional sequences of $\D^{\rm b}(H)$}\} \ar[d]^{\mbox{\cite[Theorem 2.4]{BRT}}}_{\mu_{\rev}^{+}} \ar[u]^{\mu_{\rev}^{+}}_{\mbox{Theorem \ref{Thm:appendix2}}}
\\
 {\cal B:=}\{\mbox{exceptional sequences of $\D^{\rm b}(H)$ which are $\Hom_{\le 0}$-configurations}\}.
}\]
Then $\cal A=\cal B$. Thus, by Proposition \ref{Prop:appendix}(2) (resp. \cite[Lemma 2.3]{BRT}), any SMC (resp. $\Hom_{\le 0}$-configuration) of $\D^{\rm b}(H)$ is given by an object in $\cal A$ (resp. $\cal B$) by forgetting the order.
So the assertion is true.
\end{proof}
% Move to introduction
%Notice that, in non-hereditary case,  $SMCs$ are usually not necessarily $\Hom_{\le 0}$-configurations. 

\appendix
\section{Silting-$t$-structure correspondence via exceptional mutation}

%To show Theorem \ref{Thm:appendix1}, we need prepare some results on exceptional sequences.
%In this appendix, we show that $\rm{Hom}_{\le 0}$-configurations are exactly SMCs.
In this appendix, we study the correspondence between silting objects and SMCs by exceptional mutation \cite{CB, GR}. Throughout this section, $k$ is a field.
Let $\T$ be a $k$-linear triangulated category such that $\bop_{n\in\Z}\Hom_{\T}(X, Y[n])$ is finite-dimensional for any $X, Y\in \T$. An object $X\in \T$ is \emph{exceptional} if $\End_{\T}(X)$
is a division ring and $\Hom_{\T}(X, X[i])=0$ for any $i\not=0$. We call a sequence $(X_{1}, \dots, X_{n})$ of exceptional objects of $\T$ an \emph{exceptional sequence} if $\Hom_{\T}(X_{i}, X_{j}[l])=0$ for any $1\le j < i\le n$ and any $l \in \Z$. An exceptional sequence $(X_{1},\dots, X_{n})$ is \emph{full} if $\thick_{\T}(X_{1}\op\cdots\op X_{n})=\T$. If $\bop_{i=1}^{n}X_{i}$ is a silting object (resp. SMC) of $\T$, we say the exceptional sequence $(X_{1}, \dots, X_{n})$ is  \emph{silting-exceptional} (resp. \emph{SMC-exceptional}).

Let $(X, Y)$ be an exceptional sequence of $\T$. Consider the following two triangles
\[
Y'[-1] \ra \bop_{i\in \Z}\Hom_{\T}(X[i], Y)\ot_{k} X[i]\ra Y \ra Y', 
\]
\[ X'\ra  X \ra \bop_{\j\in\Z}D\Hom_{\T}(X, Y[j]) \ot_{k}Y[j] \ra X'[1].
\]
We denote by $\mu^{+}(X, Y):=(Y', X)$ and $\mu^{-}(X, Y):=(Y, X')$ the \emph{left and right mutations of $(X, Y)$} respectively.
For an exceptional sequence $(X_{1}, \dots, X_{n})$ and $1\le i\le n-1$, we define 
\[\mu_{i}^{+}(X_{1},\dots, X_{n}):=(X_{1}, \dots, X_{i-1}, \mu^{+}(X_{i}, X_{i+1}), X_{i+2}, \dots, X_{n}),\]
\[\mu_{i}^{-}(X_{1},\dots, X_{n}):=(X_{1}, \dots, X_{i-1}, \mu^{-}(X_{i}, X_{i+1}), X_{i+2}, \dots, X_{n}).\]
Then $\mu_{i}^{+}(X_{1},\dots, X_{n})$ and $\mu_{i}^{-}(X_{1},\dots, X_{n})$ are also exceptional sequences.  Moreover, $\mu_{i}^{+}$ and $\mu^{-}_{i}$ are mutual inverses, and they satisfy the braid relations.
Let 
\[ \mu^{+}_{\rm rev}:=\mu_{1}^{+}(\mu_{2}^{+}\mu_{1}^{+})\cdots(\mu_{n-1}^{+}\cdots\mu_{1}^{+}) \ \mbox{ and } \ \mu^{-}_{\rm rev}:=\mu_{1}^{-}(\mu_{2}^{-}\mu_{1}^{-})\cdots(\mu_{n-1}^{-}\cdots\mu_{1}^{-}). \]

Let $\fullexc \T$ (resp. $\siltexc \T$, $\SMCexc\T$) be the set of 
isomorphism classes of full exceptional (resp. silting-exceptional, SMC-exceptional) sequences in $\T$.

\begin{Thm}\label{Thm:appendix2}
%Now we assume $\T$ has a full exceptional sequence of length $n$. 
$\mu^{\pm}_{\rm rev}$ gives bijections
\[\siltexc\T \xra{1:1} \SMCexc\T \ \text{  and  }  \ \SMCexc\T \xra{1:1} \siltexc\T. \]
%Let $(P_{1}, \dots, P_{n})$ be a silting-exceptional sequence of $\T$.  Then $(P_{n}^{*},\dots, P_{1}^{*}):=\mu(P_{1}, \dots, P_{n})$ is a SMC-exceptional sequence of $\T$.
\end{Thm}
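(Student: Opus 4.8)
\textbf{Proof proposal for Theorem \ref{Thm:appendix2}.}

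The plan is to show that $\mu^{+}_{\rev}$ sends a silting-exceptional sequence to an SMC-exceptional sequence and that $\mu^{-}_{\rev}$ sends an SMC-exceptional sequence to a silting-exceptional sequence; since $\mu^{+}_{\rev}$ and $\mu^{-}_{\rev}$ are mutually inverse on $\fullexc\T$ (they are built out of the mutually inverse $\mu^{\pm}_{i}$, using the braid relations to see $\mu^{-}_{\rev}\mu^{+}_{\rev}=\id$), this immediately gives the two claimed bijections. So the real content is a single implication in each direction, and by duality (replacing $\T$ by $\T^{\mathrm{op}}$, which interchanges silting and SMC-exceptional sequences and swaps $\mu^{+}$ with $\mu^{-}$) it suffices to prove one of them: \emph{if $(P_{1},\dots,P_{n})$ is silting-exceptional, then $\mu^{+}_{\rev}(P_{1},\dots,P_{n})=(P_{n}^{\vee},\dots,P_{1}^{\vee})$ is SMC-exceptional.}

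First I would set up the combinatorics of $\mu^{+}_{\rev}$ acting on a full exceptional sequence $(X_{1},\dots,X_{n})$: unwinding the definition, $\mu^{+}_{\rev}$ moves $X_{1}$ past all the others and mutates it through each, so the output is an exceptional sequence $(X_{n}^{\vee},\dots,X_{1}^{\vee})$ whose last term $X_{1}^{\vee}$ is a shift/twist of $X_{1}$ built via the triangles defining $\mu^{+}$, and whose first term is $X_{n}^{\vee}=X_{n}$. The key structural fact to extract is that $\thick(X_{1}^{\vee},\dots,X_{j}^{\vee})=\thick(X_{j},\dots,X_{n})$ for each $j$ — equivalently, $X_{i}^{\vee}\in\thick(X_{i},\dots,X_{n})$ and, conversely, $X_{i}\in\thick(X_{1}^{\vee},\dots,X_{i}^{\vee})$ — which follows by induction from the mutation triangles. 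In particular $\thick(X_{1}^{\vee}\op\cdots\op X_{n}^{\vee})=\T$, so fullness is automatic.

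Next, with $X_{i}=P_{i}$ a silting-exceptional sequence, I would translate ``$\bop P_{i}$ is silting'' into the bounded co-$t$-structure $\T=\T^{P}_{\ge0}\perp\T^{P}_{<0}$ of \eqref{**}, and aim to prove that $\{P_{1}^{\vee},\dots,P_{n}^{\vee}\}$ is exactly the set of simple objects in the heart $\H_{P}=\T^{\le0}_{P}\cap\T^{\ge0}_{P}$ of the associated silting $t$-structure \eqref{*}; by Theorem \ref{Thm:KYbij} this set is an SMC, and since $(P_{n}^{\vee},\dots,P_{1}^{\vee})$ is already known to be an exceptional sequence this gives that it is SMC-exceptional. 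To identify $P_{i}^{\vee}$ with the $i$-th simple: one shows by descending induction on $i$ that $P_{i}^{\vee}$ is the simple top (in $\H_{P}$) corresponding to the summand $P_{i}$, using that $P_{i}^{\vee}\in\thick(P_{i},\dots,P_{n})$ together with the orthogonality relations $\Hom_{\T}(P_{i}^{\vee},P_{j}^{\vee}[l])=0$ read off from the exceptional sequence, plus the (co-)$t$-structure inequalities $\Hom_{\T}(P_{i},P_{j}[>0])=0$. This is the step I expect to be the main obstacle: carefully matching the homological position of the mutated object $P_{i}^{\vee}$ with the location of the heart $\H_{P}$, i.e.\ proving $P_{i}^{\vee}\in\H_{P}$ and that it is simple there (no proper sub/quotient in $\H_{P}$), rather than just lying in some shift of it. I would handle it by an explicit cohomological amplitude estimate: the triangles defining $\mu^{+}$ only mix $P_{i}$ with objects $P_{j}$ ($j>i$) that are $\Hom_{\le0}$-orthogonal to it, so the twist keeps $P_{i}^{\vee}$ inside $P[<0]^{\perp}\cap P[\ge0]^{\perp}=\H_{P}$, and simplicity follows because any sub-object in $\H_{P}$ would violate one of the exceptional-sequence vanishings. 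Finally, I would record the auxiliary Lemma \ref{Lem:appendix}(2) (already invoked in Proposition \ref{Prop:appendix}) stating precisely that $\mu^{+}_{\rev}$ of a silting-exceptional sequence yields the simples of $\H_{P}$, which packages exactly this identification.
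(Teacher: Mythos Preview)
Your proposal has two genuine problems.

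\textbf{The combinatorics of $\mu^{+}_{\rev}$ is reversed.} You write that $X_{n}^{\vee}=X_{n}$ and $X_{i}^{\vee}\in\thick(X_{i},\dots,X_{n})$, and later that the mutation triangles mix $P_{i}$ with objects $P_{j}$ for $j>i$. This is backwards. Unwinding $\mu^{+}_{\rev}$ (as the paper does in Lemma~\ref{Lem:appendix}(1)) shows $X_{1}^{\vee}=X_{1}$, and more generally
\[
X_{i}^{\vee}\in X_{i}\ast\add(X_{1}^{\vee}[\Z])\ast\cdots\ast\add(X_{i-1}^{\vee}[\Z])\subset X_{i}\ast\Filt(X_{1}[\Z],\dots,X_{i-1}[\Z]),
\]
so $X_{i}^{\vee}$ is built from $X_{1},\dots,X_{i}$, not from $X_{i},\dots,X_{n}$. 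Every amplitude estimate you sketch has the indices the wrong way round, so the argument as written does not go through.

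\textbf{The heart $\H_{P}$ need not exist.} Your core strategy is to identify the $P_{i}^{\vee}$ with the simple objects of the silting heart $\H_{P}$ and then invoke Theorem~\ref{Thm:KYbij}. But Theorem~\ref{Thm:appendix2} is stated for an arbitrary $k$-linear triangulated category with finite-dimensional graded $\Hom$-spaces, with no hypothesis that Assumption~\ref{Assu} holds, and Theorem~\ref{Thm:KYbij} is only for $\D^{\rm b}(A)$. So there may be no $t$-structure $\T^{\le0}_{P}\perp\T^{>0}_{P}$ to work with. (Your duality reduction is also off: passing to $\T^{\rm op}$ and reversing the sequence preserves silting-exceptional and SMC-exceptional separately; it does not interchange them. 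What it does swap is $\mu^{+}_{\rev}$ with $\mu^{-}_{\rev}$.)

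The paper bypasses both issues by working purely with the mutation triangles. The key input is Lemma~\ref{Lem:appendix}: part~(2) gives the ``dual basis'' relation $\Hom_{\T}(X_{s},X_{t}^{\vee}[m])=\delta_{st}\delta_{m0}\cdot(\text{division ring})$, and part~(3) shows that when $(P_{1},\dots,P_{n})$ is silting-exceptional one has the sharper inclusion $P_{i}^{\vee}\in P_{i}\ast\Filt(P_{1}[>\!0],\dots,P_{i-1}[>\!0])$. Combining these directly yields $\Hom_{\T}(P_{s}^{\vee},P_{t}^{\vee}[\le0])=0$ for $s\neq t$, which is exactly the missing SMC condition; the inverse implication is handled the same way, and the second bijection follows by the $\T\leftrightarrow\T^{\rm op}$ duality. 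No $t$-structure is needed.
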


\begin{Rem}
Theorem \ref{Thm:appendix2} is given in  \cite[Theorem 4.6]{BRT} for the derived category $\D^{\rm b}(H)$ of a hereditary algebra $H$, where they considered $\Hom_{\le 0}$-configurations instead of SMCs. In Proposition \ref{Thm:appendixA}, we show $\Hom_{\le 0}$-configurations are precisely SMCs in the derived category of hereditary algebras by using Theorem \ref{Thm:appendix2}.
\end{Rem}

To prove Theorem \ref{Thm:appendix2},
the following observation is useful.
\begin{Lem} \label{Lem:appendix}
Let $(X_{1}, \dots, X_{n})$ be an exceptional sequence and  $(X_{n}^{\vee}, \dots, X_{1}^{\vee}):=\mu^{+}_{\rev}(X_{1}, \dots, X_{n})$. Then 
\begin{enumerate}[\rm(1)]
\item $X_{i}^{\vee}\in X_{i}\ast \add(X_{1}^{\vee}[\Z])\ast\cdots \ast \add (X_{i-1}^{\vee}[\Z])\subset X_{i}\ast \Filt (X_{1}[\Z],\dots,X_{i-1}[\Z])$.
%$X_{1}^{*}=X_{1}$ and  for $1< i \le n$, we have $X_{i}^{*}\in X_{i} \ast \thick_{\T}(X_{1}^{*})\ast \cdots \ast \thick_{\T}(X_{i-1}^{*})$.

\item For any $1\le s, t\le n$ and $m\in \Z$, we have 
\begin{equation} \label{appendix:KY}
\Hom_{\T}(X_{s}, X_{t}^{\vee}[m])=\begin{cases} \mbox{division ring,}  & \text{if $s=t$ and $n=0.$}  \\ 0, & \text{otherwise.}
\end{cases} 
\end{equation}

\item If $(X_{1},\dots, X_{n})$ is a silting-exceptional sequence, then for each $1\le i\le n$, we have 
%$\Hom_{\T}(\bop_{l=1}^{n}X_{l}, (\bop_{l=1}^{n}X_{l})[>\hs 0])=0$, then for $1<i\le n$, 
\[ X_{i}^{\vee}\in X_{i} \ast \add(X_{1}^{\vee}[>\hs 0])\ast \cdots \ast \add(X_{i-1}^{\vee}[>\hs 0])\subset X_{i}\ast\Filt(X_{1}[>\hs0], \dots, X_{i-1}[>\hs 0]) .  \]

\end{enumerate}
\end{Lem}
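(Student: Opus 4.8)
The plan is to establish (1)--(3) simultaneously by induction on $n$, the base case $n=1$ being trivial since then $X_{1}^{\vee}=X_{1}$. The first task is to record a recursion for $\mu^{+}_{\rev}$. A direct inspection shows that the composite $\mu_{n-1}^{+}\cdots\mu_{1}^{+}$ (with $\mu_{1}^{+}$ applied first) carries $(X_{1},\dots,X_{n})$ to $(Y_{2},\dots,Y_{n},X_{1})$, where $Y_{j}$ is the object appearing in the left mutation $\mu^{+}(X_{1},X_{j})=(Y_{j},X_{1})$; in particular $(Y_{j},X_{1})$ is exceptional, so $\Hom_{\T}(X_{1},Y_{j}[\Z])=0$ and $Y_{j}\in X_{j}\ast\add(X_{1}[\Z])$, and if moreover $\bigoplus_{i}X_{i}$ is silting then the defining triangle of $Y_{j}$ only involves $X_{1}[i]$ with $i\ge 0$, whence $Y_{j}\in X_{j}\ast\add(X_{1}[>\hs 0])$. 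The remaining factor $\mu_{1}^{+}(\mu_{2}^{+}\mu_{1}^{+})\cdots(\mu_{n-2}^{+}\cdots\mu_{1}^{+})$ leaves the last entry untouched and acts on $(Y_{2},\dots,Y_{n})$ exactly as $\mu^{+}_{\rev}$ for length $n-1$, viewed inside the triangulated category $\thick_{\T}(Y_{2}\op\cdots\op Y_{n})$. Therefore $(X_{n}^{\vee},\dots,X_{1}^{\vee})=(Y_{n}^{\vee},\dots,Y_{2}^{\vee},X_{1})$ with $(Y_{n}^{\vee},\dots,Y_{2}^{\vee})=\mu^{+}_{\rev}(Y_{2},\dots,Y_{n})$; in particular $X_{1}^{\vee}=X_{1}$, and $X_{t}^{\vee}=Y_{t}^{\vee}$ for $t\ge 2$.

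For (1), apply the inductive hypothesis to the exceptional sequence $(Y_{2},\dots,Y_{n})$ to get $X_{t}^{\vee}=Y_{t}^{\vee}\in Y_{t}\ast\add(X_{2}^{\vee}[\Z])\ast\cdots\ast\add(X_{t-1}^{\vee}[\Z])$; composing with $Y_{t}\in X_{t}\ast\add(X_{1}^{\vee}[\Z])$ and using associativity of $\ast$ gives the first inclusion, while a short side induction showing $X_{j}^{\vee}\in\Filt(X_{1}[\Z],\dots,X_{j}[\Z])$ (using $\Filt\ast\Filt\subset\Filt$ and shift-closedness) yields the inclusion into $X_{t}\ast\Filt(X_{1}[\Z],\dots,X_{t-1}[\Z])$. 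For (2), I induct again via the recursion: when $t=1$ the statement concerns $\Hom_{\T}(X_{s},X_{1}[m])$ and follows from exceptionality of $(X_{1},\dots,X_{n})$ together with the fact that $\End_{\T}(X_{1})$ is a division ring; when $t\ge 2$ and $s=1$ it follows from part (1) for $(Y_{2},\dots,Y_{n})$ together with $\Hom_{\T}(X_{1},Y_{j}[\Z])=0$; and when $t\ge 2$, $s\ge 2$, one applies $\Hom_{\T}(-,X_{t}^{\vee}[m])$ to the triangle $X_{s}\to Y_{s}\to A_{s}\to X_{s}[1]$ with $A_{s}\in\add(X_{1}[\Z])$, observes $\Hom_{\T}(A_{s},X_{t}^{\vee}[\Z])=0$ by the case $s=1$, hence $\Hom_{\T}(X_{s},X_{t}^{\vee}[m])\cong\Hom_{\T}(Y_{s},X_{t}^{\vee}[m])$, which is then evaluated by the inductive hypothesis for $(Y_{2},\dots,Y_{n})$.

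The point requiring real care is (3): the inductive hypothesis of (3) becomes available only once we know $(Y_{2},\dots,Y_{n})$ is \emph{silting}-exceptional in $\thick_{\T}(Y_{2}\op\cdots\op Y_{n})$, not merely exceptional. Fullness being automatic, the content is $\Hom_{\T}(Y_{s},Y_{t}[>\hs 0])=0$ for $2\le s,t\le n$: using $\Hom_{\T}(X_{1},Y_{t}[\Z])=0$ and the triangle $X_{s}\to Y_{s}\to A_{s}\to X_{s}[1]$ with $A_{s}\in\add(X_{1}[>\hs 0])$ reduces this to $\Hom_{\T}(X_{s},Y_{t}[>\hs 0])$, and then the triangle $X_{t}\to Y_{t}\to A_{t}\to X_{t}[1]$ with $A_{t}\in\add(X_{1}[>\hs 0])$ reduces it to $\Hom_{\T}(X_{s},X_{t}[>\hs 0])=0=\Hom_{\T}(X_{s},X_{1}[>\hs 0])$, both of which hold because $\bigoplus_{i}X_{i}$ is silting. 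With this established, the inductive hypothesis of (3) applied to $(Y_{2},\dots,Y_{n})$ gives $X_{t}^{\vee}=Y_{t}^{\vee}\in Y_{t}\ast\add(X_{2}^{\vee}[>\hs 0])\ast\cdots\ast\add(X_{t-1}^{\vee}[>\hs 0])$, and composing with $Y_{t}\in X_{t}\ast\add(X_{1}^{\vee}[>\hs 0])$ and associativity of $\ast$ finishes part (3), the $\Filt$-inclusion being obtained as in (1). The only genuine obstacles here are verifying the recursion for $\mu^{+}_{\rev}$ in the first paragraph and the descent of the silting property to $(Y_{2},\dots,Y_{n})$; everything else is routine manipulation of octahedra and of the operations $\ast,\add,\Filt$.
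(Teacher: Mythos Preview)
Your argument is correct, but it is organized differently from the paper's. Both proofs rest on the same factorization $\mu^{+}_{\rev}=\mu^{(n-1)}\cdots\mu^{(1)}$ with $\mu^{(i)}=\mu_{n-i}^{+}\cdots\mu_{1}^{+}$; the difference is how the induction is structured. The paper proceeds \emph{iteratively}: it introduces the intermediate sequences $(X_{i+1}^{(i)},\dots,X_{n}^{(i)},X_{i}^{(i)},\dots,X_{1}^{(i)})$ obtained after applying $\mu^{(i)}\cdots\mu^{(1)}$, and for (1) and (3) chains the one-step containments $X_{t}^{(i)}\in X_{t}^{(i-1)}\ast\add(X_{i}^{(i-1)}[\Z])$ (respectively $[>\hs0]$), while for (2) it splits into the cases $s\ge t$ and $s<t$, in the second case expressing $X_{s}$ in terms of $X_{1}^{\vee},\dots,X_{s}^{\vee}$ and using that $(X_{n}^{\vee},\dots,X_{1}^{\vee})$ is exceptional. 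You instead run a single \emph{recursive} induction on $n$: after one pass of $\mu^{(1)}$ you land on $(Y_{2},\dots,Y_{n},X_{1})$ and invoke the full statement for the shorter sequence $(Y_{2},\dots,Y_{n})$. This is conceptually cleaner and treats (2) uniformly, but it trades the paper's step-by-step vanishing $\Hom_{\T}(X_{i}^{(i-1)}[\le 0],X_{t}^{(i-1)})=0$ for the global fact that $(Y_{2},\dots,Y_{n})$ is again silting-exceptional in $\thick_{\T}(Y_{2}\op\cdots\op Y_{n})$, which you verify directly. Either bookkeeping device suffices; your route has the advantage that the inductive hypothesis is exactly the lemma itself, at the cost of this one extra descent lemma for the silting property.
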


\begin{proof}
(1) Let $\mu^{(i)}:=\mu_{n-i}^{+}\cdots\mu_{1}^{+}$. Then $\mu_{\rev}^{+}=\mu^{(n-1)}\cdots\mu^{(1)}$. Let $X_{i}^{(0)}:=X_{i}$, for any $1\le i\le n$, and $(X_{i+1}^{(i)}, \dots, X_{n}^{(i)}, X_{i}^{(i)},\dots, X_{1}^{(i)}):=\mu^{(i)}\cdots\mu^{(1)}(X_{1},\dots, X_{n})$.  
Since \[(X_{i+1}^{(i)}, \dots, X_{n}^{(i)}, X_{i}^{(i)},\dots, X_{1}^{(i)})=\mu^{(i)}(X_{i}^{(i-1)}, \dots, X_{n}^{(i-1)}, X_{i-1}^{(i-1)},\dots, X_{1}^{(i-1)}),\] then we have $X_{j}^{(i)}=X_{j}^{(i-1)}$ for $1\le j\le i$,
and moreover, for any $i+1\le t\le n$, we have 
\begin{equation}\label{equation:Lem}
X_{t}^{(i)}\in X_{t}^{(i-1)}\ast \add (X_{i}^{(i-1)}[\Z]).\end{equation}
Therefore,
\begin{align*} X_{i}^{\vee}=X_{i}^{(n-1)}=\cdots=X_{i}^{(i-1)}  \in   X_{i}^{(i-2)}\ast \add(X_{i-1}^{\vee}[\Z]) &  \subset X_{i}^{(i-3)}\ast \add(X_{i-2}^{\vee}[\Z])\ast \add (X_{i-1}^{\vee}[\Z])  \\
 & \subset \cdots \subset X_{i}\ast \add(X_{1}^{\vee}[\Z])\ast \cdots \ast \add(X_{i-1}^{\vee}[\Z]).\end{align*}
Inductively,  we  have $ X_{i}\ast \add(X_{1}^{\vee}[\Z])\ast \cdots \ast \add(X_{i-1}^{\vee}[\Z])\subset X_{i}\ast \Filt(X_{1}[\Z], \dots, X_{i-1}[\Z])$. So (1) holds. 

(2)  Assume $s\ge t$. Since $X_{t}^{\vee}\in X_{\le t}\ast\Filt(X_{1}[\Z], \cdots, X_{t-1}[\Z])$ by (1) and  $\Hom_{\T}(X_{s}, X_{t}[\Z])=0$,
we obtain
%Note that by (1), we have a triangle 
%%\begin{equation} \label{equ:tri}
%Z[-1]\ra  X_{i} \ra X_{i}^{*} \ra Z 
%\end{equation}
%where $1< i\le n$ and $Z\in \thick_{\T}(X_{1}^{\vee})\ast\cdots \ast \thick_{\T}(X_{i-1}^{\vee})\subset\thick_{\T}(X_{1}, \dots, X_{i-1})$.
%If $s\ge t$, then take $i=t$ in \eqref{equ:tri} and apply $\Hom_{\T}(X_{s}[-m], ?)$,  
\[ \Hom_{\T}(X_{s}, X_{t}^{\vee}[m])=\Hom_{\T}(X_{s}, X_{t}[m])=\begin{cases} \mbox{division ring,}  & \text{if $s=t$ and $m=0.$}  \\ 0, & \text{otherwise.}
\end{cases} \]
Assume $s<t$.  Then $X_{s}\in \add(X_{1}^{\vee}[\Z])\ast \cdots\ast \add(X_{s-1}^{\vee}[\Z])$ by (1). Since $(X_{n}^{\vee},\dots, X_{1}^{\vee})$  is an exceptional sequence, $\Hom_{\T}(X_{\le s}^{\vee}, X_{t}^{\vee}[\Z])=0$ holds, thus  
$\Hom_{\T}(X_{s}, X_{t}^{\vee}[m])=0. $
So the assertion is true.

(3) As in the proof of (1), it is enough to show that  for any $1\le i\le n$, we have  
\begin{equation}\label{equation:LemA2} 
X_{t}^{(i)}\in X_{t}^{(i-1)}\ast \add(X_{i}^{(i-1)}[>\hs 0])   \mbox{ for $i+1\le t\le n$.}
\end{equation} In fact, $X_{t}^{(i)}$ is given by the triangle
\begin{equation*}\label{equation:Lem2}
X_{t}^{(i)}[-1] \ra \bop_{l\in \Z}\Hom_{\T}(X_{i}^{(i-1)}[l], X_{t}^{(i-1)})\ot_{k} X_{i}^{(i-1)}[l]\ra X_{t}^{(i-1)} \ra X_{t}^{(i)}.
\end{equation*}
To prove \eqref{equation:LemA2},  it suffices to show that 
\begin{equation} \label{equation:Lem3}
\Hom_{\T}(X_{i}^{(i-1)}[l], X_{t}^{(i-1)})=0 \mbox{ for $l\le 0$.}
\end{equation}
This is clearly  for $i=1$. Now we assume \eqref{equation:LemA2} is true for $1, 2, \dots, i-1$.
Since $X_{i}^{(i-1)}=X_{i}^{\vee}\in X_{i}\ast \add(X_{1}^{\vee}[\Z])\ast\cdots\ast\add(X_{i-1}^{\vee}[\Z])$ by (1), and $(X_{i}^{(i-1)}, \dots, X_{n}^{(i-1)}, X_{i-1}^{(i-1)},\dots, X_{1}^{(i-1)})$ is an exceptional sequence, then
 $\Hom_{\T}(X_{j}^{\vee}[\Z], X_{t}^{(i-1)})=\Hom_{\T}(X_{j}^{(i-1)}[\Z], X_{t}^{(i-1)})=0$  for any $1\le j\le i-1$.
Therefore,  we have 
\[\Hom_{\T}(X_{i}^{(i-1)}[l], X_{t}^{(i-1)})=\Hom_{\T}(X_{i}[l], X_{t}^{(i-1)}).  \]
It is zero for $l<0$, because  we know $X_{t}^{(i-1)}\in X_{t}\ast\Filt(X_{1}[>\hs0],\dots,X_{i-1}[>\hs 0])$ by our assumption that \eqref{equation:LemA2} is true for $1,2,\dots, i-1$. 
Thus \eqref{equation:Lem3} holds.
%So we can take $l\ge 0$ in \eqref{equation:Lem2} 
%(4) is similar to (3), where we should replace  \eqref{equation:Lem} by $X_{t}^{(i)}\in X_{t}^{(i-1)}\ast \add(X_{i}^{(i-1)}[\le \hs 0])$. 
\end{proof}

Now we prove Theorem \ref{Thm:appendix2}.
\begin{proof}[Proof of Theorem \ref{Thm:appendix2}]
For any full exceptional sequence $(X_{1},\dots, X_{n})$, we know $(X_{n}^{\vee},\dots, X_{1}^{\vee}):=\mu(X_{1},\dots, X_{n})$ is also a full exceptional sequence. Therefore we have $\thick_{\T}(X_{1}^{\vee}, \dots, X_{n}^{\vee})=\T$ and $\Hom_{\T}(X_{i}^{\vee}, X_{i}^{\vee}[\not=0])=0$.

Now we show the first part.  Fix $(P_{1}, \dots, P_{n})\in \siltexc\T$. To show $(P_{n}^{\vee}, \dots, P_{1}^{\vee})\in \SMCexc\T$, it remains to show $\Hom_{\T}(P_{s}^{\vee}, P_{t}^{\vee}[\le \hs 0])=0$ for each $s\not=t$.
 By Lemma \ref{Lem:appendix}(3), we have
 $P_{s}^{\vee}\in P_{s}\ast\Filt(P_{1}[>\hs0], \dots, P_{s-1}[>\hs 0])$. By Lemma \ref{Lem:appendix}(2), $\Hom_{\T}(P_{s}, P_{t}^{\vee}[\le \hs 0])=0$ and $\Hom_{\T}(P_{s'}[	>\hs 0], P_{t}^{\vee}[\le \hs 0])=0$ for each $s'$. So $(P_{n}^{\vee}, \dots, P_{1}^{\vee})\in \SMCexc\T$.
 
 On the other hand, for any full exceptional sequence $(X_{1}, \dots, X_{n})$ with $(X_{n}^{\vee},\dots, X_{1}^{\vee})\in \SMCexc\T$, we show $(X_{1}, \dots, X_{n})\in \siltexc\T$. 
 By Lemma \ref{Lem:appendix}(3), we have $X_{t}\in \add(X_{1}^{\vee}[\ge\hs0])\ast\cdots\add(X_{t-1}^{\vee}[\ge \hs 0])\ast X_{t}^{\vee}$  and $\Hom_{\T}(X_{s}, X_{t'}^{\vee}[>\hs0])=0$ for any $s,t'$. Then $\Hom_{\T}(X_{s}, X_{t}[>\hs0])=0$ and $(X_{1}, \dots, X_{n})\in \siltexc\T$. 
 Thus the assertion follows.
 
We have a bijection $\iota: \fullexc \T \xra{1:1} \fullexc \T^{\rm op}$ given by $\iota(X_{1},\dots, X_{n})=(X_{n}, \dots, X_{1})$, which gives bijections  $\iota: \siltexc\T \xra{1:1} \siltexc \T^{\rm op}$ and $\iota: \SMCexc\T \xra{1:1} \SMCexc \T^{\rm op}$.
 Since  $\iota\circ \mu_{i}^{\pm}\circ \iota=\mu_{i}^{\mp}$ and hence, $\iota\circ \mu_{\rev}^{\pm}\circ \iota=\mu_{\rev}^{\mp}$ holds,  then the second claim follows from the first one.
%  a triangle 
% \[ Q[-1] \ra P_{s} \ra P_{s}^{*} \ra Q,  \]
% where $Q\in \add(P_{1}^{*}[>\hs 0])\ast\cdots\ast\add(P_{s-1}^{*}[>\hs 0])\subset \Filt(\add P_{1}[>\hs 0], \dots, \Filt(\add P_{s-1}[>\hs 0]))$. Then we have $\Hom_{\T}(P_{s}^{*}, P_{t}^{*}[\le \hs 0])=0$, by applying $\Hom_{\T}(?, P_{t}^{*}[\le \hs 0])$ to this triangle and by Lemma \ref{Lem:appendix}(3).
\end{proof}

\begin{Rem}
Similarly to Lemma \ref{Lem:appendix}(3), we can show that, 
 if $(X_{1},\dots, X_{n})$ is a SMC-exceptional sequence, then for each $1\le i\le n$, we have 
\[ X_{i}^{\vee}\in X_{i} \ast \add(X_{1}^{\vee}[\le\hs 0])\ast \cdots \ast \add(X_{i-1}^{\vee}[\le\hs 0])\subset X_{i}\ast\Filt(X_{1}[\le\hs 0], \dots, X_{i-1}[\le\hs 0]).\]
The second part of Theorem \ref{Thm:appendix2} can be shown directly by this result.

% Fix $(S_{1}, \dots, S_{n})\in \SMCexc\T$.  We claim $\Hom_{\T}(S_{s}^{\vee}, S_{t}^{\vee}[>\hs0])=0$ for each $s$ and $t$. By  Lemma \ref{Lem:appendix}(4), we have $S_{s}^{\vee}\in S_{s}\ast \Filt(S_{1}[\le \hs0], \dots, S_{s-1}[\le \hs0])$. By   Lemma \ref{Lem:appendix}(2), $\Hom_{\T}(S_{s'}[\le\hs0], S_{t}^{\vee}[>0])=0$ holds for each $s'$ and $t$. Thus the claim holds and moreover, $(S_{n}^{\vee}, \dots, S_{1}^{\vee})\in \siltexc$.
% On the other hand, for any full exceptional sequence $(S_{1}, \dots, S_{n})$ such that $(S_{n}^{\vee}, \dots, S_{1}^{\vee})\in \siltexc\T$. We show $(S_{1}, \dots, S_{n})\in \SMCexc\T$.
% By   Lemma \ref{Lem:appendix}, we know $S_{t}\in \add(S_{1}^{\vee}[<\hs0])\ast\cdots\ast\add(S_{t-1}^{\vee}[<\hs0])\ast S_{t}^{\vee}$ and $\Hom_{\T}(S_{s},S_{t'}[\le \hs0])=0$ for any $s\not =t'$, thus $\Hom_{\T}(S_{s},S_{t}[\le \hs 0])=0$ for $s\not=t$ and moreover, $(S_{1}, \dots, S_{n})\in \SMCexc\T$.
\end{Rem}

We give an application of Theorem \ref{Thm:appendix2}.

\begin{Cor}
Let $A$ be a proper dg $k$-algebra. If $\per A$ has a full exceptional sequence, then
 $\per A$  has silting-exceptional and SMC-exceptional  sequences. Moreover, we have $\per A=\D^{\rm b}(A)$.
\end{Cor}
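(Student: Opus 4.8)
The plan is to deduce everything from Theorem \ref{Thm:appendix2} together with a standard fact about proper dg algebras. Suppose $\per A$ has a full exceptional sequence $(X_1,\dots,X_n)$. Since $A$ is a proper dg $k$-algebra, $\bigoplus_{m\in\Z}\Hom_{\per A}(X,Y[m])$ is finite-dimensional for all $X,Y\in\per A$, so the hypotheses under which Theorem \ref{Thm:appendix2} is proved are satisfied with $\T=\per A$. The first step is simply to observe that the full exceptional sequence need not be silting- or SMC-exceptional a priori; however, $\mu^{+}_{\rev}$ sends it to another full exceptional sequence, and iterating the braid-group action of the $\mu_i^{\pm}$ we can try to reach one of the two distinguished kinds. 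In fact it is cleaner to argue: by Lemma \ref{Lem:appendix}(2) the "right dual" sequence $(X_n^{\vee},\dots,X_1^{\vee})=\mu^{+}_{\rev}(X_1,\dots,X_n)$ already satisfies the orthogonality $\Hom_{\T}(X_s,X_t^{\vee}[m])=0$ for $s\neq t$ or $m\neq 0$; one then checks directly, as in the proof of Theorem \ref{Thm:appendix2}, that $\bigoplus_i X_i^{\vee}$ is in fact simple-minded or, dually, that $\bigoplus_i X_i$ becomes silting after a suitable reordering.

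Concretely, the cleanest route is: (i) start from $(X_1,\dots,X_n)\in\fullexc\per A$; (ii) form $Q:=\bigoplus_{i=1}^n X_i^{\vee}$ where $(X_n^{\vee},\dots,X_1^{\vee}):=\mu^{+}_{\rev}(X_1,\dots,X_n)$; (iii) using the filtration $X_i^{\vee}\in X_i\ast\Filt(X_1[\Z],\dots,X_{i-1}[\Z])$ of Lemma \ref{Lem:appendix}(1) together with $\thick_{\T}(X_1^{\vee},\dots,X_n^{\vee})=\T$, conclude $\thick_\T Q=\T$; and then apply the relevant implication of Theorem \ref{Thm:appendix2} to a reordering that is silting-exceptional (or, if one prefers, run the dual argument to get an SMC-exceptional sequence directly). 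The point is that Theorem \ref{Thm:appendix2} gives a bijection $\siltexc\T\leftrightarrow\SMCexc\T$, so it suffices to produce \emph{one} silting-exceptional sequence in $\per A$; but the existence of that one silting-exceptional sequence is exactly what must be extracted from the bare full exceptional sequence.

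For the last sentence, $\per A=\D^{\bb}(A)$, the plan is: once $\per A$ admits a silting object $P$ (which we now have), Theorem \ref{Thm:KYbij}'s mechanism shows that $P$ induces a bounded $t$-structure on $\D^{\bb}(A)$ with length heart whose simple objects $\bigoplus S_i$ form an SMC; in particular $\D^{\bb}(A)=\thick_{\D^{\bb}(A)}(S_1\oplus\cdots\oplus S_n)$. Since each $S_i$ lies in $\per A$ (being built from $P$ by finitely many triangles in the silting co-$t$-structure $\Filt(P[\le 0])\perp\Filt(P[>0])$), we get $\thick_{\D^{\bb}(A)}(S_1,\dots,S_n)\subseteq\per A$, hence $\D^{\bb}(A)\subseteq\per A$; the reverse inclusion always holds, so $\per A=\D^{\bb}(A)$. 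Alternatively one notes that $\per A$ having a silting object forces $A$ to have finite global dimension in the appropriate derived sense, which is the classical criterion for $\per A=\D^{\bb}(A)$.

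The main obstacle is step (iii) above — verifying that the full exceptional sequence can actually be \emph{reordered} into a silting-exceptional one, rather than merely mutated. Here one must use that in a triangulated category with the finiteness hypothesis, any full exceptional sequence can, after reordering, be arranged so that the self-extension-vanishing conditions $\Hom_\T(X_i,X_j[>0])=0$ hold; this is where Lemma \ref{Lem:appendix}(3) and the braid relations do the real work, exactly paralleling Proposition \ref{Prop:appendix}. Everything else is bookkeeping with the $\ast$-filtrations and the already-established bijections.
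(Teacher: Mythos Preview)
There is a genuine gap in the first part. The step you flag yourself as ``the main obstacle'' is indeed the crux, and your proposed resolution does not work. A full exceptional sequence $(X_1,\dots,X_n)$ cannot in general be \emph{reordered} into a silting-exceptional one: permuting the terms typically destroys exceptionality. Lemma \ref{Lem:appendix}(3) already assumes the sequence is silting-exceptional, so it cannot be used to manufacture that property, and Proposition \ref{Prop:appendix} is specific to hereditary algebras. What the paper actually uses is \cite[Proposition 3.5]{AI}: for any full exceptional sequence one can choose integers $l_1,\dots,l_n$ so that $(X_1[l_1],\dots,X_n[l_n])$ is silting-exceptional. The operation is \emph{shifting}, not reordering or mutating. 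Once that silting-exceptional sequence is in hand, Theorem \ref{Thm:appendix2} immediately yields an SMC-exceptional sequence, and both existence claims follow.

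Your argument for $\per A=\D^{\bb}(A)$ also has a gap. You assert that the simple objects $S_i$ of the heart lie in $\per A$ because they are ``built from $P$ by finitely many triangles in the silting co-$t$-structure'', but this is not how the $S_i$ arise: the co-$t$-structure lives on $\per A$ while the $t$-structure lives on $\D^{\bb}(A)$, and there is no a priori reason the simples of the latter belong to the former. The paper avoids this by working the other way round: the SMC-exceptional sequence $\mu_{\rev}^{+}(X_1[l_1],\dots,X_n[l_n])$ is constructed by exceptional mutation \emph{inside} $\per A$, so it lies in $\per A$ by construction; one then passes to the endomorphism dg algebra $B=\shEnd_A(\bigoplus X_i[l_i])$ and uses Lemma \ref{Lem:appendix}(2) to identify this SMC with the simple objects of the standard heart of $\D^{\bb}(B)$. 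Hence $\D^{\bb}(B)=\thick(\text{SMC})\subset\per B$, and properness gives the reverse inclusion.
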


\begin{proof}
Let $(X_{1}, \dots, X_{n})$ be a full exceptional sequence in $\per A$. Then by \cite[Proposition 3.5]{AI}, we have a silting-exceptional sequence $(X_{1}[l_{1}], \dots, X_{n}[l_{n}])$ by shifting the original one with proper integers $l_{1},\dots, l_{n}$. By Theorem \ref{Thm:appendix2}, we get  a SMC-exceptional sequence $\mu_{\rev}^{+}(X_{1}[l_{1}], \dots, X_{n}[l_{n}])$.  Let  $X=\bop_{i=1}^{n}X_{i}[l_{i}]$. 
Then $B:=\shEnd_{A}(X)$  is a non-positive proper dg algebra. Moreover,  we have  a triangle equivalence $\per B\simeq \per A$ given by $\RshHom_{A}(X, ?)$,  and hence $\D^{\rm b}(B)\simeq \D^{\rm b}(A)$. 

Recall that $\D^{\rm b}(B)$ has a standard $t$-structure $\D^{\rm b}(B)=\D^{\le 0}\perp \D^{> 0}$, which is bounded and the heart is a length category.  By Lemma \ref{Lem:appendix}(2), the simple objects in the heart  are given by $\mu_{\rev}^{+}(X_{1}[l_{1}], \dots, X_{n}[l_{n}])  \subset \per B$. Therefore $\D^{\rm b}(B)=\thick(\mu_{\rev}^{+}(X_{1}[l_{1}], \dots, X_{n}[l_{n}]))\subset \per B$. Since $B$ is proper, we have $\D^{\rm b}(B)=\per B$ as desired.
\end{proof}

%We end this section by the following observation.

%Now we  prove Theorem \ref{Thm:appendix1}.

%Note that $\rm{Hom}_{\le 0}$-configurations %and the  bijection between silting objects and $d$-$\rm{Hom}_{\le 0}$-configurations is similar to the bijection (1.2). 
%

\medskip\noindent
{\bf Acknowledgements }
We thank the referees for their useful comments and suggestions, which improved the writing of our paper. 

%%%%%%%%%%%%%%%%%%%%%%%%%

\end{document}